\newtheorem{thm}{Theorem}[section]
 \newtheorem{cor}{Corollary}[section]
 \newtheorem{lem}{Lemma}[section]
 \newtheorem{prop}{Proposition}[section]
 \newtheorem{defn}{Definition}[section]%是否是全文计数？
\newtheorem{rem}{Remark}[section]
\newtheorem{claim}{Claim}[section]
\begin{document}
\begin{center}
{\large{\bf The optimal decay estimates for the Euler-Poisson two-fluid system}}
\end{center}
\begin{center}
\footnotesize{Jiang Xu}\\[2ex]
\footnotesize{Department of Mathematics, \\ Nanjing
University of Aeronautics and Astronautics, \\
Nanjing 211106, P.R.China,}\\
\footnotesize{jiangxu\underline{ }79@nuaa.edu.cn}\\

\vspace{3mm}

\footnotesize{Faculty of Mathematics, \\ Kyushu University, Fukuoka 812-8581, Japan}\\

\vspace{5mm}

\footnotesize{Shuichi Kawashima}\\[2ex]
\footnotesize{Faculty of Mathematics, \\ Kyushu University, Fukuoka 812-8581, Japan,}\\
\footnotesize{kawashim@math.kyushu-u.ac.jp}\\
\end{center}
\vspace{6mm}

\begin{abstract}
This work is devoted to the optimal decay problem for the Euler-Poisson two-fluid system,
which is a classical hydrodynamic model arising in semiconductor sciences.
 By exploring
the influence of the electronic field on the dissipative structure, it is first revealed that
the \textit{irrotationality} plays a key role such that the two-fluid system
has the same dissipative structure as generally hyperbolic systems satisfying the Shizuta-Kawashima condition.
The fact inspires us to give a new decay framework which pays less attention on the traditional spectral analysis.
Furthermore, various decay estimates of solution and its derivatives of fractional order on the framework of Besov spaces are obtained by time-weighted energy approaches in terms of low-frequency and high-frequency decompositions. As direct consequences, the optimal decay rates of $L^{p}(\mathbb{R}^{3})$-$L^{2}(\mathbb{R}^{3})(1\leq p<2)$ type for the Euler-Poisson two-fluid system are also shown.
\end{abstract}

\noindent\textbf{AMS subject classification.} 35M20;\
35Q35;\ 76W05.\\
\textbf{Key words and phrases.} Decay estimates; Euler-Poisson system; Littlewood-Paley pointwise estimates; time-weighted energy approaches; Besov spaces.

\section{Introduction}
\par
Consider an un-magnetized plasma consisting of
electrons with (scaled) mass $m_{e}$ and charge $q_{e}=-1$ and of a
single species of ions with mass $m_{i}$ and charge $q_{i}=+1$. We
denote by $n_{e}=n_{e}(t,x), u_{e}$ ($n_{i}, u_{i}$, respectively)
the density and current density of electrons (ions, respectively)
and by $\mathit\Phi=\mathit\Phi(t,x)$ the electrostatic potential.
By some appropriate re-scaling,  the hydrodynamic model reads as
(see for example, \cite{MRS})
\begin{equation}
\left\{
\begin{array}{l}\frac{\partial}{\partial t}n_a+\nabla\cdot(n_au_a)=0,\\
 m_a\frac{\partial}{\partial t}(n_au_a)+m_a\nabla\cdot(n_au_a\otimes u_a)
+\nabla p_a(n_a)\\
\hspace{50mm}=-q_an_a\nabla\mathit\Phi-m_a\frac{n_au_a}{\tau_a},
 \\
 \lambda^2\Delta\mathit\Phi=n_{e}-n_{i},\  \lim_{|x|\rightarrow +\infty}\mathit\Phi(t,x)=0,
 \end{array} \right.\label{R-E1}
\end{equation}
with $a=e,i$ and $(t,x)\in[0,+\infty)\times\mathbb{R}^{3}$, where
$m_{e},m_{i}$ are the (scaled) electron and ion mass,
$\tau_{e},\tau_{i}>0$ are the momentum relaxation times of electrons
and ion, respectively, and $\lambda>0$ is the Debye length. In this paper, we set these physical constants to be one.
The pressure $p_{a}(a=e,i)$ is a smooth function satisfying
\begin{eqnarray*}
p'_{a}(n_{a})>0 \ \ \mbox{for all}\ \ n_{a}>0.
\end{eqnarray*}

System (\ref{R-E1}) is supplemented by initial conditions for
$n_{a}$ and $u_a \;(a=e,i)$:
\begin{eqnarray}
n_{a}(x,0)=n_{a0}(x),\ \ \ u_{a}(x,0)=u_{a0}(x), \label{R-E2}
\end{eqnarray}

As well known (\cite{MRS}), the time evolution
of the distributes of electrons and positively charged ions in a plasma is well described by the semiclassical Boltzmann-Poisson equations. Unfortunately,
dealing with the kinetic equations remains too expensive from a computational point of view. Consequently,
it is possible to derive some simpler fluid dynamical equations for macroscopic quantities
like density, velocity and energy density, which represents a comprise between physical accuracy
and reduction of computational cost. System (\ref{R-E1}) reduces to the one-fluid Euler-Poisson equations, if the time evolution of electrons is considered only.

\subsection{Known results}
So far there are various topics in mathematical analysis for (\ref{R-E1})-(\ref{R-E2}), such as
the well-posedness of steady state solutions, global existence and large time behavior of solutions and singular limit problems, etc.,
the reader is referred to \cite{A,ACJP,AJ,CG,CW,De,FXZ,G,GHL,HMW1,HMW,HMWY,HZ,JP1,JP2,L,LNX,LY,N,PX,W1,X1,XZ,Y,ZH} and references therein. For brevity, let us only review the global existence and decay estimates of classical solutions for the one-fluid case.
Luo, Natalini and Xin \cite{LNX} first established
the global \textit{exponential} stability of classical solutions near the constant equilibrium in one dimension space.
Guo \cite{G} investigated the irrotational case $(\nabla\times u=0)$ and smooth irrotational solutions are constructed based on the Klein-Gordon
effect, which decay to the equilibrium state uniformly as $(1+t)^{-p}(1<p<3/2)$. Hsiao, Markowich and Wang \cite{HMW1}
dealt with the multidimensional unbounded domain problem $(n=2,3)$ without any geometrical assumptions. Subsequently, Fang,
the first author and Zhang \cite{FXZ,X1}, by performing low-frequency and high-frequency
decomposition methods, established the global exponential stability and diffusive relaxation-limit
of classical solutions on the framework of spatially critical Besov spaces.

The two-fluid equations (\ref{R-E1}) have also received more and more attention.
In one space dimension, Natalini \cite{N}, Wang \cite{W1}, Hsiao and Zhang \cite{HZ} established the global entropy
weak solutions by using the compensated compactness theory, respectively. Zhu and Hattori \cite{ZH} proved the stability of
steady-state solutions for a recombined two fluid Euler-Poisson equations.
Gasser, Hsiao and Li \cite{GHL} investigated the nonlinear diffusive phenomena
of hyperbolic waves. Subsequently, Huang, Mei, Wang and Yang \cite{HMWY} showed the convergence
of the original solution to the diffusion wave with optimal
convergence rates. J\"{u}ngel and Peng \cite{JP1,JP2} justified the
zero-relaxation-time limits based on appropriate compactness arguments.

In the multi-dimensional case, Lattanzio \cite{L} considered the
relaxation limit in a compactness framework for non-smooth solutions under
the assumption that the $L^\infty$-solutions exist in a $\tau$-independent
time interval. The zero-electron-mass limit of (\ref{R-E1})-(\ref{R-E2}) with in the case of ``well-prepared" initial data
was studied by Al\`{\i}, Chen, J\"{u}ngel and Peng \cite{ACJP}. The first author and Zhang \cite{XZ}
developed the frequency-localization Strichartz estimates and
investigated the case of ``ill-prepared" initial data. Huang, Mei and Wang
\cite{HMW} proved the stability of planar diffusion waves. Al\`{\i}
and J\"{u}ngel introduced a technical condition (see \cite{AJ}) that
the electric field $E$ can be divided into two parts
and each part was generated by carriers separately, and
studied the global exponential stability of smooth solutions
to the Cauchy problem. Actually, the condition
reduces the nonlinear interaction between two carriers heavily so that solutions behave as the case of one-fluid.
Recently, Peng and the first author \cite{PX} removed the technical condition and captured the dissipation for $n_{e}-n_{i}$.
Furthermore, global classical solutions
was constructed in the critical Besov spaces. However, the corresponding decay problem in whole space was left open in \cite{PX}.

Based on the decay framework in \cite{UKS}, the second author \cite{Ka} studied generally hyperbolic-parabolic composite systems satisfying the Shizuta-Kawashima's condition and obtained the optimal decay estimates in $H^{l}(\mathbb{R}^{n})\cap L^{1}(\mathbb{R}^{n})(l>2+n/2, l\in\mathbb{Z})$. This effort
has been developed great, for instance, by Hoff and Zumbrun \cite{HZ2} for compressible Navier-Stokes equations, where they
employed the elaborate spectral analysis on the Green's matrix. Li and Yang \cite{LY} first considered the two-fluid equations (\ref{R-E1})-(\ref{R-E2})
by virtue of the spectral analysis  and
showed that the densities converge to its equilibrium state at the rates $(1+t)^{-3/4}$ in the $L^2$-norm and the velocities as well as the electronic field decay at the rates $(1+t)^{-1/4}$ in the $L^2$-norm, as the initial data
$(n_{a0}-\bar{n},u_{a0})\in H^{l}(\mathbb{R}^3)\cap L^{1}(\mathbb{R}^3) (l\geq4)$. To the best of our knowledge, such decay rates are far away from the optimal case, since the decay estimates for velocities and the electronic field are more slowly than that of the standard heat kernel.

 Very recently, based on the work \cite{XK1}, the authors introduced a decay framework for general dissipative hyperbolic system and hyperbolic-parabolic composite system satisfying the Shizuta-Kawashima condition, which allows to pay less attention on the traditional spectral analysis, if the initial data belong to $B^{s_{c}}_{2,1}(\mathbb{R}^n)\cap \dot{B}^{-s}_{2,\infty}(\mathbb{R}^n)(s_{c}:=1+n/2,\ s\in (0,n/2])$.
The new framework can be regarded as the great improvement of \cite{Ka,UKS}, since
$L^{1}(\mathbb{R}^{n})\hookrightarrow\dot{B}^{0}_{1,\infty}(\mathbb{R}^{n})\hookrightarrow\dot{B}^{-n/2}_{2,\infty}(\mathbb{R}^{n})$ and $H^{l}(\mathbb{R}^n)(l>2+n/2, l\in\mathbb{Z})\hookrightarrow B^{s_{c}}_{2,1}(\mathbb{R}^n)$. The interested reader is referred to \cite{XK2}.
The main aim of this paper is to answer the optimal decay for the Euler-Poisson two-fluid system by exploring the influence of the coupled electronic field on the dissipative structure, which is an interesting problem left.

\subsection{Reformulation and main results}
It is convenient to reformulate the two-fluid system (\ref{R-E1}) around the equilibrium state $(\bar{n},0,\bar{n},0,0)$.
Without loss of generality, let us assume that
$\bar{n}=1$ and $p'(\bar{n})=1$.  Denote

$$\sigma_{a}=n_{a}-1,\ \ \ h(\sigma_{a})=\frac{p'(n_{a})}{n_{a}}-1,\ \ \ \ a=e,i. $$
Then, we have
\begin{equation}
\left\{
\begin{array}{l}
\partial_{t} \sigma_{e}+\mathrm{div}u_{e}=-u_{e}\cdot\nabla\sigma_{e}-\sigma_{e}\mathrm{div}u_{e},\\
\partial_{t} u_{e}+\nabla\sigma_{e}+u_{e}-E=-u_{e}\cdot\nabla u_{e}-h(\sigma_{e})\nabla\sigma_{e},\\
\partial_{t} \sigma_{i}+\mathrm{div}u_{i}=-u_{i}\cdot\nabla\sigma_{i}-\sigma_{i}\mathrm{div}u_{i},\\
\partial_{t} u_{i}+\nabla\sigma_{i}+u_{i}+E=-u_{i}\cdot\nabla u_{i}-h(\sigma_{i})\nabla\sigma_{i},\\
\mathrm{div} E=\sigma_{e}-\sigma_{i}, \ \ \ E=\nabla\mathit\Phi,
 \end{array}
\right.\label{R-E3}
\end{equation}
with the initial data
\begin{eqnarray}
\sigma_{a}(x,0)=n_{a0}(x)-1,\ \ \ u_{a}(x,0)=u_{a0}(x), \ \ \  a=e,i. \label{R-E333}
\end{eqnarray}
The corresponding linearized system reads as
\begin{equation}
\left\{
\begin{array}{l}
\partial_{t} \sigma_{e}+\mathrm{div}u_{e}=0,\\
\partial_{t} u_{e}+\nabla\sigma_{e}+u_{e}-E=0,\\
\partial_{t} \sigma_{i}+\mathrm{div}u_{i}=0,\\
\partial_{t} u_{i}+\nabla\sigma_{i}+u_{i}+E=0,\\
\mathrm{div} E=\sigma_{e}-\sigma_{i}, \ \ \ E=\nabla\mathit\Phi.
 \end{array}
\right.\label{R-E4}
\end{equation}

In what follows, we explore the influence of $E$ and understand the dissipative structure of (\ref{R-E4}) in essential.
Set $$w=(\sigma_{e},u_{e},\sigma_{i},u_{i}),\ \ \ \ \tilde{w}=(w,E).$$
More concretely speaking, by using the energy method in Fourier spaces, we get
\begin{eqnarray}
\frac{1}{2}\frac{d}{dt}|\hat{\tilde{w}}|^2+|(\hat{u}_{e},\hat{u}_{i})|^2=0 \label{R-E5}
\end{eqnarray}
and
\begin{eqnarray}
&&\frac{1}{2}\frac{d}{dt}\mathrm{Im}\Big\langle\frac{|\xi|}{1+|\xi|^2}K(\xi)\hat{w},\hat{w}\Big\rangle+
\frac{|\xi|^2}{1+|\xi|^2}|\hat{w}|^2+\frac{1}{1+|\xi|^2}|\widehat{\mathrm{div}E}|^2\nonumber\\&\leq& C|(\hat{u}_{e},\hat{u}_{i})|^2,
\label{R-E6}
\end{eqnarray}
where $\hat{f}$ denotes the Fourier transform of the function $f$ and the matrix $K(\xi)$ is defined by Lemma \ref{lem3.1} in Sect.~\ref{sec:3}.

The fact $\mathrm{curl}E=0$ implies that $\xi\times\hat{E}=0$ which leads to $|\xi|^2|\hat{E}|^2\approx|\xi\cdot\hat{E}|^2$.
Then, (\ref{R-E6}) becomes into
\begin{eqnarray}
\frac{1}{2}\frac{d}{dt}\mathrm{Im}\Big\langle\frac{|\xi|}{1+|\xi|^2}K(\xi)\hat{w},\hat{w}\Big\rangle+
\frac{|\xi|^2}{1+|\xi|^2}|\hat{\tilde{w}}|^2\leq C|(\hat{u}_{e},\hat{u}_{i})|^2. \label{R-E7}
\end{eqnarray}

Therefore, the linearized system (\ref{R-E4}) admits a Lyapunov function of the form
\begin{eqnarray}
E(\hat{\tilde{w}})=\frac{1}{2}|\hat{\tilde{w}}|^2+\frac{\kappa}{2}\mathrm{Im}\Big\langle\frac{|\xi|}{1+|\xi|^2}K(\xi)\hat{w},\hat{w}\Big\rangle, \label{R-E8}
\end{eqnarray}
where $\kappa>0$ is a small constant. Then it is shown that
\begin{eqnarray}
\frac{d}{dt}E(\hat{\tilde{w}})+(1-\kappa C)|(\hat{u}_{e},\hat{u}_{i})|^2+\frac{\kappa|\xi|^2}{1+|\xi|^2}|\hat{\tilde{w}}|^2\leq0, \label{R-E9}
\end{eqnarray}
where we can choose $\kappa>0$ so small that $1-\kappa C\geq0$ and $E(\hat{w})\approx|\hat{\tilde{w}}|^2.$ Furthermore, there exists a constant $c_{0}>0$ such that
 \begin{eqnarray}
|\hat{\tilde{w}}|\leq |\hat{\tilde{w}}_{0}|e^{-c_{0}\eta(\xi)t}, \label{R-E999}
\end{eqnarray}
where $\eta(\xi):=|\xi|^2/(1+|\xi|^2)$.

\begin{rem}\label{rem1.1}
The dissipative structure (\ref{R-E999}) is just the same one as general dissipative systems studied in \cite{UKS}.
The above calculations reveal that the \textit{irrotationality} property of the electronic field $E$ plays a key role. Furthermore, we
develop the Littlewood-Paley pointwise energy estimates for (\ref{R-E4}) on the framework of Besov spaces, see Sect.~\ref{sec:3}.
\end{rem}

Let us sketch the technical obstruction of this paper. To obtain the optimal decay estimates for (\ref{R-E1})-(\ref{R-E2}),
the idea of time-weighted energy estimates which was first established by Matsumura \cite{Ma} is mainly used. Here, in virtue of frequency-localization
Duhamel principle, the time-weighted energy approach in terms of low frequency and high-frequency decomposition are well developed.
Additionally, there appears a difficulty arising from the coupled electronic field $E$ in order to obtain the 1/2 faster
decay rate for the non-degenerate quantities, say velocities. Indeed, we are unable to obtain the sharp decay estimates for velocities directly, since $E$ has no additional half rate. Here we involve some interesting observations on the information behind the equations. Precisely,
adding the two velocity equation in (\ref{R-E3}) to eliminate $E$, which inspire us to obtain the sharp time-weighted decay estimates for the sum of two velocities. To close the weighted energy inequality, it suffices to get the sharp estimates for the difference of two velocities.
Fortunately, it follows from the linearized system (\ref{R-E4}) that
\begin{equation}
\left\{
\begin{array}{l}
\partial_{t} \tilde{\sigma}+\mathrm{div}\tilde{u}=0\\
\partial_{t} \tilde{u}+\nabla\tilde{\sigma}+\tilde{u}=2E\\
\mathrm{div}E=\tilde{\sigma},\ \ E=\nabla\mathit\Phi
 \end{array}
\right. \label{R-E1000}
\end{equation}
with $\tilde{u}=u_{e}-u_{i}$ and $\tilde{\sigma}=\sigma_{e}-\sigma_{i},$ which exactly consists of a one-fluid Euler-Poisson equations.
As shown by \cite{FXZ,HMW1,LNX,X1}, the Euler-Poisson one-fluid system has the exponential stability of classical solutions.
Therefore, we can employ the high-frequency and low-frequency estimates for (\ref{R-E1000}) with the operator $\Delta_{q}\Lambda^{\ell}(q\geq-1, \ 0\leq\ell\leq s_{c}-2)$ and get the exponential decay for linearized solution $(\tilde{\sigma},\tilde{u},E)$. Finally,
the sharp decay estimates for the difference $\tilde{u}$ of velocities can follow from the frequency-localization
Duhamel principle. See the proofs of Lemmas \ref{lem4.4}-\ref{lem4.5} for details.

For the convenience of reader, let us first recall the global-in-time existence of solutions in spatially critical Besov spaces achieved in \cite{PX} ($s_{c}:=5/2$).
\begin{thm}\label{thm1.1}
Suppose that $(n_{a0}-1,u_{a0},E_{0})\in B^{s_{c}}_{2,1}(\mathbb{R}^3)$ where $E_{0}:=\nabla\Delta^{-1}(n_{e0}-n_{i0}).$
There exists a positive constant $\delta_{0}$ such that if
$$\|(n_{a0}-1,u_{a0},E_{0})\|_{B^{s_{c}}_{2,1}(\mathbb{R}^3)}\leq
\delta_{0}, \quad (a=e,i), $$
then system (\ref{R-E1})-(\ref{R-E2}) admits a
unique classical solution $$(n_{a},u_{a},E)\in
\mathcal{C}^{1}([0,\infty)\times \mathbb{R}^3)$$ satisfying
$$(n_{a}-1,u_{a},E) \in\widetilde{\mathcal{C}}(B^{s_{c}}_{2,1}(\mathbb{R}^3))\cap\widetilde{\mathcal{C}}^{1}(B^{s_{c}-1}_{2,1}(\mathbb{R}^3)), \quad (a=e,i). $$
Moreover, the following energy inequality holds
\begin{eqnarray*}
&&\|(n_{a}-1,u_{a},E)\|_{\widetilde{L}^{\infty}(B^{s_{c}}_{2,1}(\mathbb{R}^3))}\nonumber\\&&
+\mu_{0}\Big\{\|(n_{e}-n_{i},u_{a},E)\|_{\widetilde{L}^{2}(B^{s_{c}}_{2,1}(\mathbb{R}^3))}
+\|\nabla n_{a}\|_{\widetilde{L}^{2}(B^{s_{c}-1}_{2,1}(\mathbb{R}^3))}\Big\}\nonumber\\
&\leq& C_{0}\|(n_{a0}-1,u_{a0},E_{0})\|_{B^{s_{c}}_{2,1}(\mathbb{R}^3)}, \quad (a=e,i),
\end{eqnarray*}
where $\mu_{0}$ and $C_{0}$ are two positive constants.
\end{thm}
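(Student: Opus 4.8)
The plan is to combine a standard local existence result with a global a priori estimate obtained by a frequency-localized energy method, and to close the argument by a continuation (bootstrap) argument. Since after the normalization $p'(\bar n)=1$ the principal part of the reformulated system (\ref{R-E3}) is symmetric hyperbolic, with a linear relaxation term in the velocity equations and a nonlocal coupling through $E$, local-in-time existence and uniqueness in the algebra $B^{s_{c}}_{2,1}(\mathbb{R}^3)$ ($s_{c}=5/2=1+3/2$) follows from a Friedrichs-type iteration scheme: one constructs approximate solutions by solving linearized problems, establishes uniform bounds in $\widetilde{\mathcal{C}}(B^{s_{c}}_{2,1})\cap\widetilde{\mathcal{C}}^{1}(B^{s_{c}-1}_{2,1})$, proves contraction in the weaker norm $B^{s_{c}-1}_{2,1}$, and passes to the limit. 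At each step the electric field is recovered from $\mathrm{div}\,E=\sigma_e-\sigma_i$, $E=\nabla\mathit\Phi$, i.e. $E=\nabla\Delta^{-1}(\sigma_e-\sigma_i)$, so that $E$ inherits its regularity directly from $\sigma_e-\sigma_i$.

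The heart of the matter is the global a priori estimate, for which I would mimic the linearized computation (\ref{R-E5})--(\ref{R-E9}) at the level of each dyadic block. Applying $\Delta_q$ to (\ref{R-E3}), the basic $L^2$ energy identity yields, for $w=(\sigma_e,u_e,\sigma_i,u_i)$ and $\tilde w=(w,E)$,
\begin{equation*}
\tfrac12\tfrac{d}{dt}\|\Delta_q\tilde w\|_{L^2}^2+\|(\Delta_q u_e,\Delta_q u_i)\|_{L^2}^2 = \mathcal{R}_q,
\end{equation*}
where $\mathcal{R}_q$ collects the nonlinear and commutator contributions. This identity only sees the velocity dissipation; to recover the missing dissipation of the density gradients and of $E$, I would introduce the frequency-localized analogue of the compensating functional built from the Kawashima-type matrix $K(\xi)$ appearing in (\ref{R-E6}). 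Crucially, the \emph{irrotationality} $\mathrm{curl}\,E=0$ converts $|\widehat{\mathrm{div}\,E}|^2$ into full control of $|\hat E|^2$ and, through $\mathrm{div}\,E=\sigma_e-\sigma_i$, into dissipation of $\sigma_e-\sigma_i$, exactly as in (\ref{R-E7}). Forming the Lyapunov functional $\mathcal{L}_q\approx\|\Delta_q\tilde w\|_{L^2}^2$ as in (\ref{R-E8}) with a small parameter $\kappa$, one arrives at a block inequality
\begin{equation*}
\tfrac{d}{dt}\mathcal{L}_q+c\,\eta_q\,\|\Delta_q\tilde w\|_{L^2}^2+c\,\|(\Delta_q u_e,\Delta_q u_i)\|_{L^2}^2 \le C\,\mathcal{R}_q,\qquad \eta_q:=\frac{2^{2q}}{1+2^{2q}},
\end{equation*}
which simultaneously captures the relaxation dissipation of $u_a$, the Poisson-induced dissipation of $\sigma_e-\sigma_i$ and of $E$, and the Shizuta--Kawashima gain of $\nabla\sigma_a$.

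The remaining work is to sum these block inequalities against the weight $2^{qs_{c}}$ and integrate in time, which reproduces the left-hand side of the asserted energy inequality once $\mathcal{L}_q\approx\|\Delta_q\tilde w\|_{L^2}^2$. The nonlinear remainders $\mathcal{R}_q$ must be estimated in $B^{s_{c}}_{2,1}$ by Bony's paraproduct decomposition together with Moser-type product estimates and commutator estimates of Chemin--Lerner type; since $B^{s_{c}}_{2,1}\hookrightarrow L^\infty$ is an algebra, the quadratic structure of the terms $u_a\cdot\nabla\sigma_a$, $\sigma_a\,\mathrm{div}\,u_a$, $h(\sigma_a)\nabla\sigma_a$ and $u_a\cdot\nabla u_a$ allows each to be bounded by the product of the solution norm and a dissipation norm, hence absorbed under the smallness hypothesis $\|(n_{a0}-1,u_{a0},E_0)\|_{B^{s_{c}}_{2,1}}\le\delta_0$. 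A standard continuation argument then upgrades the local solution to a global one and delivers the stated inequality.

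The main obstacle I anticipate is the \emph{partial} and \emph{nonlocal} character of the dissipation. The relaxation damps only the velocities, so the density-gradient dissipation must be manufactured through the Shizuta--Kawashima compensating functional, while the dissipation of $\sigma_e-\sigma_i$ and of $E$ becomes available only after exploiting irrotationality. Keeping these mechanisms compatible across frequencies is delicate, because the effective rate $\eta_q$ degenerates like $2^{2q}$ at low frequencies (parabolic behaviour) and saturates at high frequencies (exponential behaviour), so the nonlinear estimates must respect this low-frequency/high-frequency split. In particular, the density admits only \emph{gradient} dissipation $\|\nabla\sigma_a\|_{\widetilde{L}^2(B^{s_{c}-1}_{2,1})}$, with no damping of its low modes, so the paraproduct estimate for the worst quadratic term $h(\sigma_a)\nabla\sigma_a$ has to be arranged to land precisely in this gradient-dissipation norm rather than in the full $B^{s_{c}}_{2,1}$ velocity-dissipation norm. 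Verifying that every nonlinear remainder can be absorbed in this way, uniformly in $q$ and in time, is the crux of the global estimate.
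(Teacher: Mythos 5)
First, a point of comparison: the paper does not prove Theorem \ref{thm1.1} at all --- it is recalled verbatim from \cite{PX}, and the present paper only proves the linearized pointwise estimates of Sect.~\ref{sec:3} and the decay results built on top of the quoted global existence. So your proposal can only be measured against the expected argument of \cite{PX}; in spirit (local existence by iteration, frequency-localized energy method with a Shizuta--Kawashima compensating functional, irrotationality of $E$, smallness and continuation) your plan is indeed that argument, and it is also consistent with how this paper handles the linear system.

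There are, however, genuine gaps. Your central block inequality
\begin{equation*}
\frac{d}{dt}\mathcal{L}_q+c\,\eta_q\,\|\Delta_q\tilde w\|_{L^2}^2+c\,\|(\Delta_q u_e,\Delta_q u_i)\|_{L^2}^2\le C\,\mathcal{R}_q,
\qquad \eta_q=\frac{2^{2q}}{1+2^{2q}},
\end{equation*}
is false at the low-frequency block $q=-1$: the symbol $\eta(\xi)=|\xi|^2/(1+|\xi|^2)$ degenerates as $\xi\to0$ \emph{inside} $\mathrm{supp}\,\mathcal{F}\Delta_{-1}f$, so the compensating functional controls only a gradient-type quantity there, not $\eta_{-1}\|\Delta_{-1}\tilde w\|_{L^2}^2$ with a uniform positive rate. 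If your inequality were true as displayed, the linearized system (\ref{R-E4}) would be exponentially stable in $L^2$, which is false in $\mathbb{R}^3$ (the ``sum'' part $(\sigma_e+\sigma_i,u_e+u_i)$, from which $E$ cancels, behaves like the damped Euler linearization and decays only at heat-kernel rates at low frequencies; cf.\ (\ref{R-E999})); it also contradicts your own later, correct, remark that the individual densities have no damping of their low modes. The localized dissipation inequality must be kept pointwise in $\xi$, as in (\ref{R-E34}). Relatedly, the distinctive content of the stated energy inequality --- full, non-degenerate $\widetilde{L}^{2}(B^{s_c}_{2,1})$ dissipation of $n_e-n_i$ and of $E$, versus only $\|\nabla n_a\|_{\widetilde{L}^{2}(B^{s_c-1}_{2,1})}$ for the individual densities --- cannot come out of a single rate $\eta_q$ applied to all of $\tilde w$. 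One must read the term $\frac{1}{1+|\xi|^2}|\widehat{\mathrm{div}E}|^2$ in two ways (via $\mathrm{div}E=\sigma_e-\sigma_i$ it is full-rate dissipation of $n_e-n_i$ at low frequencies; via irrotationality it is full-rate dissipation of $E$ at high frequencies), and supplement it at low frequencies by a cross term of the type $\langle\Delta_q E,\Delta_q(u_e-u_i)\rangle$ --- exactly the mechanism this paper uses in Claim \ref{claim4.1}. Finally, the nonlinear and commutator estimates, which you yourself label the crux (in particular $h(\sigma_a)\nabla\sigma_a$, whose bound must land in the gradient-dissipation norm), are asserted but never carried out; as written the proposal is a strategy outline rather than a proof.
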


\begin{rem}\label{rem1.2}
In the periodic domain $\mathbb{T}^3$, the dissipation rate from $(n_{e},n_{i})$  can be further available by using Poincar\'{e} inequality, which leads to
the exponential decay of classical solutions near to equilibrium, the interested reader is referred to \cite{PX} for details. However, the situation in whole space $\mathbb{R}^3$ is totally different. 
\end{rem}

In the following, we begin to state main results of this paper. Denote $\Lambda^{\alpha}f:=\mathcal{F}^{-1}|\xi|^{\alpha}\mathcal{F}f ( \alpha\in \mathbb{R})$.
\begin{thm}\label{thm2.4}
Let $(n_{a},u_{a},E)(t,x)$ be the global classical solution of Theorem \ref{thm1.1}. If further the initial data $(n_{a0}-1,u_{a0},E_{0})\in \dot{B}^{-s}_{2,\infty}(\mathbb{R}^{n})(0<s\leq 3/2)$ and
$$\mathcal{M}_{0}:=\|(n_{a0}-1,u_{a0},E_{0})\|_{B^{s_{c}}_{2,1}(\mathbb{R}^{3})\cap\dot{B}^{-s}_{2,\infty}(\mathbb{R}^{3})}$$
is sufficiently small. Then the classical solution $(n_{a},u_{a},E)(t,x)$  satisfies the following decay estimates
\begin{eqnarray}
\|\Lambda^{\ell}[(n_{a}-1,u_{a},E)]\|_{X_{1}(\mathbb{R}^{3})}\lesssim \mathcal{M}_{0}(1+t)^{-\frac{s+\ell}{2}} \label{R-E10}
\end{eqnarray}
for $0\leq\ell\leq s_{c}-1$, where $X_{1}:=B^{s_{c}-1-\ell}_{2,1}$ if $0\leq\ell<s_{c}-1$ and $X_{1}:=\dot{B}^{0}_{2,1}$ if $\ell=s_{c}-1$;
\begin{eqnarray}
\|\Lambda^{\ell}(u_{e},u_{i},n_{e}-n_{i})(t,\cdot)\|_{X_{2}(\mathbb{R}^{3})}\lesssim \mathcal{M}_{0}(1+t)^{-\frac{s+\ell+1}{2}} \label{R-E11}
\end{eqnarray}
for $0\leq\ell\leq s_{c}-2$, where $X_{2}:=B^{s_{c}-2-\ell}_{2,1}$ if $0\leq\ell<s_{c}-2$ and $X_{2}:=\dot{B}^{0}_{2,1}$ if $\ell=s_{c}-2$.
\end{thm}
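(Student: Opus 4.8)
The plan is to combine the frequency-localized linear decay coming from the pointwise bound (\ref{R-E999}) with a time-weighted energy argument run on the Duhamel representation of the nonlinear system (\ref{R-E3}). First I would translate (\ref{R-E999}) into Littlewood-Paley form: applying a block $\Delta_q$ to the linearized system (\ref{R-E4}) and repeating the Fourier-energy computation of the introduction yields $\|\Delta_q\tilde{w}(t)\|_{L^2}\lesssim e^{-c_0\eta_q t}\|\Delta_q\tilde{w}_0\|_{L^2}$ with $\eta_q=2^{2q}/(1+2^{2q})$. For $q\le 0$ one has $\eta_q\approx 2^{2q}$ (heat-type), and for $q\ge 0$ one has $\eta_q\approx 1$ (exponential). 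Summing $2^{q\ell}e^{-c_0\eta_q t}\|\Delta_q\tilde{w}_0\|_{L^2}$ against the $\dot{B}^{-s}_{2,\infty}$ bound on low frequencies produces the linear rate $(1+t)^{-(s+\ell)/2}$ of (\ref{R-E10}), while the high frequencies decay exponentially and are absorbed by the $B^{s_c}_{2,1}$ norm furnished by Theorem \ref{thm1.1}.

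Next I would treat the nonlinearity through a frequency-localization Duhamel principle. Writing $\tilde{w}(t)=\mathcal{G}(t)\tilde{w}_0+\int_0^t\mathcal{G}(t-\tau)N(\tau)\,d\tau$, where $N$ collects the quadratic terms on the right-hand side of (\ref{R-E3}), I would introduce a time-weighted functional obtained by multiplying $\|\Lambda^\ell\tilde{w}(\tau)\|_{X_1}$ by $(1+\tau)^{(s+\ell)/2}$ and taking the supremum in $\tau$ over the admissible range of $\ell$. Estimating $N$ by the Besov product laws together with the a priori bound of Theorem \ref{thm1.1}, and using that each quadratic factor already carries its own decay, the Duhamel integral is controlled by the square of this functional. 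A standard splitting of the time integral (near $\tau=0$, intermediate, and near $\tau=t$) then yields $\mathcal{D}(t)\lesssim\mathcal{M}_0+\mathcal{D}(t)^2$, and smallness of $\mathcal{M}_0$ closes the bootstrap, giving (\ref{R-E10}).

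The genuinely delicate point is the gain of the extra half rate in (\ref{R-E11}), since $E$ carries no additional decay and a direct estimate on $u_e,u_i$ stalls at the rate of (\ref{R-E10}). Following the observation in the introduction, I would split the velocities into their sum and difference. Adding the two momentum equations in (\ref{R-E3}) eliminates $E$, so the sum $u_e+u_i$ (coupled with $\sigma_e+\sigma_i$) obeys a pure damped Euler system with quadratic source, which satisfies the Shizuta-Kawashima condition without the Poisson coupling; the velocity component of its Green's matrix carries an extra $|\xi|$ factor, so running the time-weighted argument for the sum yields the improved exponent $(s+\ell+1)/2$. For the difference $\tilde{u}=u_e-u_i$ I would use the one-fluid Euler-Poisson subsystem (\ref{R-E1000}), which enjoys exponential stability: localizing with $\Delta_q\Lambda^\ell$ and combining its high- and low-frequency bounds gives exponential decay of the linearized difference, and the nonlinear difference inherits a faster-than-half rate through Duhamel. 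Adding the sum and difference recovers the sharp rate for each $u_a$ separately, while the corresponding rate for $n_e-n_i$ follows from the Poisson relation $\mathrm{div}\,E=\sigma_e-\sigma_i$. The main obstacle throughout is the bookkeeping of how the exponential decay of the difference system meshes with the slower heat-type low-frequency decay inside the Duhamel iteration, so that the half-rate gain survives the nonlinear coupling.
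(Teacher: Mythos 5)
Your proposal is correct and follows the same overall architecture as the paper: Littlewood--Paley pointwise energy estimates for the linearized system, a frequency-localized Duhamel principle feeding a time-weighted functional closed by smallness, the sum/difference decomposition of the velocities to gain the extra half rate, exponential decay of the one-fluid Euler--Poisson difference subsystem (\ref{R-E1000}), and the Poisson relation $\mathrm{div}E=\sigma_{e}-\sigma_{i}$ for the $n_{e}-n_{i}$ estimate. Your treatment of the difference $u_{e}-u_{i}$ is exactly the paper's Claim \ref{claim4.1} and Lemma \ref{lem4.5}, and your bootstrap $\mathcal{D}(t)\lesssim\mathcal{M}_{0}+\mathcal{D}(t)^{2}$ is Proposition \ref{prop4.2} in disguise (the paper keeps the non-weighted norm $\mathcal{E}_{0}(t)$, bounded by Theorem \ref{thm1.1}, as a separate factor, which is what your appeal to the a priori bound amounts to).

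The one place where you genuinely diverge is the sum $u_{e}+u_{i}$. You view $(\sigma_{e}+\sigma_{i},u_{e}+u_{i})$ as a closed damped Euler system and extract the half-rate gain from the extra $|\xi|$ factor carried by the velocity component of its Green's matrix at low frequencies. That is a valid mechanism, but it requires a mode (spectral) analysis of the damped Euler Green's matrix --- precisely the kind of analysis this paper is structured to avoid. The paper's Lemma \ref{lem4.4} is more elementary: after adding the two momentum equations of (\ref{R-E3}) to cancel $E$, it treats the result as a damped ODE
\begin{equation*}
\partial_{t}\Delta_{q}\Lambda^{\ell}(u_{e}+u_{i})+\Delta_{q}\Lambda^{\ell}(u_{e}+u_{i})
=-\Delta_{q}\Lambda^{\ell}\nabla(\sigma_{e}+\sigma_{i})+\Delta_{q}\Lambda^{\ell}(f_{2e}+f_{2i}),
\end{equation*}
solves it with the factor $e^{-t}$, and obtains the improved exponent because the source $\nabla(\sigma_{e}+\sigma_{i})$ carries one extra derivative, hence the rate $(1+\tau)^{-\frac{s+\ell+1}{2}}$ already encoded in the weighted functional $\mathcal{E}_{1}(t)$; no new linear theory for the sum subsystem is needed. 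Your route buys a more transparent structural explanation of where the half rate comes from (Darcy-type behavior $\hat{u}\sim-i\xi\hat{\sigma}$ of damped Euler), at the cost of re-introducing spectral machinery and of having to track, inside the Duhamel iteration, the separate low-frequency weights on the density-source and velocity-source columns of that Green's matrix; the paper's route reuses objects already defined and closes in two lines.
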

Note that the $L^p(\mathbb{R}^{3})$ embedding property in Lemma \ref{lem8.5}, we obtain the optimal decay rates on the framework of Besov spaces.
\begin{thm}\label{thm2.5}
Let $(n_{a},u_{a},E)(t,x)$ be the global classical solution of Theorem \ref{thm1.1}. If further the initial data $(n_{a0}-1,u_{a0},E_{0})\in L^p(\mathbb{R}^{3})(1\leq p<2)$ and
$$\widetilde{\mathcal{M}}_{0}:=\|(n_{a0}-1,u_{a0},E_{0})\|_{B^{s_{c}}_{2,1}(\mathbb{R}^{3})\cap L^p(\mathbb{R}^{3})}$$
is sufficiently small. Then the classical solutions $(n_{a},u_{a},E)(t,x)$  satisfies the following optimal decay estimates
\begin{eqnarray}
\|\Lambda^{\ell}[(n_{a}-1,u_{a},E)]\|_{X_{1}(\mathbb{R}^{3})}\lesssim \widetilde{\mathcal{M}}_{0}(1+t)^{-\gamma_{p,2}-\frac{\ell}{2}} \label{R-E12}
\end{eqnarray}
for $0\leq\ell\leq s_{c}-1$,
and
\begin{eqnarray}
\|\Lambda^{\ell}(u_{e},u_{i},n_{e}-n_{i})(t,\cdot)\|_{X_{2}(\mathbb{R}^{3})}\lesssim \widetilde{\mathcal{M}}_{0}(1+t)^{-\gamma_{p,2}-\frac{\ell+1}{2}} \label{R-E13}
\end{eqnarray}
for $0\leq\ell\leq s_{c}-2$, where $X_{1}$ and $X_{2}$ are the same space notations as in Theorem \ref{thm2.4}. We denote by $\gamma_{p,2}:=\frac{3}{2}(\frac{1}{p}-\frac{1}{2})$ the $L^p(\mathbb{R}^{3})$-$L^2(\mathbb{R}^{3})$ decay rates for the heat kernel.
\end{thm}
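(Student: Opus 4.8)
The plan is to deduce Theorem~\ref{thm2.5} as a direct corollary of Theorem~\ref{thm2.4}, converting the $L^p$ hypothesis into the homogeneous Besov hypothesis of that theorem by means of the embedding recorded in Lemma~\ref{lem8.5}. The key inclusion is
\begin{eqnarray*}
L^{p}(\mathbb{R}^{3})\hookrightarrow \dot{B}^{-s}_{2,\infty}(\mathbb{R}^{3}),\qquad s=3\Big(\tfrac{1}{p}-\tfrac{1}{2}\Big),
\end{eqnarray*}
valid for $1\leq p\leq 2$. First I would observe that this exponent satisfies $s=2\gamma_{p,2}$, since $\gamma_{p,2}=\tfrac{3}{2}(\tfrac1p-\tfrac12)$, and that it lies in the admissible range of Theorem~\ref{thm2.4}: as $p$ runs over $[1,2)$ the factor $\tfrac1p-\tfrac12$ runs over $(0,\tfrac12]$, so $s\in(0,\tfrac32]$, with the endpoint $s=\tfrac32$ attained exactly at $p=1$.

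Second, the embedding would be justified by a Bernstein argument applied to each dyadic block: for $f$ with Fourier support in $|\xi|\sim 2^{q}$ one has $\|\Delta_{q}f\|_{L^{2}}\lesssim 2^{3q(1/p-1/2)}\|\Delta_{q}f\|_{L^{p}}\lesssim 2^{qs}\|f\|_{L^{p}}$, where the final step uses the uniform boundedness of the Littlewood--Paley projector on $L^{p}$ (Young's inequality at the endpoint $p=1$, the Mikhlin multiplier theorem for $1<p<2$). Multiplying by $2^{-qs}$ and taking the supremum over $q$ gives $\|f\|_{\dot{B}^{-s}_{2,\infty}}\lesssim\|f\|_{L^{p}}$. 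Applying this componentwise to $(n_{a0}-1,u_{a0},E_{0})$ and combining with the common high-regularity part $B^{s_{c}}_{2,1}$ yields $\mathcal{M}_{0}\lesssim\widetilde{\mathcal{M}}_{0}$, so the smallness of $\widetilde{\mathcal{M}}_{0}$ assumed here guarantees the smallness of $\mathcal{M}_{0}$ required to invoke Theorem~\ref{thm2.4}.

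Finally I would substitute $s=2\gamma_{p,2}$ into the conclusions of Theorem~\ref{thm2.4}. The first family decays at rate $(1+t)^{-(s+\ell)/2}=(1+t)^{-\gamma_{p,2}-\ell/2}$, which is precisely \eqref{R-E12}, while the velocity and charge-difference estimates, which carry the extra half-rate, decay at $(1+t)^{-(s+\ell+1)/2}=(1+t)^{-\gamma_{p,2}-(\ell+1)/2}$, matching \eqref{R-E13}; the functional spaces $X_{1},X_{2}$ are unchanged. There is no genuine analytic obstacle beyond this bookkeeping, since all the work has already been done in Theorem~\ref{thm2.4}. The only point deserving care is the endpoint $p=1$, where one must confirm both that the embedding constant remains finite (obtained from Young's inequality rather than the multiplier theorem, which degenerates at $p=1$) and that the resulting value $s=\tfrac32$ still lies inside the \emph{closed} range $0<s\leq\tfrac32$ allowed by Theorem~\ref{thm2.4}.
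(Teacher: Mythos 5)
Your proposal is correct and coincides with the paper's own argument: the paper deduces Theorem~\ref{thm2.5} from Theorem~\ref{thm2.4} in exactly this way, invoking the embedding $L^{p}(\mathbb{R}^{3})\hookrightarrow\dot{B}^{-s}_{2,\infty}(\mathbb{R}^{3})$ of Lemma~\ref{lem8.5} with $s=3(\tfrac{1}{p}-\tfrac{1}{2})=2\gamma_{p,2}\in(0,\tfrac{3}{2}]$ and substituting this value of $s$ into \eqref{R-E10}--\eqref{R-E11}. Your additional Bernstein-type justification of the embedding and the check that smallness of $\widetilde{\mathcal{M}}_{0}$ forces smallness of $\mathcal{M}_{0}$ are sound bookkeeping that the paper leaves implicit.
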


\begin{rem}\label{rem1.3}
Let us mention that Theorems \ref{thm2.4}-\ref{thm2.5} exhibit the various decay rates of solution and its derivatives of fractional order. The harmonic analysis allows to reduce significantly the regularity requirements on the initial data in comparison with \cite{LY}.
It is worth noting that the derivative index $\ell$ can take values in the interval, for example, $[0,s_{c}-1]$ rather than nonnegative integers only. Additionally, the decay of the non-degenerate part $(u_{e},u_{i},n_{e}-n_{i})$ of solution is faster at half rate among all the components of solutions.
\end{rem}

As an immediate consequence of Theorems \ref{thm2.4}-\ref{thm2.5}, the optimal decay rates in the usual $L^2(\mathbb{R}^{3})$ space are available.
\begin{cor}\label{cor1.1}
Let $(n_{a},u_{a},E)(t,x)$ be the global classical solutions of Theorem \ref{thm1.1}.
\begin{itemize}
\item [(i)]  If $\mathcal{M}_{0}$ is sufficiently small, then
\begin{eqnarray}
\|\Lambda^{\ell}(n_{a}-1,u_{a},E)\|_{L^2(\mathbb{R}^{3})}\lesssim \mathcal{M}_{0}(1+t)^{-\frac{\ell+s}{2}},  \  0\leq\ell\leq s_{c}-1; \label{R-E14}
\end{eqnarray}
\begin{eqnarray}
\hspace{-10mm}\|\Lambda^{\ell}(u_{e},u_{i},n_{e}-n_{i})(t,\cdot)\|_{L^2(\mathbb{R}^{3})}\lesssim  \mathcal{M}_{0}(1+t)^{-\frac{\ell+s+1}{2}}, \  0\leq\ell\leq s_{c}-2. \label{R-E15}
\end{eqnarray}

\item [(ii)] If $\widetilde{\mathcal{M}}_{0}$ is sufficiently small, then
\begin{eqnarray}
\|\Lambda^{\ell}(n_{a}-1,u_{a},E)\|_{L^2(\mathbb{R}^{3})}\lesssim \widetilde{\mathcal{M}}_{0}(1+t)^{-\gamma_{p,2}-\frac{\ell}{2}}, \ 0\leq\ell\leq s_{c}-1; \label{R-E16}
\end{eqnarray}
\begin{eqnarray}
\hspace{-10mm}\|\Lambda^{\ell}(u_{e},u_{i},n_{e}-n_{i})(t,\cdot)\|_{L^2(\mathbb{R}^{3})}\lesssim \widetilde{\mathcal{M}}_{0}(1+t)^{-\gamma_{p,2}-\frac{\ell+1}{2}},\  0\leq\ell\leq s_{c}-2. \label{R-E17}
\end{eqnarray}
\end{itemize}
\end{cor}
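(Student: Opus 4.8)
The plan is to read the $L^2$ bounds off directly from Theorems \ref{thm2.4} and \ref{thm2.5}, the only ingredient being that each admissible space $X_1$ and $X_2$ embeds continuously into $L^2(\mathbb{R}^3)$. Granting this, part (i) follows for every admissible $\ell$ from the chain
\[
\|\Lambda^{\ell}(n_a-1,u_a,E)\|_{L^2}\lesssim\|\Lambda^{\ell}(n_a-1,u_a,E)\|_{X_1}\lesssim\mathcal{M}_0(1+t)^{-\frac{s+\ell}{2}},
\]
where the first inequality is the embedding $X_1\hookrightarrow L^2$ and the second is \eqref{R-E10}; the degenerate bound \eqref{R-E15} is obtained identically from \eqref{R-E11} together with $X_2\hookrightarrow L^2$. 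Part (ii) is then verbatim, replacing \eqref{R-E10}--\eqref{R-E11} by \eqref{R-E12}--\eqref{R-E13} from Theorem \ref{thm2.5}, which already deliver the decay in the $X_1,X_2$ norms under the $L^p$ smallness assumption. Hence the whole corollary reduces to the two embeddings.

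The embedding step is the only thing requiring an argument, and it splits according to the definition of $X_1$ (and likewise $X_2$). In the generic regime $0\le\ell<s_c-1$ one has $X_1=B^{s_c-1-\ell}_{2,1}$ with $s_c-1-\ell>0$, and I would invoke the standard chain $B^{\sigma}_{2,1}\hookrightarrow B^{0}_{2,1}\hookrightarrow L^2$ valid for all $\sigma\ge0$: the first inclusion costs only the fixed constant $2^{\sigma}$ absorbed at the low block $q=-1$, while the second is the inhomogeneous form of $\ell^1\hookrightarrow\ell^2$. In the borderline case $\ell=s_c-1$ one has $X_1=\dot{B}^0_{2,1}$, and the inclusion $\dot{B}^0_{2,1}(\mathbb{R}^3)\hookrightarrow L^2(\mathbb{R}^3)$ follows from the almost-orthogonality identity $\|f\|_{L^2}^2\approx\sum_{q\in\mathbb{Z}}\|\dot{\Delta}_q f\|_{L^2}^2$ and $\ell^1\hookrightarrow\ell^2$, namely
\[
\|f\|_{L^2}=\Big(\sum_{q\in\mathbb{Z}}\|\dot{\Delta}_q f\|_{L^2}^2\Big)^{1/2}\le\sum_{q\in\mathbb{Z}}\|\dot{\Delta}_q f\|_{L^2}=\|f\|_{\dot{B}^0_{2,1}}.
\]
The same dichotomy applies to $X_2$ with $s_c-2$ in place of $s_c-1$.

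I do not expect a genuine obstacle here, since Theorems \ref{thm2.4}--\ref{thm2.5} already furnish the decay in the stronger $X_1,X_2$ norms and the corollary is a soft byproduct. The only point deserving minor care is the bookkeeping between the generic and borderline values of $\ell$, and the consistency between the homogeneous and inhomogeneous Besov scales at the endpoints $\ell=s_c-1$ and $\ell=s_c-2$, where $\Lambda^{\ell}$ is a genuinely nonlocal fractional derivative for which $\dot{B}^0_{2,1}$ is the natural receptacle. Once the inclusions $X_1,X_2\hookrightarrow L^2$ are recorded, estimates \eqref{R-E14}--\eqref{R-E17} follow at once.
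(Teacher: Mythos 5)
Your proposal is correct and is exactly the argument the paper intends: Corollary \ref{cor1.1} is stated there as an ``immediate consequence'' of Theorems \ref{thm2.4}--\ref{thm2.5}, obtained precisely by composing the decay estimates \eqref{R-E10}--\eqref{R-E13} with the embeddings $B^{s_c-1-\ell}_{2,1}\hookrightarrow L^2$ (via Lemma \ref{lem2.2}, items (2)--(3)) and $\dot{B}^{0}_{2,1}\hookrightarrow L^2$ (Lemma \ref{lem2.2}, item (1)) at the borderline indices. Your handling of the generic versus endpoint cases of $\ell$ matches the paper's dichotomy in the definitions of $X_1$ and $X_2$, so there is nothing to add.
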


\begin{rem}\label{rem1.4}
Taking $p=1$ in Corollary \ref{cor1.1}, we deduce the following important decay rates for the Euler-Poisson two-fluid system (\ref{R-E1})-(\ref{R-E2}):
$$\|(n_{e}-1, n_{i}-1,E)\|_{L^2}\lesssim (1+t)^{-\frac{3}{4}}, \ \ \ \|(\nabla n_{e}, \nabla n_{i},\nabla E)\|_{L^2}\lesssim (1+t)^{-\frac{5}{4}},$$
$$\|(u_{e},u_{i},n_{e}-n_{i})\|_{L^2}\lesssim (1+t)^{-\frac{5}{4}},$$
which improve those decay results in \cite{LY} on the framework of spatially Besov spaces of relatively weaker regularity.
\end{rem}

The paper is organized as follows. In Sect.~\ref{sec:2}, we review the Littlewood-Paley decomposition theory and present
the definition of Besov spaces as well as some useful inequalities in  Besov spaces. Sect.~\ref{sec:3} is devoted to develop the L-P pointwise energy estimates for the linearized system (\ref{R-E4}) and
deduce the decay estimates on the framework of spatially Besov spaces. In Sect.~\ref{sec:4}, we perform the modified time-weighted energy approach in terms of the low-frequency and high-frequency decomposition to obtain decay estimates for (\ref{R-E1})-(\ref{R-E2}). The paper will be end with an Appendix (Sect.~\ref{sec:5}), where we present some interpolation inequalities which are used in Sect.~\ref{sec:3} and Sect.~\ref{sec:4}.

\section{Preliminary}\setcounter{equation}{0}\label{sec:2}
Throughout the paper, we present some notations. Denote by
$\langle\cdot,\cdot\rangle$ to the standard inner product in $\mathbb{C}^{3}$. $f\lesssim g$ denotes $f\leq Cg$, where $C>0$
is a generic constant. $f\thickapprox g$ means $f\lesssim g$ and
$g\lesssim f$.  Denote by $\mathcal{C}([0,T],X)$ (resp., $\mathcal{C}^{1}([0,T],X)$)
the space of continuous (resp., continuously differentiable)
functions on $[0,T]$ with values in a Banach space $X$. For
simplicity, the notation $\|(f,g)\|_{X}$ means $\|f\|_{X}+\|g\|_{X}$ with $f,g\in X$.

The proofs of most of the results presented require a
dyadic decomposition of Fourier variables, so we recall briefly the
Littlewood-Paley decomposition and Besov spaces in $\mathbb{R}^{n}$. The reader also refers to \cite{BCD}
for more details.

Let us start with the Fourier transform. The Fourier transform $\hat{f}$ (or $\mathcal{F}f$)
of a $L^1$-function $f$ is given by
$$\mathcal{F}f=\int_{\mathbb{R}^{n}}f(x)e^{-2\pi x\cdot\xi}dx.$$ More
generally, the Fourier transform of a tempered distribution $f\in\mathcal{S}'$ is defined by
the dual argument in the standard way.

 Choose
$\phi_{0}\in \mathcal{S}$ such that $\phi_{0}$ is even,
$$\mathrm{supp}\phi_{0}:=A_{0}=\Big\{\xi\in\mathbb{R}^{n}:\frac{3}{4}\leq|\xi|\leq\frac{8}{3}\Big\},\  \mbox{and}\ \ \phi_{0}>0\ \ \mbox{on}\ \ A_{0}.$$
Set $A_{q}=2^{q}A_{0}$ for $q\in\mathbb{Z}$. Furthermore, we define
$$\phi_{q}(\xi)=\phi_{0}(2^{-q}\xi)$$ and define $\Phi_{q}\in
\mathcal{S}$ by
$$\mathcal{F}\Phi_{q}(\xi)=\frac{\phi_{q}(\xi)}{\sum_{q\in \mathbb{Z}}\phi_{q}(\xi)}.$$
It follows that both $\mathcal{F}\Phi_{q}(\xi)$ and $\Phi_{q}$ are
even and satisfy the following properties:
$$\mathcal{F}\Phi_{q}(\xi)=\mathcal{F}\Phi_{0}(2^{-q}\xi),\ \ \ \mathrm{supp}\ \mathcal{F}\Phi_{q}(\xi)\subset A_{q},\ \ \ \Phi_{q}(x)=2^{qn}\Phi_{0}(2^{q}x)$$
and
$$\sum_{q=-\infty}^{\infty}\mathcal{F}\Phi_{q}(\xi)=\begin{cases}1,\ \ \ \mbox{if}\ \ \xi\in\mathbb{R}^{n}\setminus \{0\},
\\ 0, \ \ \ \mbox{if}\ \ \xi=0.\end{cases}
$$

Let $\mathcal{P}$ be the class of all polynomials of $\mathbb{R}^{n}$ and denote by $\mathcal{S}'_{0}:=\mathcal{S}/\mathcal{P}$ the tempered
distributions modulo polynomials. As a consequence, for any $f\in \mathcal{S}'_{0},$ we have
$$\sum_{q=-\infty}^{\infty}\Phi_{q}\ast f=f.$$

Next, we give the definition of homogeneous Besov spaces. To do this,
 we set
$$\dot{\Delta}_{q}f=\Phi_{q}\ast f,\ \ \ \ q=0,\pm1,\pm2,...$$

\begin{defn}\label{defn3.1}
For $s\in \mathbb{R}$ and $1\leq p,r\leq\infty,$ the homogeneous
Besov spaces $\dot{B}^{s}_{p,r}$ is defined by
$$\dot{B}^{s}_{p,r}=\{f\in \mathcal{S}'_{0}:\|f\|_{\dot{B}^{s}_{p,r}}<\infty\},$$
where
$$\|f\|_{\dot{B}^{s}_{p,r}}
=\begin{cases}\Big(\sum_{q\in\mathbb{Z}}(2^{qs}\|\dot{\Delta}_{q}f\|_{L^p})^{r}\Big)^{1/r},\
\ r<\infty, \\ \sup_{q\in\mathbb{Z}}
2^{qs}\|\dot{\Delta}_{q}f\|_{L^p},\ \ r=\infty.\end{cases}
$$\end{defn}

To define the inhomogeneous Besov spaces, we set $\Psi\in
\mathcal{C}_{0}^{\infty}(\mathbb{R}^{n})$ be even and satisfy
$$\mathcal{F}\Psi(\xi)=1-\sum_{q=0}^{\infty}\mathcal{F}\Phi_{q}(\xi).$$
It is clear that for any $f\in S'_{0}$, yields
$$\Psi*f+\sum_{q=0}^{\infty}\Phi_{q}\ast f=f.$$
We further set
$$\Delta_{q}f=\begin{cases}0,\ \ \ \ \ \ \ \, \ j\leq-2,\\
\Psi*f,\ \ \ j=-1,\cr \Phi_{q}\ast f, \ \ j=0,1,2,...,\end{cases}$$
which leads to the definition of inhomogeneous Besov spaces.

\begin{defn}\label{defn3.2}
For $s\in \mathbb{R}$ and $1\leq p,r\leq\infty,$ the inhomogeneous
Besov spaces $B^{s}_{p,r}$ is defined by
$$B^{s}_{p,r}=\{f\in \mathcal{S}':\|f\|_{B^{s}_{p,r}}<\infty\},$$
where
$$\|f\|_{B^{s}_{p,r}}
=\begin{cases}\Big(\sum_{q=-1}^{\infty}(2^{qs}\|\Delta_{q}f\|_{L^p})^{r}\Big)^{1/r},\
\ r<\infty, \\ \sup_{q\geq-1} 2^{qs}\|\Delta_{q}f\|_{L^p},\ \
r=\infty.\end{cases}$$
\end{defn}

For convenience of reader, we present some useful facts as follows. The first one is the improved
Bernstein inequality, see, e.g., \cite{W}.

\begin{lem}\label{lem2.1}
Let $0<R_{1}<R_{2}$ and $1\leq a\leq b\leq\infty$.
\begin{itemize}
\item [(i)] If $\mathrm{Supp}\mathcal{F}f\subset \{\xi\in \mathbb{R}^{n}: |\xi|\leq
R_{1}\lambda\}$, then
\begin{eqnarray*}
\|\Lambda^{\alpha}f\|_{L^{b}}
\lesssim \lambda^{\alpha+n(\frac{1}{a}-\frac{1}{b})}\|f\|_{L^{a}}, \ \  \mbox{for any}\ \  \alpha\geq0;
\end{eqnarray*}

\item [(ii)]If $\mathrm{Supp}\mathcal{F}f\subset \{\xi\in \mathbb{R}^{n}:
R_{1}\lambda\leq|\xi|\leq R_{2}\lambda\}$, then
\begin{eqnarray*}
\|\Lambda^{\alpha}f\|_{L^{a}}\approx\lambda^{\alpha}\|f\|_{L^{a}}, \ \  \mbox{for any}\ \ \alpha\in\mathbb{R}.
\end{eqnarray*}
\end{itemize}
\end{lem}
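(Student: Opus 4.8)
The final statement to prove is Lemma~\ref{lem2.1}, the improved Bernstein inequality. This is a standard result in Littlewood--Paley theory, so my plan is to reduce everything to the scaling structure of the Fourier support and a single convolution estimate via Young's inequality.

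The plan is as follows. First I would treat the low-frequency case (i), where $\mathrm{Supp}\,\mathcal{F}f\subset\{|\xi|\le R_1\lambda\}$. The idea is to write $\Lambda^\alpha f$ as a convolution with a kernel built from a smooth cutoff. Choose $\chi\in\mathcal{C}_0^\infty(\mathbb{R}^n)$ equal to $1$ on $\{|\xi|\le R_1\}$ and supported in a slightly larger ball. Then on the support of $\mathcal{F}f$ we have $|\xi|^\alpha\mathcal{F}f(\xi)=|\xi|^\alpha\chi(\xi/\lambda)\mathcal{F}f(\xi)$, so that
\begin{equation}
\Lambda^\alpha f=\mathcal{F}^{-1}\bigl(|\xi|^\alpha\chi(\xi/\lambda)\bigr)\ast f=:g_{\lambda,\alpha}\ast f.\nonumber
\end{equation}
A change of variables $\xi=\lambda\zeta$ shows $g_{\lambda,\alpha}(x)=\lambda^{n+\alpha}g_{1,\alpha}(\lambda x)$ where $g_{1,\alpha}=\mathcal{F}^{-1}(|\zeta|^\alpha\chi(\zeta))$ is a fixed Schwartz-class kernel (its $L^r$ norms are finite constants depending only on $\alpha,\chi$). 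Then by Young's inequality with $1+1/b=1/a+1/r$,
\begin{equation}
\|\Lambda^\alpha f\|_{L^b}\le\|g_{\lambda,\alpha}\|_{L^r}\|f\|_{L^a}=\lambda^{n+\alpha}\lambda^{-n/r}\|g_{1,\alpha}\|_{L^r}\|f\|_{L^a}=C\lambda^{\alpha+n(1/a-1/b)}\|f\|_{L^a},\nonumber
\end{equation}
which is exactly the claimed bound. The constraint $\alpha\ge0$ guarantees $|\zeta|^\alpha\chi(\zeta)$ is smooth (including at the origin), so $g_{1,\alpha}$ is genuinely Schwartz and its $L^r$ norm finite.

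For the high-frequency case (ii), where the support lies in an annulus $R_1\lambda\le|\xi|\le R_2\lambda$, the same scaling argument works but now also for $\alpha<0$, since $|\zeta|^\alpha$ is smooth and bounded away from singularity on the annulus. To get the two-sided equivalence $\|\Lambda^\alpha f\|_{L^a}\approx\lambda^\alpha\|f\|_{L^a}$ I would prove both inequalities: the upper bound $\lesssim\lambda^\alpha\|f\|_{L^a}$ follows as above (taking $a=b$, so $r=1$, using a cutoff $\tilde\chi$ supported in the annulus and equal to $1$ there); the reverse bound follows by applying the upper bound to $\Lambda^{-\alpha}(\Lambda^\alpha f)$, noting that $\Lambda^\alpha f$ has Fourier support in the same annulus, giving $\|f\|_{L^a}=\|\Lambda^{-\alpha}\Lambda^\alpha f\|_{L^a}\lesssim\lambda^{-\alpha}\|\Lambda^\alpha f\|_{L^a}$.

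The only mildly delicate point --- and the step I would flag as the main obstacle --- is verifying that the rescaled kernels $g_{1,\alpha}=\mathcal{F}^{-1}(|\zeta|^\alpha\chi(\zeta))$ have finite $L^r$ norms uniformly, i.e.\ confirming the multiplier is smooth and compactly supported so its inverse Fourier transform decays rapidly. In case (i) the issue is smoothness of $|\zeta|^\alpha$ at the origin, which is ensured precisely by $\alpha\ge0$; in case (ii) the annular support avoids the origin entirely, so any real $\alpha$ is admissible. Everything else is the scaling computation, which is routine once the kernel estimate is in hand.
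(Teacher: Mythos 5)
The paper offers no proof of this lemma at all: it is quoted as a known result with a citation to \cite{W}, so your proposal has to stand on its own. Part (ii) of your argument is correct and standard: on an annulus the multiplier $|\zeta|^{\alpha}\tilde\chi(\zeta)$ is genuinely $\mathcal{C}_0^{\infty}$ for every real $\alpha$, so the kernel is Schwartz, Young's inequality gives the upper bound, and applying it to $\Lambda^{-\alpha}(\Lambda^{\alpha}f)$ (whose Fourier support lies in the same annulus) gives the reverse bound.

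Part (i), however, contains a genuine gap, located exactly at the point you flagged and then dismissed. The claim that ``$\alpha\geq 0$ guarantees $|\zeta|^{\alpha}\chi(\zeta)$ is smooth, including at the origin'' is false unless $\alpha$ is an even integer: $|\zeta|$ is not differentiable at $\zeta=0$, and $|\zeta|^{1/2}$ is merely H\"older continuous there. Consequently $g_{1,\alpha}=\mathcal{F}^{-1}\bigl(|\zeta|^{\alpha}\chi(\zeta)\bigr)$ is \emph{not} Schwartz for general $\alpha\geq0$; this failure of smoothness at the origin is precisely why the fractional (``improved'') Bernstein inequality is a nontrivial result credited to \cite{W}, rather than the textbook statement for integer derivatives $\partial^{\beta}$, whose symbols $\zeta^{\beta}$ are polynomials. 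The conclusion you need, $g_{1,\alpha}\in L^{r}$ for all $1\leq r\leq\infty$, is still true for $\alpha>0$, but it must be proved: the kernel decays only like $|x|^{-n-\alpha}$, which is integrable at infinity exactly because $\alpha>0$. One standard route is to split the multiplier dyadically near the origin, writing $|\zeta|^{\alpha}\chi(\zeta)$ as a sum over $j\leq 0$ of pieces supported where $|\zeta|\sim 2^{j}$, each of which is smooth with inverse Fourier transform bounded by $2^{j(n+\alpha)}(1+2^{j}|x|)^{-M}$, and then sum. Alternatively, and more cheaply, deduce (i) from your already-proved (ii): decompose $f=\sum_{j}\dot{\Delta}_{j}f$, where only the blocks with $2^{j}\lesssim\lambda$ are nonzero, apply (ii) together with the classical smooth-cutoff Bernstein inequality to each block to get $\|\Lambda^{\alpha}\dot{\Delta}_{j}f\|_{L^{b}}\lesssim 2^{j(\alpha+n(\frac{1}{a}-\frac{1}{b}))}\|f\|_{L^{a}}$, and sum the geometric series; it converges to $\lambda^{\alpha+n(\frac{1}{a}-\frac{1}{b})}\|f\|_{L^{a}}$ because the exponent is $\geq\alpha\geq0$, the degenerate case $\alpha=0$, $a=b$ being trivial. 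As written, though, your case (i) rests on a false smoothness assertion at its key step, so the proof is incomplete.
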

As a consequence of the above inequality, we have
$$\|\Lambda^{\alpha} f\|_{B^s_{p, r}}\lesssim\|f\|_{B^{s +\alpha}_{p,
r}} \ (\alpha\geq0); \ \ \ \|\Lambda^{\alpha} f\|_{\dot{B}^s_{p, r}}\approx \|f\|_{\dot{B}^{s+\alpha}_{p, r}} \ (\alpha\in\mathbb{R}).$$
Below are basic embedding properties in Besov spaces.
\begin{lem}\label{lem2.2} Let $s\in \mathbb{R}$ and $1\leq
p,r\leq\infty$. Then
\begin{itemize}
\item[(1)] $\dot{B}^{0}_{p,1}\hookrightarrow L^p\hookrightarrow\dot{B}^{0}_{p,\infty},\ \  \dot{B}^{0}_{p,1}\hookrightarrow B^{0}_{p,1}$;
\item[(2)]$B^{s}_{p,r}=L^{p}\cap \dot{B}^{s}_{p,r} (s>0);$
\item[(3)]$B^{s}_{p,r}\hookrightarrow
B^{\tilde{s}}_{p,\tilde{r}}$ whenever $\tilde{s}< s$ or  $\tilde{s}=s$ and $r\leq\tilde{r}$;
\item[(4)]$\dot{B}^{s}_{p,r}\hookrightarrow \dot{B}^{s-n(\frac{1}{p}-\frac{1}{\tilde{p}})}_{\tilde{p},r}
$ and $B^{s}_{p,r}\hookrightarrow
B^{s-n(\frac{1}{p}-\frac{1}{\tilde{p}})}_{\tilde{p},r}$ whenever $p\leq\tilde{p}$;
\item[(5)]$\dot{B}^{n/p}_{p,1}\hookrightarrow\mathcal{C}_{0},\ \ B^{n/p}_{p,1}\hookrightarrow\mathcal{C}_{0}(1\leq p<\infty),$
where $\mathcal{C}_{0}$ is the space of continuous bounded functions
which decay at infinity.
\end{itemize}
\end{lem}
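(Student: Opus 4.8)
These are all classical facts about Besov spaces, and a fully self-contained treatment can be found in \cite{BCD}; the plan is to derive each item directly from the dyadic definitions (Definitions \ref{defn3.1}--\ref{defn3.2}) together with the Bernstein inequality (Lemma \ref{lem2.1}) and elementary inequalities for $\ell^r$ sequence spaces. For (1), I would expand $f=\sum_{q}\dot{\Delta}_{q}f$ and use the triangle inequality in $L^p$ to obtain $\|f\|_{L^p}\le\sum_{q}\|\dot{\Delta}_{q}f\|_{L^p}=\|f\|_{\dot{B}^{0}_{p,1}}$; conversely, Young's inequality $\|\dot{\Delta}_{q}f\|_{L^p}=\|\Phi_{q}\ast f\|_{L^p}\le\|\Phi_{q}\|_{L^1}\|f\|_{L^p}$ combined with the scale invariance $\|\Phi_{q}\|_{L^1}=\|\Phi_{0}\|_{L^1}$ gives $L^p\hookrightarrow\dot{B}^{0}_{p,\infty}$, and $\dot{B}^{0}_{p,1}\hookrightarrow B^{0}_{p,1}$ follows since the inhomogeneous low-frequency block $\Psi\ast f$ is a finite superposition of homogeneous blocks $\dot{\Delta}_{q}f$ with $q\lesssim 0$. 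For (2) with $s>0$, the inclusion $B^{s}_{p,r}\hookrightarrow L^p\cap\dot{B}^{s}_{p,r}$ is immediate from (1) together with the fact that the high-frequency tails of the two norms coincide; for the reverse, the low-frequency part of the homogeneous norm, namely $\big(\sum_{q<0}(2^{qs}\|\dot{\Delta}_{q}f\|_{L^p})^{r}\big)^{1/r}$, is dominated by $\|f\|_{L^p}$ using $2^{qs}\le 1$ for $q<0$ and summability of the geometric weight $2^{qsr}$, where the hypothesis $s>0$ is essential.

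Items (3) and (4) are purely about manipulating the defining series. For (3) with $\tilde{s}=s$ and $r\le\tilde{r}$ it is just the monotonicity $\ell^r\hookrightarrow\ell^{\tilde{r}}$; for $\tilde{s}<s$ I would chain $B^{s}_{p,r}\hookrightarrow B^{s}_{p,\infty}\hookrightarrow B^{\tilde{s}}_{p,1}\hookrightarrow B^{\tilde{s}}_{p,\tilde{r}}$, the middle embedding coming from $\sum_{q\ge-1}2^{q\tilde{s}}\|\Delta_{q}f\|_{L^p}\le\big(\sup_{q}2^{qs}\|\Delta_{q}f\|_{L^p}\big)\sum_{q\ge-1}2^{-q(s-\tilde{s})}$, where the geometric series converges precisely because $s-\tilde{s}>0$ and $q$ is bounded below. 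Item (4) is the Besov analogue of the Sobolev embedding: applying Lemma \ref{lem2.1}(i) (and its annular counterpart for the homogeneous blocks) with $a=p$, $b=\tilde{p}$ gives $\|\Delta_{q}f\|_{L^{\tilde{p}}}\lesssim 2^{qn(1/p-1/\tilde{p})}\|\Delta_{q}f\|_{L^p}$ on each block, hence $2^{q(s-n(1/p-1/\tilde{p}))}\|\Delta_{q}f\|_{L^{\tilde{p}}}\lesssim 2^{qs}\|\Delta_{q}f\|_{L^p}$, and taking $\ell^r$ norms in $q$ yields the claim; the homogeneous version is identical with $\dot{\Delta}_{q}$ and the sum ranging over all $q\in\mathbb{Z}$.

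Finally, for (5) I would first reduce to an $L^\infty$ bound by combining (4) with $\tilde{p}=\infty$ to get $\dot{B}^{n/p}_{p,1}\hookrightarrow\dot{B}^{0}_{\infty,1}$ and then (1) with $p=\infty$ to get $\dot{B}^{0}_{\infty,1}\hookrightarrow L^\infty$; the upgrade from mere boundedness to membership in $\mathcal{C}_{0}$ follows because each $\dot{\Delta}_{q}f$ is continuous and vanishes at infinity (its Fourier transform is compactly supported, so it is a Schwartz-class convolution), while the series $\sum_{q}\dot{\Delta}_{q}f$ converges uniformly in the $\dot{B}^{0}_{\infty,1}$-norm, making the limit a uniform limit of $\mathcal{C}_{0}$-functions and hence itself in $\mathcal{C}_{0}$; the inhomogeneous statement is the same with $B^{n/p}_{p,1}$ in place of $\dot{B}^{n/p}_{p,1}$. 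The steps requiring the most care are the homogeneous–inhomogeneous comparisons in (1)--(2), where one must weigh the single low-frequency block $\Psi\ast f$ against the infinitely many homogeneous blocks at negative frequencies and invoke $s>0$ to guarantee convergence; by contrast (3)--(5) are routine once Lemma \ref{lem2.1} and the elementary $\ell^r$ inequalities are in hand.
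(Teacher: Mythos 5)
The paper never proves this lemma: it is stated as a collection of classical facts with a pointer to \cite{BCD}, so a self-contained derivation from the dyadic definitions and Lemma \ref{lem2.1} is the right (and standard) route, and your items (1), (3) and (4) are handled correctly. Two bookkeeping slips: in (1), the low-frequency block $\Psi\ast f$ is an \emph{infinite} superposition of the homogeneous blocks $\dot{\Delta}_{q}f$ with $q\lesssim 1$, not a finite one, though the bound $\|\Psi\ast f\|_{L^p}\lesssim\sum_{q\lesssim 1}\|\dot{\Delta}_{q}f\|_{L^p}\leq\|f\|_{\dot{B}^{0}_{p,1}}$ goes through unchanged; and in (2) you have the two directions interchanged --- the inclusion that needs the geometric sum $\sum_{q<0}2^{qsr}<\infty$ is the \emph{forward} one $B^{s}_{p,r}\hookrightarrow L^{p}\cap\dot{B}^{s}_{p,r}$ (one must dominate the negative-frequency part of the homogeneous norm by $\|f\|_{L^p}\lesssim\|f\|_{B^{s}_{p,r}}$), whereas the reverse inclusion only requires Young's inequality for $\Psi$ and the coincidence of the blocks with $q\geq0$. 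Since all the estimates you list are the correct ones, this is mislabeling rather than a missing idea.

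The genuine gap is in (5). You justify that each block $\dot{\Delta}_{q}f$ lies in $\mathcal{C}_{0}$ by saying that its Fourier transform is compactly supported, ``so it is a Schwartz-class convolution.'' Compact spectral support alone does \emph{not} imply decay at infinity: $e^{ix\cdot\xi_{0}}$ has Fourier transform supported at a single point and does not decay, and a nonzero constant belongs to $B^{0}_{\infty,1}$ without decaying --- so your argument, as written, would equally ``prove'' the false embedding $B^{0}_{\infty,1}\hookrightarrow\mathcal{C}_{0}$, which is exactly the excluded endpoint $p=\infty$, $n/p=0$. The hypothesis $1\leq p<\infty$, which your proof never invokes, is what saves the statement: since $f\in\dot{B}^{n/p}_{p,1}$, each block satisfies $\dot{\Delta}_{q}f\in L^{p}$, and writing $\dot{\Delta}_{q}f=\widetilde{\Phi}_{q}\ast\dot{\Delta}_{q}f$ with a fattened Schwartz kernel $\widetilde{\Phi}_{q}$ whose Fourier transform equals $1$ on $2^{q}A_{0}$, one uses that the convolution of an $L^{p}$ function with an $L^{p'}$ function lies in $\mathcal{C}_{0}$ when $p<\infty$ (approximate the $L^{p}$ factor by compactly supported functions). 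With this repair your uniform-limit argument --- absolute convergence of $\sum_{q}\|\dot{\Delta}_{q}f\|_{L^{\infty}}$ from the $\dot{B}^{0}_{\infty,1}$ bound, so the partial sums converge uniformly and the limit stays in $\mathcal{C}_{0}$ --- is correct, and the same remark handles the low-frequency block in the inhomogeneous case, since $\Psi\ast f\in\mathcal{S}\ast L^{p}\subset\mathcal{C}_{0}$.
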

Let us state the Moser-type product estimates, which plays an important role in the estimate of bilinear
terms.
\begin{prop}\label{prop2.1}
Let $s>0$ and $1\leq
p,r\leq\infty$. Then $\dot{B}^{s}_{p,r}\cap L^{\infty}$ is an algebra and
$$
\|fg\|_{\dot{B}^{s}_{p,r}}\lesssim \|f\|_{L^{\infty}}\|g\|_{\dot{B}^{s}_{p,r}}+\|g\|_{L^{\infty}}\|f\|_{\dot{B}^{s}_{p,r}}.
$$
Let $s_{1},s_{2}\leq n/p$ such that $s_{1}+s_{2}>n\max\{0,\frac{2}{p}-1\}. $  Then one has
$$\|fg\|_{\dot{B}^{s_{1}+s_{2}-n/p}_{p,1}}\lesssim \|f\|_{\dot{B}^{s_{1}}_{p,1}}\|g\|_{\dot{B}^{s_{2}}_{p,1}}.$$
\end{prop}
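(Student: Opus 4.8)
The plan is to prove both estimates with Bony's paraproduct decomposition, writing
$$ fg \;=\; T_f g + T_g f + R(f,g), \qquad T_f g = \sum_{q} S_{q-1}f\,\dot{\Delta}_q g,\quad R(f,g)=\sum_{q}\sum_{|q-q'|\le 1}\dot{\Delta}_q f\,\dot{\Delta}_{q'}g, $$
where $S_{q-1}f=\sum_{j\le q-2}\dot{\Delta}_j f$. The governing structural fact is spectral localization: each paraproduct block $S_{q-1}f\,\dot{\Delta}_q g$ is supported in a dyadic annulus $\{|\xi|\approx 2^{q}\}$, so that $\dot{\Delta}_k(T_f g)$ is nonzero only for $|k-q|\le N_0$, whereas each remainder block $\dot{\Delta}_q f\,\dot{\Delta}_{q'}g$ is supported only in a ball $\{|\xi|\lesssim 2^{q}\}$. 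This dichotomy is exactly what dictates the two sets of hypotheses, and I would treat paraproducts and remainder separately in both parts.

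For the first estimate I would bound the paraproducts by H\"older and $\|S_{q-1}f\|_{L^\infty}\lesssim\|f\|_{L^\infty}$: the annulus localization gives $\|\dot{\Delta}_k(T_f g)\|_{L^p}\lesssim\|f\|_{L^\infty}\sum_{|q-k|\le N_0}\|\dot{\Delta}_q g\|_{L^p}$, and multiplying by $2^{ks}$ and taking the $\ell^r$ norm yields $\|T_f g\|_{\dot B^{s}_{p,r}}\lesssim\|f\|_{L^\infty}\|g\|_{\dot B^{s}_{p,r}}$, with $\|T_g f\|_{\dot B^{s}_{p,r}}\lesssim\|g\|_{L^\infty}\|f\|_{\dot B^{s}_{p,r}}$ by symmetry. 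For the remainder the ball localization forces the bound $2^{ks}\|\dot{\Delta}_k R(f,g)\|_{L^p}\lesssim\sum_{q\ge k-N_0}2^{(k-q)s}\big(2^{qs}\|\dot{\Delta}_q f\|_{L^\infty}\|\dot{\Delta}_{q'}g\|_{L^p}\big)$; here the hypothesis $s>0$ makes the weight $2^{(k-q)s}$ summable in the regime $q\ge k-N_0$, and a Young-type convolution inequality in $\ell^r$ closes the estimate. This is the step where $s>0$ is indispensable, and it is what forces the algebra property rather than a mere multiplier bound.

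For the second estimate I would run the same decomposition but track regularity in $\dot B^{s_1+s_2-n/p}_{p,1}$. In the paraproducts I would replace the $L^\infty$ bound on $S_{q-1}f$ by the Bernstein estimate $\|S_{q-1}f\|_{L^\infty}\lesssim\sum_{j\le q-2}2^{jn/p}\|\dot{\Delta}_j f\|_{L^p}$; the hypothesis $s_1\le n/p$ controls this sum by $2^{q(n/p-s_1)}\|f\|_{\dot B^{s_1}_{p,1}}$ (uniformly by $\|f\|_{\dot B^{n/p}_{p,1}}$ in the borderline case), which shifts the output regularity to precisely $s_1+s_2-n/p$, and symmetrically the other paraproduct needs $s_2\le n/p$. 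The remainder is the delicate piece and the genuine obstacle. I would estimate each block by H\"older into an intermediate integrability $\bar p\ge 1$, combined with the Bernstein inequality of Lemma \ref{lem2.1} to boost the inputs from $L^p$; for $p\ge 2$ one takes $\bar p=p/2$ and obtains the threshold $s_1+s_2>0$, whereas for $1\le p<2$ one is forced to take $\bar p=1$ (via an $L^2\times L^2$ H\"older step), which produces exactly the threshold $s_1+s_2>n(2/p-1)$. Summing the ball-supported blocks in $\dot B^{\,\cdot}_{\bar p,1}$ under this positivity condition and then applying the Sobolev-type embedding $\dot B^{\sigma}_{\bar p,1}\hookrightarrow\dot B^{\sigma-n(1/\bar p-1/p)}_{p,1}$ of Lemma \ref{lem2.2}(4) lands the remainder in $\dot B^{s_1+s_2-n/p}_{p,1}$, yielding the unified condition $s_1+s_2>n\max\{0,2/p-1\}$.

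I expect the remainder to be the crux throughout, and especially the case $p<2$. The underlying reason is that the Littlewood--Paley projections $\dot{\Delta}_k$ are bounded on $L^{\bar p}$ only for $\bar p\ge 1$, so the naive route through $L^{p/2}$ is unavailable once $p<2$; detouring through $L^{1}$ is what costs the extra $n(2/p-1)$ and is the one place demanding real care rather than routine dyadic summation. The ball-type support of the remainder blocks, which lets arbitrarily high input frequencies feed a fixed low output frequency, is the common source of the positivity requirements $s>0$ and $s_1+s_2>n\max\{0,2/p-1\}$ in the two parts.
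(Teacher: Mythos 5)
Your proposal is correct: the Bony decomposition $fg=T_fg+T_gf+R(f,g)$, the annulus/ball localization dichotomy, the use of $s>0$ (resp.\ $s_1+s_2>n\max\{0,\frac{2}{p}-1\}$) to sum the ball-supported remainder blocks, and the Bernstein/embedding bookkeeping that produces the output regularity $s_1+s_2-n/p$ are all accurate, including the genuinely delicate detour through $L^1$ via an $L^2\times L^2$ H\"older step when $1\leq p<2$. Note, however, that the paper itself offers no proof of Proposition \ref{prop2.1}: it is quoted as a standard Moser-type product estimate with the reader referred to \cite{BCD}, and your argument is essentially the proof found in that reference, so there is nothing in the paper to diverge from.
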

Additionally, we also state a result of continuity for the composition function.
\begin{prop}\label{prop2.2}
Let $s>0$, $1\leq p, r\leq \infty$ and $F'\in
W^{[s]+1,\infty}_{loc}(I;\mathbb{R})$. Assume that $v\in \dot{B}^{s}_{p,r}\cap
L^{\infty},$ then $F(v)\in \dot{B}^{s}_{p,r}$ and
$$\|F(v)\|_{\dot{B}^{s}_{p,r}}\lesssim
(1+\|v\|_{L^{\infty}})^{n}\|F'\|_{W^{[s]+1,\infty}(I)}\|v\|_{\dot{B}^{s}_{p,r}}.$$
\end{prop}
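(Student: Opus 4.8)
The plan is to follow the paradifferential/telescoping method of \cite{BCD}. Since $\dot{B}^{s}_{p,r}$ is defined modulo polynomials, I may first subtract the constant $F(0)$ and thereby assume $F(0)=0$ without affecting either side of the claimed inequality (note that $\|F'\|_{W^{[s]+1,\infty}}$ is unchanged). Writing $S_{q}v:=\sum_{q'<q}\dot{\Delta}_{q'}v$ for the low-frequency truncation, so that $S_{q+1}v=S_{q}v+\dot{\Delta}_{q}v$, the hypothesis $v\in\dot{B}^{s}_{p,r}\cap L^{\infty}$ with $s>0$ makes the telescoping identity
\begin{equation*}
F(v)=\sum_{q\in\mathbb{Z}}\big(F(S_{q+1}v)-F(S_{q}v)\big)
\end{equation*}
converge in $\mathcal{S}'_{0}$; this convergence at low and high frequencies is the standard (and only mildly delicate) point in the homogeneous setting.

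By the fundamental theorem of calculus each summand factors as $F(S_{q+1}v)-F(S_{q}v)=m_{q}\,\dot{\Delta}_{q}v$, where $m_{q}:=\int_{0}^{1}F'\big(S_{q}v+\tau\dot{\Delta}_{q}v\big)\,d\tau$. The problem is thus reduced to estimating $\|\sum_{q}\dot{\Delta}_{j}(m_{q}\dot{\Delta}_{q}v)\|_{L^{p}}$, weighted by $2^{js}$ and measured in $\ell^{r}_{j}$. The essential input is a uniform control of the low-order derivatives of the multipliers $m_{q}$: for every multi-index $\beta$ with $|\beta|\le[s]+1$, I would use the Fa\`{a} di Bruno formula together with the Bernstein inequality of Lemma \ref{lem2.1} (each derivative hitting $S_{q}v$ or $\dot{\Delta}_{q}v$ costing a factor $2^{q}$) to obtain a bound of the form $\|\partial^{\beta}m_{q}\|_{L^{\infty}}\lesssim C_{\beta}\,2^{q|\beta|}$, where $C_{\beta}$ depends only on $\|F'\|_{W^{[s]+1,\infty}}$ and $\|v\|_{L^{\infty}}$; tracking this dependence through the chain-rule expansion yields the polynomial weight $(1+\|v\|_{L^{\infty}})^{n}$ recorded in the statement.

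With these bounds in hand, the final step is to split the sum over $q$ at the frequency $q=j$. For $q\ge j$ I estimate crudely by $\|\dot{\Delta}_{j}(m_{q}\dot{\Delta}_{q}v)\|_{L^{p}}\lesssim\|F'\|_{L^{\infty}}\|\dot{\Delta}_{q}v\|_{L^{p}}$, while for $q<j$ I gain the negative power $2^{-j([s]+1)}$ from the spectral localization of $\dot{\Delta}_{j}$ (integrating by parts $[s]+1$ times) and trade it against the derivative bounds above, producing a factor $2^{(q-j)([s]+1)}$. After multiplying by $2^{js}$ each region becomes a discrete convolution of the sequence $\{2^{qs}\|\dot{\Delta}_{q}v\|_{L^{p}}\}_{q}\in\ell^{r}$ with a summable geometric sequence—convergent precisely because $0<s<[s]+1$—so Young's inequality for series closes the estimate and returns $\|v\|_{\dot{B}^{s}_{p,r}}$. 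I expect the Fa\`{a} di Bruno bookkeeping in the middle step to be the main obstacle: one must control the high-frequency pieces of $m_{q}$ uniformly in $q$ while keeping the dependence on $\|F'\|_{W^{[s]+1,\infty}}$ and $\|v\|_{L^{\infty}}$ explicit, and it is exactly the requirement of $[s]+1$ derivatives there that forces the hypothesis $F'\in W^{[s]+1,\infty}_{loc}$.
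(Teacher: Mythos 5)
The paper does not prove this proposition at all: it is quoted as a known continuity result for composition operators (the reader being referred to \cite{BCD} for the underlying Littlewood--Paley theory), so there is no in-paper argument to compare against. Your sketch is precisely the standard proof of that cited result --- Meyer's first linearization $F(v)=\sum_{q}\bigl(F(S_{q+1}v)-F(S_{q}v)\bigr)$, the factorization $F(S_{q+1}v)-F(S_{q}v)=m_{q}\dot{\Delta}_{q}v$, Fa\`a di Bruno/Bernstein bounds $\|\partial^{\beta}m_{q}\|_{L^{\infty}}\lesssim 2^{q|\beta|}$, and the split of $\sum_{q}\dot{\Delta}_{j}(m_{q}\dot{\Delta}_{q}v)$ at $q=j$ with geometric gains $2^{(j-q)s}$ and $2^{(q-j)([s]+1-s)}$ closed by Young's inequality for series --- and it is correct in outline, with the two points you flag (convergence of the telescoping sum in $\mathcal{S}'_{0}$, which is where working modulo polynomials and the subtraction of $F(0)$ matter, and the bookkeeping of constants) being exactly the places a complete write-up must be careful. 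One cosmetic remark: the chain-rule tracking naturally produces the exponent $[s]+1$ (the number of derivatives distributed by Fa\`a di Bruno) on the factor $1+\|v\|_{L^{\infty}}$, not the space dimension $n$; the paper's statement writes $n$, so your deferral to ``the weight recorded in the statement'' papers over a discrepancy that sits in the proposition itself rather than in your argument.
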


Finally, for completeness, we present the definition of inhomogeneous space-time Besov spaces to end this section, which is used in Theorem \ref{thm1.1},
see \cite{CL} or \cite{BCD} for more details.

\begin{defn}\label{defn2.3}
For $T>0, s\in\mathbb{R}, 1\leq r,\theta\leq\infty$, the
inhomogeneous mixed time-space Besov spaces
$\widetilde{L}^{\theta}_{T}(B^{s}_{p,r})$ is defined by
$$\widetilde{L}^{\theta}_{T}(B^{s}_{p,r}):
=\{f\in
L^{\theta}(0,T;\mathcal{S}'):\|f\|_{\widetilde{L}^{\theta}_{T}(B^{s}_{p,r})}<+\infty\},$$
where
$$\|f\|_{\widetilde{L}^{\theta}_{T}(B^{s}_{p,r})}:=\Big(\sum_{q\geq-1}(2^{qs}\|\Delta_{q}f\|_{L^{\theta}_{T}(L^{p})})^{r}\Big)^{\frac{1}{r}}$$
with the usual convention if $r=\infty$.
\end{defn}

Furthermore, we set
$$\widetilde{\mathcal{C}}_{T}(B^{s}_{p,r}):=\widetilde{L}^{\infty}_{T}(B^{s}_{p,r})\cap\mathcal{C}([0,T],B^{s}_{p,r})
$$ and $$\widetilde{\mathcal{C}}^1_{T}(B^{s}_{p,r}):=\{f\in\mathcal{C}^1([0,T],B^{s}_{p,r})|\partial_{t}f\in\widetilde{L}^{\infty}_{T}(B^{s}_{p,r})\},$$
where the index $T>0$ will be omitted when $T=+\infty$.

\section{The L-P pointwise energy estimates}\setcounter{equation}{0}\label{sec:3}
In this section, we develop the L-P pointwise energy estimates and deduce the decay property for the linearized system (\ref{R-E4}).
Set
$$\tilde{w}:=(\sigma_{e},u_{e},\sigma_{i},u_{i},E).$$

\begin{prop}\label{prop3.1}
If $\tilde{w}_{0}\in \dot{B}^{\varrho}_{2,1}(\mathbb{R}^{3})\cap \dot{B}^{-s}_{2,\infty}(\mathbb{R}^{3})$ for $\varrho\geq0$ and $s>0$, then the solutions $\tilde{w}(t,x)$ of (\ref{R-E4}) has the decay estimate
\begin{equation}
\|\Lambda^{\ell}\tilde{w}\|_{B_{2,1}^{\varrho-\ell}}\lesssim \|\tilde{w}_{0}\|_{\dot{B}_{2,1}^{\varrho}\cap \dot{B}_{2,\infty}^{-s}}(1+t)^{-\frac{\ell+s}{2}} \label{R-E18}
\end{equation}
for $0\leq\ell\leq\varrho$. In particular, if $\tilde{w}_{0}\in \dot{B}^{\varrho}_{2,1}(\mathbb{R}^{3})\cap L^p(\mathbb{R}^{3})(1\leq p<2$), one further has
\begin{equation}
\|\Lambda^{\ell}\tilde{w}\|_{B_{2,1}^{\varrho-\ell}}\lesssim \|\tilde{w}_{0}\|_{\dot{B}_{2,1}^{\varrho}\cap L^{p}}(1+t)^{-\frac{3}{2}(\frac{1}{p}-\frac{1}{2})-\frac{\ell}{2}} \label{R-E19}
\end{equation}
for $0\leq\ell\leq\varrho$.
\end{prop}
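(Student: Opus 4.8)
The plan is to read off the Besov decay directly from the pointwise Fourier bound already obtained for the linearized system (\ref{R-E4}), namely
$$|\hat{\tilde{w}}(\xi,t)|\leq|\hat{\tilde{w}}_0(\xi)|\,e^{-c_0\eta(\xi)t},\qquad \eta(\xi)=\frac{|\xi|^2}{1+|\xi|^2},$$
which is exactly (\ref{R-E999}) and encodes the crucial fact that irrotationality restores the Shizuta--Kawashima type dissipation. This is the only input coming from the equation; the rest is harmonic analysis plus one scalar sum estimate. First I would localize in frequency: applying $\dot{\Delta}_q$ and Plancherel's theorem gives, for every $q\in\mathbb{Z}$,
$$\|\dot{\Delta}_q\tilde{w}(t)\|_{L^2}\lesssim e^{-c_0\eta_q t}\|\dot{\Delta}_q\tilde{w}_0\|_{L^2},\qquad \eta_q:=\inf_{\xi\in A_q}\eta(\xi),$$
where, since $\eta$ is increasing in $|\xi|$ and $|\xi|\approx 2^q$ on $A_q$, one has $\eta_q\approx 2^{2q}$ for $q\leq0$ and $\eta_q\approx1$ for $q\geq0$. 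I then split $\|\Lambda^{\ell}\tilde{w}\|_{B^{\varrho-\ell}_{2,1}}$ into the low block $q=-1$ (spectrum in $|\xi|\lesssim1$) and the high blocks $q\geq0$.

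For the high-frequency part, Bernstein's inequality (Lemma \ref{lem2.1}(ii)) gives $\|\Delta_q\Lambda^{\ell}\tilde{w}\|_{L^2}\approx 2^{q\ell}\|\Delta_q\tilde{w}\|_{L^2}$, and the uniform factor $e^{-c_0 t}$ yields
$$\sum_{q\geq0}2^{q(\varrho-\ell)}\|\Delta_q\Lambda^{\ell}\tilde{w}\|_{L^2}\lesssim e^{-c_0 t}\sum_{q\geq0}2^{q\varrho}\|\Delta_q\tilde{w}_0\|_{L^2}\lesssim e^{-c_0 t}\|\tilde{w}_0\|_{\dot{B}^{\varrho}_{2,1}}.$$
Since $e^{-c_0 t}\lesssim(1+t)^{-(\ell+s)/2}$ for any fixed exponent, this contribution decays faster than required and is harmless.

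For the low-frequency part I bound $\|\Delta_{-1}\Lambda^{\ell}\tilde{w}\|_{L^2}$ by $\sum_{q\leq0}2^{q\ell}\|\dot{\Delta}_q\tilde{w}\|_{L^2}$ (up to finitely many $O(1)$ blocks, which decay exponentially as above), insert the localized decay, and use the hypothesis $\|\dot{\Delta}_q\tilde{w}_0\|_{L^2}\leq 2^{qs}\|\tilde{w}_0\|_{\dot{B}^{-s}_{2,\infty}}$ to reach
$$\|\Delta_{-1}\Lambda^{\ell}\tilde{w}\|_{L^2}\lesssim\|\tilde{w}_0\|_{\dot{B}^{-s}_{2,\infty}}\sum_{q\leq0}2^{q(\ell+s)}e^{-c\,2^{2q}t}.$$
The main obstacle, and the heart of the whole argument, is the scalar dyadic sum $\sum_{q\leq0}2^{q(\ell+s)}e^{-c\,2^{2q}t}$, which must be shown to be $\lesssim(1+t)^{-(\ell+s)/2}$; this is precisely where the exponential-in-frequency dissipation is converted into the polynomial heat-type rate. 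I would treat $t\leq1$ by the convergent geometric series (using $\ell+s>0$) and $t\geq1$ by comparing the sum to $\int_0^1 x^{\ell+s}e^{-cx^2 t}\,\frac{dx}{x}$ and rescaling $y=x\sqrt{t}$, which extracts the factor $t^{-(\ell+s)/2}$ times a convergent Gamma-type integral. Combining the two frequency regimes proves (\ref{R-E18}).

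Finally, (\ref{R-E19}) follows from (\ref{R-E18}) by the embedding chain $L^p\hookrightarrow\dot{B}^0_{p,\infty}\hookrightarrow\dot{B}^{-n(1/p-1/2)}_{2,\infty}$ supplied by Lemma \ref{lem2.2}(1),(4). Taking $n=3$ and $s=3(1/p-1/2)=2\gamma_{p,2}$, so that $\tilde{w}_0\in\dot{B}^{-s}_{2,\infty}$ with $\|\tilde{w}_0\|_{\dot{B}^{-s}_{2,\infty}}\lesssim\|\tilde{w}_0\|_{L^p}$, the rate $(1+t)^{-(\ell+s)/2}$ becomes $(1+t)^{-\gamma_{p,2}-\ell/2}$, which is exactly the claimed estimate.
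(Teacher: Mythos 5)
Your proposal is correct, but it takes a genuinely different route from the paper in the decisive low-frequency step. Both arguments share the same starting point (the dissipative structure created by irrotationality, i.e.\ the Lyapunov bound with rate $\eta(\xi)=|\xi|^2/(1+|\xi|^2)$) and treat the high frequencies identically, via uniform exponential decay summed against $\|\tilde{w}_{0}\|_{\dot{B}^{\varrho}_{2,1}}$. The difference is how the algebraic rate $(1+t)^{-(\ell+s)/2}$ is extracted at low frequencies. You split the block $\Delta_{-1}$ into homogeneous blocks $\dot{\Delta}_{q}$, $q\leq 0$, apply the heat-type bound $e^{-c2^{2q}t}$ on each, weight by $2^{qs}$ from the $\dot{B}^{-s}_{2,\infty}$ data norm, and convert exponential-in-frequency decay into a polynomial rate through the scalar dyadic sum $\sum_{q\leq0}2^{q(\ell+s)}e^{-c2^{2q}t}\lesssim(1+t)^{-(\ell+s)/2}$ — the classical semigroup/spectral argument written in Littlewood--Paley form, which is legitimate here because the linearized system has constant coefficients, so localization commutes with the flow and the pointwise bound (\ref{R-E999}) applies block by block. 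The paper instead never sums over low-frequency blocks: it stays with the single block $\Delta_{-1}$, multiplies the localized Lyapunov inequality (\ref{R-E39}) by $|\xi|^{2\ell}$, and combines the resulting differential inequality (\ref{R-E40}) with the interpolation inequality of Lemma \ref{lem8.2} and the uniform-in-time bound $\|\tilde{w}(t)\|_{\dot{B}^{-s}_{2,\infty}}\leq\|\tilde{w}_{0}\|_{\dot{B}^{-s}_{2,\infty}}$ of (\ref{R-E42}) to obtain a nonlinear ODE (\ref{R-E43}) whose solution decays like $(1+t)^{-(\ell+s)/2}$. Your route is more elementary and transparent, but it leans on having a pointwise Fourier bound for the solution operator; the paper's interpolation-plus-Gronwall mechanism uses only energy inequalities, which is exactly the ``less spectral analysis'' framework the authors advertise and the reason their low-frequency scheme transplants to settings (such as the nonlinear time-weighted estimates of Sect.~\ref{sec:4}) where pointwise semigroup bounds are not available. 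Your derivation of (\ref{R-E19}) via the embedding chain $L^{p}\hookrightarrow\dot{B}^{0}_{p,\infty}\hookrightarrow\dot{B}^{-3(1/p-1/2)}_{2,\infty}$ coincides in substance with the paper's appeal to Lemma \ref{lem8.5}.
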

\begin{proof}
Applying the inhomogeneous localization operator $\Delta_{q}(q\geq-1)$ to (\ref{R-E4}) gives
\begin{equation}
\left\{
\begin{array}{l}
\partial_{t} \Delta_{q}\sigma_{e}+\mathrm{div}\Delta_{q}u_{e}=0,\\
\partial_{t} \Delta_{q}u_{e}+\nabla\Delta_{q}\sigma_{e}+\Delta_{q}u_{e}-\Delta_{q}E=0,\\
\partial_{t} \Delta_{q}\sigma_{i}+\mathrm{div}\Delta_{q}u_{i}=0,\\
\partial_{t} \Delta_{q}u_{i}+\nabla\Delta_{q}\sigma_{i}+\Delta_{q}u_{i}+\Delta_{q}E=0,\\
\mathrm{div} \Delta_{q}E=\Delta_{q}\sigma_{e}-\Delta_{q}\sigma_{i}.
 \end{array}
\right.\label{R-E20}
\end{equation}

Next, by performing the Fourier transform  and then taking the inner product with $(\widehat{\Delta_{q}\sigma_{e}},\widehat{\Delta_{q}u_{e}},\widehat{\Delta_{q}\sigma_{i}},\widehat{\Delta_{q}u_{i}})$ respectively, we arrive at
\begin{eqnarray}
\frac{1}{2}\frac{d}{dt}|\widehat{\Delta_{q}w}|^2+(|\widehat{\Delta_{q}u_{e}}|^2+|\widehat{\Delta_{q}u_{i}}|^2)-\langle\widehat{\Delta_{q}E}, \widehat{\Delta_{q}u_{e}}-\widehat{\Delta_{q}u_{i}}\rangle=0, \label{R-E21}
\end{eqnarray}
where $w:=(\sigma_{e},u_{e},\sigma_{i},u_{i})$ and $\langle\cdot,\cdot\rangle$ denotes the inner product in $\mathbb{C}^{3}$.

For the term related the electron field $E$ of (\ref{R-E21}),  we have
\begin{eqnarray}
&&-\langle\widehat{\Delta_{q}E}, \widehat{\Delta_{q}u_{e}}-\widehat{\Delta_{q}u_{i}}\rangle\nonumber\\
&=& -\langle\widehat{\Delta_{q}\nabla\mathit\Phi}, \widehat{\Delta_{q}u_{e}}-\widehat{\Delta_{q}u_{i}}\rangle
\nonumber\\
&=& \langle\widehat{\Delta_{q}\mathit\Phi}, \widehat{\Delta_{q}\mathrm{div}u_{e}}-\widehat{\Delta_{q}\mathrm{div}u_{i}}\rangle
\nonumber\\
&=&  -\langle\widehat{\Delta_{q}\mathit\Phi},\widehat{\Delta_{q}\mathrm{div}E_{t}}\rangle
\nonumber\\
&=& \frac{1}{2}\frac{d}{dt} |\widehat{\Delta_{q}E}|^2. \label{R-E22}
\end{eqnarray}
Hence, it follows from (\ref{R-E21})-(\ref{R-E22}) that
\begin{eqnarray}
\frac{1}{2}\frac{d}{dt}|\widehat{\Delta_{q}\tilde{w}}|^2+(|\widehat{\Delta_{q}u_{e}}|^2+|\widehat{\Delta_{q}u_{i}}|^2)=0. \label{R-E23}
\end{eqnarray}

In order to create the desired dissipative inequality, we need to
rewrite (\ref{R-E4}) into the matrix form. Precisely,
\begin{equation}\partial_{t}w+\sum_{j=1}^{3}A_{j}(0)\partial_{x_{j}}w+Lw=G,\label{R-E24}
\end{equation}
with the coupled dynamic
field equation
\begin{equation}
\mathrm{div} E=\sigma_{e}-\sigma_{i}, \label{R-E25}
\end{equation}
where
$$ A_{j}(0):=\left(
                     \begin{array}{cccc}
                       0 & e_{j}^\top & 0 & 0 \\
                       e_{j} & 0 & 0 & 0 \\
                       0 & 0 & 0 & e_{j}^\top \\
                       0 & 0 & e_{j} & 0 \\
                     \end{array}
                   \right),\  L:=\left(
                     \begin{array}{cccc}
                       0 & 0 & 0 & 0 \\
                       0 & I_{3} & 0 & 0 \\
                       0 & 0 & 0 & 0 \\
                       0 & 0 & 0 & I_{3} \\
                     \end{array}
                   \right),\ G:=\left(
                                          \begin{array}{c}
                                            0 \\
                                            E \\
                                            0 \\
                                            -E \\
                                          \end{array}
                                        \right).$$
Note that $I_{3}$ is the unit matrix and $e_{j}$ is
$3$-dimensional vector where the $j$th component is one, others are
zero.

Now, we formulate a stability lemma, which has been well established
by the second author in \cite{SK} for generally hyperbolic-parabolic composite systems, and
sometimes referred to as the ``Shizuta-Kawashima condition".
\begin{lem}[Shizuta-Kawashima] \label{lem3.1}For all ~$\xi\in \mathbb{R}^{3},\
\xi\neq0$, there exists a real skew-symmetric smooth matrix $K(\xi)$
which is defined in the unit sphere $\mathbb{S}^{2}$:
\begin{eqnarray*}
K(\xi)=\left(%
\begin{array}{cccc}
 0 & \frac{\xi^\top}{|\xi|} & 0 & 0\\
  -\frac{\xi}{|\xi|} & 0 & 0 &0\\
0&0&0&\frac{\xi^\top}{|\xi|}\\
0&0&-\frac{\xi}{|\xi|}&0\\
\end{array}%
\right),
\end{eqnarray*}
such that
\begin{eqnarray}
K(\xi)\sum_{j=1}^{3}\xi_{j}A_{j}(0)=\left(%
\begin{array}{cccc}
  |\xi| & 0 & 0 & 0\\
  0 & -\frac{\xi\otimes\xi}{|\xi|} & 0 & 0\\
  0 & 0 & |\xi|&0\\
  0 & 0 & 0 & -\frac{\xi\otimes\xi}{|\xi|}\\
\end{array}%
\right),\label{R-E26}
\end{eqnarray}
where $A_{j}$ is the matrix appearing in (\ref{R-E24}).
\end{lem}

Applying the operator $\Delta_{q}\;(q\geq-1)$
to (\ref{R-E24}) gives
\begin{eqnarray}
\partial_{t}\Delta_{q}w+\sum_{j=1}^{3}A_{j}(0)\partial_{x_{j}}\Delta_{q}w+L\Delta_{q}w
=\Delta_{q}G.\label{R-E27}
\end{eqnarray}

Perform the Fourier transform with respect to the space variable $x$
for (\ref{R-E24}) before multiplying the matrix $-i|\xi|K(\xi)$. By taking
the inner product in the resulting equality with $\widehat{\Delta_{q}w}$, and then choosing the real part of each term, we get
\begin{eqnarray}
&&\frac{1}{2}\frac{d}{dt}\mathrm{Im}\langle|\xi|K(\xi)\widehat{\Delta_{q}w},\widehat{\Delta_{q}w}\rangle
+|\xi|\langle K(\xi)\sum_{j=1}^{3}\xi_{j}A_{j}(0)\widehat{\Delta_{q}w},\widehat{\Delta_{q}w}\rangle
\nonumber\\&=&|\xi|\mathrm{Im}\langle\tilde{K}(\xi)L\widehat{\Delta_{q}w},\widehat{\Delta_{q}w}\rangle+
|\xi|\mathrm{Im}\langle\tilde{K}(\xi)\widehat{\Delta_{q}G},\widehat{\Delta_{q}w}\rangle. \label{R-E28}
\end{eqnarray}

According to Lemma \ref{lem3.1}, the second term of the left-hand of (\ref{R-E28}) is bounded from below by
\begin{eqnarray}
&&|\xi|\langle K(\xi)\sum_{j=1}^{3}\xi_{j}A_{j}(0)\widehat{\Delta_{q}w},\widehat{\Delta_{q}w}\rangle\nonumber\\&\geq & |\xi|^2|\widehat{\Delta_{q}w}|^2-2|\xi|^2(|\widehat{\Delta_{q}u_{e}}|^2+|\widehat{\Delta_{q}u_{i}}|^2). \label{R-E29}
\end{eqnarray}
Moreover,  by virtue of Young's inequality, the first term of the right side of (\ref{R-E28}) can be estimated as
\begin{eqnarray}
|\xi|\Big|\mathrm{Im}\langle\tilde{K}(\xi)L\widehat{\Delta_{q}w},\widehat{\Delta_{q}w}\rangle\Big|\leq \epsilon |\xi|^2|\widehat{\Delta_{q}w}|^2
+C_{\epsilon}(|\widehat{\Delta_{q}u_{e}}|^2+|\widehat{\Delta_{q}u_{i}}|^2), \label{R-E30}
\end{eqnarray}
where $\epsilon$ is a small constant to be determined and $C_{\epsilon}:=C(\epsilon)$.

For the second term of the right side of (\ref{R-E28}), we have
\begin{eqnarray}
&&|\xi|\mathrm{Im}\langle\tilde{K}(\xi)\widehat{\Delta_{q}G},\widehat{\Delta_{q}w}\rangle
\nonumber\\&=&
\mathrm{Im}\Big(\overline{\widehat{\Delta_{q}\sigma_{e}}-\widehat{\Delta_{q}\sigma_{i}}}\xi^{\top}\widehat{\Delta_{q}E}\Big)
\nonumber\\&=&-\frac{1}{2}i\overline{\widehat{\Delta_{q}\sigma_{e}}-\widehat{\Delta_{q}\sigma_{i}}}\xi^{\top}\widehat{\Delta_{q}E}
+\frac{1}{2}i\Big(\widehat{\Delta_{q}\sigma_{e}}-\widehat{\Delta_{q}\sigma_{i}}\Big)\xi^{\top}\overline{\widehat{\Delta_{q}E}}
\nonumber\\&=&-\frac{1}{2}\overline{\widehat{\Delta_{q}(\sigma_{e}-\sigma_{i})}}\widehat{\Delta_{q}\mathrm{div}E}
-\frac{1}{2}\widehat{\Delta_{q}(\sigma_{e}-\sigma_{i})}\overline{\widehat{\Delta_{q}\mathrm{div}E}}
\nonumber\\&=& -|\widehat{\Delta_{q}\mathrm{div}E}|^2, \label{R-E31}
\end{eqnarray}
where we have used the equation $\mathrm{div}E=\sigma_{e}-\sigma_{i}$.

Together with (\ref{R-E28})-(\ref{R-E31}), we conclude that
\begin{eqnarray}
&&\frac{1}{2}\frac{d}{dt}\mathrm{Im}\Big\langle\frac{|\xi|}{1+|\xi|^2}\tilde{K}(\xi)\widehat{\Delta_{q}w},\widehat{\Delta_{q}w}\Big\rangle
+\frac{|\xi|^2}{2(1+|\xi|^2)}|\widehat{\Delta_{q}w}|^2\nonumber\\ &&\hspace{5mm}+\frac{1}{1+|\xi|^2}|\widehat{\Delta_{q}\mathrm{div}E}|^2
\nonumber\\ &\leq& C(|\widehat{\Delta_{q}u_{e}}|^2+|\widehat{\Delta_{q}u_{i}}|^2), \label{R-E32}
\end{eqnarray}
where we have taken $\epsilon=1/2.$
Therefore, it follows from (\ref{R-E32}) and (\ref{R-E23}) that (\ref{R-E4})
admits a frequency-localization Lyapunov function of the form
$$\mathcal{E}[\widehat{\Delta_{q}\tilde{w}}]:=|\widehat{\Delta_{q}\tilde{w}}|^2
+\kappa\mathrm{Im}\Big\langle\frac{|\xi|}{1+|\xi|^2}K(\xi)\widehat{\Delta_{q}w},\widehat{\Delta_{q}w}\Big\rangle$$
such that
\begin{eqnarray}
&&\frac{1}{2}\frac{d}{dt}\mathcal{E}[\widehat{\Delta_{q}\tilde{w}}]+(1-\kappa)(|\widehat{\Delta_{q}u_{e}}|^2+|\widehat{\Delta_{q}u_{i}}|^2)\nonumber\\ &&
+\frac{\alpha|\xi|^2}{2(1+|\xi|^2)}|\widehat{\Delta_{q}w}|^2+\frac{\alpha}{1+|\xi|^2}|\widehat{\Delta_{q}\mathrm{div}E}|^2\leq0, \label{R-E33}
\end{eqnarray}
where $\kappa>0$ is some small constant.

Choosing $\kappa$ sufficiently small such that $1-\kappa>0$ and $E[\widehat{\Delta_{q}\tilde{w}}]\approx|\widehat{\Delta_{q}\tilde{w}}|^2.$
Furthermore, we deduce that
\begin{eqnarray}
\frac{1}{2}\frac{d}{dt}\mathcal{E}[\widehat{\Delta_{q}\tilde{w}}]+\frac{\kappa|\xi|^2}{2(1+|\xi|^2)}|\widehat{\Delta_{q}w}|^2
+\frac{\kappa}{1+|\xi|^2}|\widehat{\Delta_{q}\mathrm{div}E}|^2\leq0. \label{R-E34}
\end{eqnarray}

In the following,  we deal with (\ref{R-E34}) at the high-frequency and low-frequency, respectively.\\

\underline{\textit{Case 1}($q\geq0$)}
In this case, since $|\xi|\sim 2^{q}\geq1$, we arrive at
\begin{eqnarray}
\frac{1}{2}\frac{d}{dt}\int_{\mathbb{R}^3_{\xi}}\mathcal{E}[\widehat{\Delta_{q}\tilde{w}}]+\frac{\kappa}{4}\|\widehat{\Delta_{q}w}\|^2_{L^2}
+\kappa\int_{\mathbb{R}^3_{\xi}}\frac{1}{1+|\xi|^2}|\widehat{\Delta_{q}\mathrm{div}E}|^2\leq0. \label{R-E35}
\end{eqnarray}
With the aid of Plancherel's theorem and the irrotationality of $E$, the second term on the left of (\ref{R-E35}) can be estimated below
\begin{eqnarray}
&&\int_{\mathbb{R}^3_{\xi}}\frac{1}{1+|\xi|^2}|\widehat{\Delta_{q}\mathrm{div}E}|^2\nonumber\\&\approx&
\|(1-\Delta)^{-1/2}\Delta_{q}\mathrm{div}E\|^2_{L^2}
\nonumber\\&\approx&
\|(1-\Delta)^{-1/2}\nabla\Delta_{q}E\|^2_{L^2}\nonumber\\&\approx&
\int_{\mathbb{R}^3_{\xi}}\frac{|\xi|^2}{1+|\xi|^2}|\widehat{\Delta_{q}E}|^2\geq\frac{1}{2}\|\widehat{\Delta_{q}E}\|^2_{L^2}. \label{R-E36}
\end{eqnarray}
Therefore, combing (\ref{R-E35})-(\ref{R-E36}), there exists a constant $c_{1}>0$ such that
\begin{eqnarray}
\|\Delta_{q}\tilde{w}\|_{L^2}\lesssim e^{-c_{1}t}\|\Delta_{q}\tilde{w}_{0}\|_{L^2}, \label{R-E37}
\end{eqnarray}
which implies that
\begin{eqnarray}
&&\sum_{q\geq0}2^{q(\varrho-\ell)}\|\Delta_{q}\Lambda^{\ell}\tilde{w}\|_{L^2}\nonumber\\&\lesssim& e^{-c_{1}t}\sum_{q\geq0}2^{q(\varrho-\ell)}\|\Delta_{q}\Lambda^{\ell}\tilde{w}_{0}\|_{L^2}\lesssim e^{-c_{1}t} \|\tilde{w}_{0}\|_{\dot{B}_{2,1}^{\varrho}}. \label{R-E38}
\end{eqnarray}

\underline{\textit{Case 2}($q=-1$)}

In this case, since $|\xi|\leq1$, we get
\begin{eqnarray}
\frac{1}{2}\frac{d}{dt}\mathcal{E}[\widehat{\Delta_{-1}\tilde{w}}]+\frac{\kappa|\xi|^2}{4}|\widehat{\Delta_{-1}w}|^2
+\frac{\kappa}{2}|\widehat{\Delta_{-1}\mathrm{div}E}|^2\leq0. \label{R-E39}
\end{eqnarray}
Multiplying (\ref{R-E39}) with $|\xi|^{2\ell}$ and integrating the resulting inequality over $\mathbb{R}^{3}_{\xi}$, similar to the computation of (\ref{R-E36}), we conclude that there exists a constant $c_{2}>0$ such that
\begin{eqnarray}
\frac{1}{2}\frac{d}{dt}\int_{\mathbb{R}^3_{\xi}}|\xi|^{2\ell}\mathcal{E}[\widehat{\tilde{w}_{-1}}]+c_{2}\|\Lambda^{\ell+1}\tilde{w}_{-1}\|^2_{L^2}\leq0, \label{R-E40}
\end{eqnarray}
where $$\Delta_{-1}\tilde{w}:=\tilde{w}_{-1},\ \ \
\int_{\mathbb{R}^3_{\xi}}|\xi|^{2\ell}\mathcal{E}[\widehat{\tilde{w}_{-1}}]\approx\|\Lambda^{\ell}\tilde{w}_{-1}\|^2_{L^2}.$$
According to the interpolation inequality related the Besov space $\dot{B}^{-s}_{2,\infty}$ (see Lemma \ref{lem8.2}), we have
\begin{eqnarray}
\|\Lambda^{\ell}\tilde{w}_{-1}\|_{L^2} &\lesssim&\|\Lambda^{\ell+1}\tilde{w}_{-1}\|^{\theta}_{L^2}\|\tilde{w}_{-1}\|^{1-\theta}_{\dot{B}^{-s}_{2,\infty}}\ \ \  \Big(\theta=\frac{\ell+s}{\ell+1+s}\Big)\nonumber\\ &\lesssim& \|\Lambda^{\ell+1}\tilde{w}_{-1}\|^{\theta}_{L^2}\|\tilde{w}\|^{1-\theta}_{\dot{B}^{-s}_{2,\infty}}, \label{R-E41}
\end{eqnarray}

On the other hand, by employing the operator $\dot{\Delta}_{q}(q\in\mathbb{Z})$ to (\ref{R-E4}) and performing
the procedure leading to (\ref{R-E23}), we can obtain
\begin{eqnarray}
\|\tilde{w}\|_{\dot{B}^{-s}_{2,\infty}}\leq\|\tilde{w}_{0}\|_{\dot{B}^{-s}_{2,\infty}}.  \label{R-E42}
\end{eqnarray}
Hence, together with (\ref{R-E40})-(\ref{R-E42}), we are led to the differential inequality
\begin{eqnarray}
\frac{d}{dt}\int_{\mathbb{R}^3_{\xi}}|\xi|^{2\ell}\mathcal{E}[\widehat{\tilde{w}_{-1}}]
+C\|\tilde{w}_{0}\|_{\dot{B}^{-s}_{2,\infty}}^{-\frac{2}{s}}(\|\Lambda^{\ell}\tilde{w}_{-1}\|^2_{L^2})^{1+\frac{1}{\ell+s}}\leq0,\label{R-E43}
\end{eqnarray}
which implies that
\begin{eqnarray}
\|\Lambda^{\ell}\tilde{w}_{-1}\|_{L^2}\lesssim\|\tilde{w}_{0}\|_{\dot{B}^{-s}_{2,\infty}}(1+t)^{-\frac{\ell+s}{2}}.\label{R-E44}
\end{eqnarray}
Hence, it follows from the high-frequency estimate (\ref{R-E38}) and low-frequency estimate (\ref{R-E44}) that
\begin{eqnarray}
&&\|\Lambda^{\ell}\tilde{w}\|_{B_{2,1}^{\varrho-\ell}}\nonumber\\&\lesssim& \|\tilde{w}_{0}\|_{\dot{B}^{-s}_{2,\infty}}(1+t)^{-\frac{\ell+s}{2}}+ \|\tilde{w}_{0}\|_{\dot{B}_{2,1}^{\varrho}}e^{-c_{1}t}\nonumber\\&\lesssim& \|\tilde{w}_{0}\|_{\dot{B}_{2,1}^{\varrho}\cap \dot{B}_{2,\infty}^{-s}}(1+t)^{-\frac{\ell+s}{2}}. \label{R-E45}
\end{eqnarray}

Finally, the optimal decay estimate (\ref{R-E19}) direcely follows from the embedding $L^p(\mathbb{R}^{3})\hookrightarrow\dot{B}^{-s}_{2,\infty}(\mathbb{R}^{3})(s=3(1/p-1/2))$ in Lemma \ref{lem8.5}. Therefore, the proof of Proposition \ref{prop3.1} is complete.
\end{proof}

Additionally, we have also the decay property on the framework of homogeneous Besov spaces.
\begin{prop}\label{prop3.2}
If $\tilde{w}_{0}\in \dot{B}^{\varrho}_{2,1}(\mathbb{R}^{3})\cap \dot{B}^{-s}_{2,\infty}(\mathbb{R}^{3})$ for $\varrho\in \mathbb{R}, s\in \mathbb{R}$ satisfying $\varrho+s>0$, then the solution $\tilde{w}(t,x)$ of (\ref{R-E4}) has the decay estimate
\begin{equation}
\|\tilde{w}\|_{\dot{B}_{2,1}^{\varrho}}\lesssim \|\tilde{w}_{0}\|_{\dot{B}_{2,1}^{\varrho}\cap \dot{B}_{2,\infty}^{-s}}(1+t)^{-\frac{\varrho+s}{2}}. \label{R-E46}
\end{equation}
In particular, if $\tilde{w}_{0}\in \dot{B}^{\varrho}_{2,1}(\mathbb{R}^{3})\cap L^p(\mathbb{R}^{3})(1\leq p<2$), one further has
\begin{equation}
\|\tilde{w}\|_{\dot{B}_{2,1}^{\varrho}}\lesssim \|\tilde{w}_{0}\|_{\dot{B}_{2,1}^{\varrho}\cap L^{p}}(1+t)^{-\frac{3}{2}(\frac{1}{p}-\frac{1}{2})-\frac{\varrho}{2}}. \label{R-E47}
\end{equation}
\end{prop}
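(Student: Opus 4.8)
The plan is to mirror the proof of Proposition \ref{prop3.1}, the only structural change being that the inhomogeneous blocks $\Delta_q\;(q\ge-1)$ are replaced throughout by the homogeneous blocks $\dot{\Delta}_q\;(q\in\mathbb{Z})$, so that the single low-frequency block $q=-1$ is replaced by the whole dyadic ladder $q<0$. First I would apply $\dot{\Delta}_q$ to the linearized system (\ref{R-E4}) and repeat verbatim the energy computations leading from (\ref{R-E20}) to (\ref{R-E34}); since those manipulations are purely pointwise in $\xi$ and never used the restriction $q\ge-1$, they furnish for every $q\in\mathbb{Z}$ the frequency-localized Lyapunov inequality
\begin{equation*}
\frac{1}{2}\frac{d}{dt}\mathcal{E}[\widehat{\dot{\Delta}_q\tilde{w}}]+\frac{\kappa|\xi|^2}{2(1+|\xi|^2)}|\widehat{\dot{\Delta}_q w}|^2+\frac{\kappa}{1+|\xi|^2}|\widehat{\dot{\Delta}_q\mathrm{div}E}|^2\le0,
\end{equation*}
with $\kappa$ small so that $\mathcal{E}[\widehat{\dot{\Delta}_q\tilde{w}}]\approx|\widehat{\dot{\Delta}_q\tilde{w}}|^2$. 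Invoking the irrotationality of $E$ exactly as in (\ref{R-E36}) to absorb the field into the dissipation, and integrating over the annulus $|\xi|\sim2^q$, I get a single scalar differential inequality per block.

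For the high-frequency range $q\ge0$ one has $|\xi|\sim2^q\ge1$, hence $|\xi|^2/(1+|\xi|^2)\approx1$, and the block inequality gives the exponential bound $\|\dot{\Delta}_q\tilde{w}\|_{L^2}\lesssim e^{-c_1t}\|\dot{\Delta}_q\tilde{w}_0\|_{L^2}$ just as in (\ref{R-E37}). Multiplying by $2^{q\varrho}$ and summing over $q\ge0$ controls the high-frequency part of the $\dot{B}^\varrho_{2,1}$ norm by $e^{-c_1t}\|\tilde{w}_0\|_{\dot{B}^\varrho_{2,1}}$, which is dominated by $(1+t)^{-(\varrho+s)/2}$ since an exponential beats any power.

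The low-frequency range $q<0$ carries the main difficulty, because the single interpolation step used for the block $q=-1$ in Proposition \ref{prop3.1} must now be summed over infinitely many blocks. Here $|\xi|\sim2^q\le1$ gives $|\xi|^2/(1+|\xi|^2)\approx2^{2q}$, so the block inequality reads $\frac{d}{dt}\|\dot{\Delta}_q\tilde{w}\|_{L^2}^2+c\,2^{2q}\|\dot{\Delta}_q\tilde{w}\|_{L^2}^2\le0$ and integrates to $\|\dot{\Delta}_q\tilde{w}\|_{L^2}\lesssim e^{-c\,2^{2q}t}\|\dot{\Delta}_q\tilde{w}_0\|_{L^2}$. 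Using the definition of $\dot{B}^{-s}_{2,\infty}$ in the form $\|\dot{\Delta}_q\tilde{w}_0\|_{L^2}\lesssim2^{qs}\|\tilde{w}_0\|_{\dot{B}^{-s}_{2,\infty}}$, the low-frequency part of the norm becomes
\begin{equation*}
\sum_{q<0}2^{q\varrho}\|\dot{\Delta}_q\tilde{w}\|_{L^2}\lesssim\|\tilde{w}_0\|_{\dot{B}^{-s}_{2,\infty}}\sum_{q<0}2^{q(\varrho+s)}e^{-c\,2^{2q}t}.
\end{equation*}
The decisive ingredient is the elementary dyadic summation $\sum_{q<0}2^{q\sigma}e^{-c\,2^{2q}t}\lesssim(1+t)^{-\sigma/2}$, valid precisely when $\sigma>0$, the dominant contribution coming from the block $2^q\sim t^{-1/2}$; this is exactly where the hypothesis $\varrho+s>0$ enters. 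With $\sigma=\varrho+s$ this yields the low-frequency decay $(1+t)^{-(\varrho+s)/2}$.

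Combining the two ranges gives (\ref{R-E46}). Finally, (\ref{R-E47}) follows at once by applying the continuous embedding $L^p(\mathbb{R}^3)\hookrightarrow\dot{B}^{-s}_{2,\infty}(\mathbb{R}^3)$ with $s=3(1/p-1/2)$ recorded in Lemma \ref{lem8.5} to (\ref{R-E46}). I expect the only genuinely new point relative to Proposition \ref{prop3.1} to be this dyadic summation and the verification that $\varrho+s>0$ is exactly the threshold making the low-frequency series summable; everything else is a transcription of the homogeneous blocks into the argument already carried out.
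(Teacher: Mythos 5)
Your proof is correct, and it fills in details that the paper itself omits: the paper's ``proof'' of Proposition \ref{prop3.2} is only the remark that the blocks $\dot{\Delta}_{q}$ and $\Delta_{q}$ coincide for $q\geq0$, that the low-frequency part requires a different argument exploiting the irrotationality of $E$, and a deferral of that argument to the reference \cite{XK2}. Your outline matches this intended structure exactly. The one point worth comparing is the low-frequency technique. Within this paper (Proposition \ref{prop3.1}), the inhomogeneous low-frequency block $\Delta_{-1}$ is handled by interpolating $\|\Lambda^{\ell}\tilde{w}_{-1}\|_{L^2}$ between $\|\Lambda^{\ell+1}\tilde{w}_{-1}\|_{L^2}$ and $\|\tilde{w}\|_{\dot{B}^{-s}_{2,\infty}}$ (Lemma \ref{lem8.2}) together with the uniform bound (\ref{R-E42}), which converts (\ref{R-E40}) into the nonlinear differential inequality (\ref{R-E43}) and hence algebraic decay. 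That single-block trick cannot be transplanted to the homogeneous setting, where the ball $|\xi|\lesssim1$ is sliced into infinitely many annuli; your replacement --- the block-wise exponential bound $\|\dot{\Delta}_{q}\tilde{w}\|_{L^2}\lesssim e^{-c2^{2q}t}\|\dot{\Delta}_{q}\tilde{w}_{0}\|_{L^2}$, the bound $\|\dot{\Delta}_{q}\tilde{w}_{0}\|_{L^2}\leq 2^{qs}\|\tilde{w}_{0}\|_{\dot{B}^{-s}_{2,\infty}}$, and the dyadic summation $\sum_{q<0}2^{q(\varrho+s)}e^{-c2^{2q}t}\lesssim(1+t)^{-(\varrho+s)/2}$ --- is the natural and correct adaptation, and it makes transparent why the hypothesis $\varrho+s>0$ is exactly the summability threshold (for $\varrho+s\leq0$ the series diverges as $q\to-\infty$). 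Your derivation of (\ref{R-E47}) from (\ref{R-E46}) via the embedding of Lemma \ref{lem8.5} is also the same step the paper uses at the end of Proposition \ref{prop3.1}.
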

\begin{proof}
It suffices to show the different low-frequency estimate, since the operator $\dot{\Delta}_{q}$ consists with $\Delta_{q}$ for $q\geq0$. Note that the
irrotationality of $E$, the proof can be finished by the similar procedure as in \cite{XK2}. We feel free to skip the details for brevity.
\end{proof}

\section{Localized time-weighted energy approaches}\setcounter{equation}{0}\label{sec:4}
The aim of this section is to deduce decay estimates for
the nonlinear system (\ref{R-E3})-(\ref{R-E333}). For this purpose, the frequency-localization
Duhamel principle and time-weighted energy approaches in terms of the low-frequency and high-frequency decomposition
are mainly developed.

System (\ref{R-E3}) can be written as the following form for $\tilde{w}=(\sigma_{e},u_{e},\sigma_{i},u_{i},E)$:
\begin{equation}
\left\{
\begin{array}{l}
\partial_{t} \sigma_{e}+\mathrm{div}u_{e}=f_{1e},\\
\partial_{t} u_{e}+\nabla\sigma_{e}+u_{e}-E=f_{2e},\\
\partial_{t} \sigma_{i}+\mathrm{div}u_{i}=f_{1i},\\
\partial_{t} u_{i}+\nabla\sigma_{i}+u_{i}+E=f_{2i},\\
\partial_{t} E=-\nabla\Delta^{-1}\mathrm{div}(u_{e}-u_{i})+f_{3},
 \end{array}
\right.\label{R-E48}
\end{equation}
with
$$f_{1a}:=-u_{a}\cdot\nabla\sigma_{a}-\sigma_{a}\mathrm{div}u_{a}=-\mathrm{div}(\sigma_{a}u_{a}), \ \ \ f_{2a}:=-u_{a}\cdot\nabla u_{a}-h(\sigma_{a})\nabla\sigma_{a},$$
$$f_{3}:=-\nabla\Delta^{-1}\mathrm{div}(\sigma_{e}u_{e}-\sigma_{i}u_{i}),
$$
where we note that the electronic field equation in (\ref{R-E3}) can be replaced equivalently by the nonlocal evolutionary equation for $E$.
The nonlocal term $-\nabla\Delta^{-1}\mathrm{div}f$ means the sum of products of Riesz transforms of $f$.

The initial data (\ref{R-E333}) is given correspondingly by
\begin{eqnarray}
\tilde{w}|_{t=0}=\tilde{w}_{0}(x)=(\sigma_{e0},u_{e0},\sigma_{i0},u_{i0},E_{0}) \label{R-E488}
\end{eqnarray}
with $E_{0}:=\nabla\Delta^{-1}(\sigma_{e0}-\sigma_{i0}).$

Firstly, we denote by $\mathcal{G}(t)$ the Green matrix associated with the linearized system (\ref{R-E48})-(\ref{R-E488}):
$$\mathcal{G}(t)f=\mathcal{F}^{-1}[e^{-\hat{A}(\xi)t}\mathcal{F}f],$$
with
\begin{eqnarray*}
\hat{A}(\xi)=\left(%
\begin{array}{ccccc}
 0 & i\xi^{\top} & 0 & 0& 0\\
 i\xi & I_{3} & 0 &0 &-I_{3}\\
0 & 0 &  i\xi & I_{3}& I_{3}\\
0&\frac{\xi\otimes\xi}{|\xi|^2}&0&-\frac{\xi\otimes\xi}{|\xi|^2}& 0\\
\end{array}%
\right).
\end{eqnarray*}
Then the solution of
(\ref{R-E4}) with the initial data $\tilde{w}_{0}$ is given by $\mathcal{G}(t)\tilde{w}_{0}$. Furthermore,
by the standard Duhamel principle,  the solution of (\ref{R-E48})-(\ref{R-E488}) can be expressed as
\begin{eqnarray}
\tilde{w}(t,x)=\mathcal{G}(t)\tilde{w}_{0}+\int^{t}_{0}\mathcal{G}(t-\tau)\mathcal{R}(\tau)d\tau, \label{R-E49}
\end{eqnarray}
where $\mathcal{R}:=(f_{1e},f_{2e},f_{1i},f_{2i},f_{3})^{\top}$. It is not difficult to prove
the frequency-localization Duhamel principle for (\ref{R-E48})-(\ref{R-E488}).

\begin{lem}\label{lem4.1}
Suppose that $\tilde{w}(t,x)$ is a solution of (\ref{R-E48})-(\ref{R-E488}). Then
\begin{eqnarray}
\Delta_{q}\Lambda^{\ell}\tilde{w}(t,x)=\Delta_{q}\Lambda^{\ell}[\mathcal{G}(t)\tilde{w}_{0}]
+\int^{t}_{0}\Delta_{q}\Lambda^{\ell}[\mathcal{G}(t-\tau)\mathcal{R}(\tau)]d\tau\label{R-E50}
\end{eqnarray}
for $q\geq-1$ and $\ell\in \mathbb{R}$, and
\begin{eqnarray}
\dot{\Delta}_{q}\Lambda^{\ell}\tilde{w}(t)=\dot{\Delta}_{q}\Lambda^{\ell}[\mathcal{G}(t)\tilde{w}_{0}]
+\int^{t}_{0}\dot{\Delta}_{q}\Lambda^{\ell}[\mathcal{G}(t-\tau)\mathcal{R}(\tau)]d\tau\label{R-E51}
\end{eqnarray}
for $q\in\mathbb{Z}$ and $\ell\in \mathbb{R}$.
\end{lem}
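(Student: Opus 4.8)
The plan is to establish the frequency-localization Duhamel principle in Lemma \ref{lem4.1} by applying the Littlewood-Paley projectors and the fractional derivative operator $\Lambda^{\ell}$ directly to the integral representation \eqref{R-E49}, then commuting these Fourier multipliers through the convolution structure of the Green matrix. First I would recall that the Duhamel formula \eqref{R-E49} itself is the standard integral representation for the mild solution of the linear inhomogeneous system \eqref{R-E48}--\eqref{R-E488} with forcing $\mathcal{R}(\tau)$; this is justified by the fact that $\mathcal{G}(t)=\mathcal{F}^{-1}[e^{-\hat{A}(\xi)t}\mathcal{F}\,\cdot\,]$ defines the semigroup generated by the linear operator, so that $\mathcal{G}(t)\tilde{w}_{0}$ solves \eqref{R-E4} and the convolution-in-time term absorbs the inhomogeneity.

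The key observation is that all three operators involved---$\Delta_{q}$ (or $\dot{\Delta}_{q}$), $\Lambda^{\ell}$, and $\mathcal{G}(t-\tau)$---are Fourier multipliers acting in the spatial variable $x$, and hence they commute with one another. Concretely, $\Delta_{q}$ is convolution with $\Phi_{q}$, $\Lambda^{\ell}=\mathcal{F}^{-1}|\xi|^{\ell}\mathcal{F}$, and $\mathcal{G}(t-\tau)$ has symbol $e^{-\hat{A}(\xi)(t-\tau)}$; on the Fourier side each merely multiplies $\hat{\tilde{w}}(\xi)$ by the respective symbol $\phi_{q}(\xi)$, $|\xi|^{\ell}$, and $e^{-\hat{A}(\xi)(t-\tau)}$, all of which are scalar or matrix functions of $\xi$ alone and therefore commute pointwise in $\xi$. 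The second step is therefore to apply $\Delta_{q}\Lambda^{\ell}$ to both sides of \eqref{R-E49}; since these operators are linear and bounded (the time integral converging in the relevant Besov norm by the energy framework of Theorem \ref{thm1.1}), I may pass $\Delta_{q}\Lambda^{\ell}$ inside the integral over $\tau$, yielding \eqref{R-E50}. The identical argument with $\dot{\Delta}_{q}$ in place of $\Delta_{q}$ gives \eqref{R-E51}, the only distinction being that the homogeneous projector ranges over all $q\in\mathbb{Z}$ whereas the inhomogeneous one is restricted to $q\geq-1$.

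The single point requiring a little care---and the place I would expect the main obstacle to lie---is the justification of interchanging the operator $\Delta_{q}\Lambda^{\ell}$ (or $\dot{\Delta}_{q}\Lambda^{\ell}$) with the time integral $\int_{0}^{t}\mathcal{G}(t-\tau)\mathcal{R}(\tau)\,d\tau$. This is legitimate provided the integrand is sufficiently regular and integrable in $\tau$ with values in the Besov space under consideration; the spectral localization by $\Delta_{q}$ confines the Fourier support to a fixed annulus, on which $\Lambda^{\ell}$ is a bounded operator by the Bernstein-type equivalence in Lemma \ref{lem2.1}(ii), so no issue of nonintegrable high-frequency growth arises, and Fubini's theorem applies to exchange the spatial convolution with the temporal integration. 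Because the verification is thus essentially a commutation of bounded Fourier multipliers under an absolutely convergent integral, I would present it succinctly rather than belabor the routine estimates, which is precisely the spirit in which the lemma is stated as "not difficult to prove."
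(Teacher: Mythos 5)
Your proposal is correct and is essentially the argument the paper intends: the paper states the lemma without proof (remarking only that it is ``not difficult to prove''), and the implicit argument is exactly yours---apply the spatial Fourier multipliers $\Delta_{q}\Lambda^{\ell}$ (resp.\ $\dot{\Delta}_{q}\Lambda^{\ell}$) to the standard Duhamel representation (\ref{R-E49}) and commute them with the multiplier $\mathcal{G}(t-\tau)$ and the time integral, the interchange being justified by the regularity of the global solution from Theorem \ref{thm1.1}. (One tiny caveat: for the block $q=-1$ the Fourier support is a ball rather than an annulus, so the equivalence of Lemma \ref{lem2.1}(ii) does not apply there; but for the values $\ell\geq 0$ actually used, Lemma \ref{lem2.1}(i) suffices and nothing in your argument breaks.)
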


In what follows, the main task is to prove the decay estimates by using the time-weighted energy approach which was initialled in \cite{Ma}. To do this, we first define some time-weighted sup-norms as follows:
$$
\mathcal{E}_{0}(t):=\sup_{0\leq\tau\leq t}\|\tilde{w}(\tau)\|_{B^{s_{c}}_{2,1}};
$$
\begin{eqnarray*}
\mathcal{E}_{1}(t)&:=&\sup_{0\leq\ell<(s_{c}-1)}\sup_{0\leq\tau\leq t}(1+\tau)^{\frac{s+\ell}{2}}\|\Lambda^{\ell}\tilde{w}(\tau)\|_{B^{s_{c}-1-\ell}_{2,1}}\nonumber\\&&+\sup_{0\leq\tau\leq t}(1+\tau)^{\frac{s+s_{c}-1}{2}}\|\Lambda^{s_{c}-1}\tilde{w}(\tau)\|_{\dot{B}^{0}_{2,1}};
\end{eqnarray*}
\begin{eqnarray*}
\mathcal{E}_{2}(t)&:=&\sup_{0\leq\ell<(s_{c}-2)}\sup_{0\leq\tau\leq t}(1+\tau)^{\frac{s+\ell+1}{2}}\|\Lambda^{\ell}(u_{e},u_{i})(\tau)\|_{B^{s_{c}-2-\ell}_{2,1}}
\nonumber\\&&+\sup_{0\leq\tau\leq t}(1+\tau)^{\frac{s+s_{c}-1}{2}}\|\Lambda^{\sigma_{c}-2}(u_{e},u_{i})(\tau)\|_{\dot{B}^{0}_{2,1}}
\end{eqnarray*}
and further set
$$\mathcal{E}(t):=\mathcal{E}_{1}(t)+\mathcal{E}_{2}(t).$$

\begin{rem} \label{rem4.1}
In comparison with \cite{Ma}, the new energy functionals contain different time-weighted norms according to the derivative index, since we take care of the topological relation between inhomegeous Besov spaces and homogeneous Besov spaces to overcome the technical difficulty in the subsequent nonlinear analysis. In addition, improved Bernstein inequality (Lemma \ref{lem2.1}) allows energy functionals to have the derivative case of fractional order rather than the integer order only.
\end{rem}

Precisely, with aid of the frequency-localization Duhamel principle in Lemma \ref{lem4.1},
we shall develop the time-weighted energy approach in terms of the low-frequency and high-frequency decomposition.
Consequently, we prove the following result.
\begin{prop}\label{prop4.1}
Let $\tilde{w}=(\sigma_{e},u_{e},\sigma_{i},u_{i},E)$ be the global classical solution in the sense of Theorem \ref{thm1.1}. Suppose that $\tilde{w}_{0}\in B^{s_{c}}_{2,1}\cap \dot{B}^{-s}_{2,\infty}(0<s\leq 3/2)$ and the norm
$\mathcal{M}_{0}:=\|\tilde{w}_{0}\|_{B^{s_{c}}_{2,1}\cap \dot{B}^{-s}_{2,\infty}}$ is sufficiently small. Then it holds that
\begin{eqnarray}
\|\Lambda^{\ell}\tilde{w}(t)\|_{X_{1}}\lesssim \mathcal{M}_{0}(1+t)^{-\frac{s+\ell}{2}} \label{R-E52}
\end{eqnarray}
for $0\leq\ell\leq s_{c}-1$, where $X_{1}:=B^{s_{c}-1-\ell}_{2,1}$ if $0\leq\ell<s_{c}-1$ and $X_{1}:=\dot{B}^{0}_{2,1}$ if $\ell=s_{c}-1$; and
\begin{eqnarray}
\|\Lambda^{\ell}(u_{e},u_{i},\sigma_{e}-\sigma_{i})(t)\|_{X_{2}}\lesssim \mathcal{M}_{0}(1+t)^{-\frac{s+\ell+1}{2}} \label{R-E53}
\end{eqnarray}
for $0\leq\ell\leq s_{c}-2$, where $X_{2}:=B^{s_{c}-2-\ell}_{2,1}$ if $0\leq\ell<s_{c}-2$ and $X_{2}:=\dot{B}^{0}_{2,1}$ if $\ell=s_{c}-2$.
\end{prop}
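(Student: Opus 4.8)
The plan is to combine the frequency-localization Duhamel principle (Lemma~\ref{lem4.1}) with the linear decay estimates (Propositions~\ref{prop3.1}--\ref{prop3.2}) through a bootstrap on the time-weighted functional $\mathcal{E}(t)$. The strategy is to show that $\mathcal{E}(t)\lesssim \mathcal{M}_{0}+\mathcal{E}(t)^{2}$; since $\mathcal{M}_{0}$ is small and $\mathcal{E}(0)\lesssim\mathcal{M}_{0}$, a standard continuation argument then yields $\mathcal{E}(t)\lesssim\mathcal{M}_{0}$ uniformly in $t$, which is exactly the content of (\ref{R-E52})--(\ref{R-E53}). First I would apply $\Delta_{q}\Lambda^{\ell}$ to the Duhamel formula (\ref{R-E50}) and split into the linear contribution $\mathcal{G}(t)\tilde{w}_{0}$ and the nonlinear Duhamel integral $\int_{0}^{t}\mathcal{G}(t-\tau)\mathcal{R}(\tau)\,d\tau$. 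The linear piece is controlled directly by (\ref{R-E18}) of Proposition~\ref{prop3.1} with $\varrho=s_{c}-1$, giving the $(1+t)^{-(s+\ell)/2}$ decay with constant $\mathcal{M}_{0}$.

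For the nonlinear integral I would perform the usual low-frequency/high-frequency split of the semigroup. On low frequencies ($q=-1$ in the inhomogeneous picture, or the homogeneous low modes) I would apply the homogeneous decay estimate (\ref{R-E46}) to $\mathcal{G}(t-\tau)\mathcal{R}(\tau)$, choosing a negative Besov index $\dot{B}^{-s}_{2,\infty}$ for the source and using the product law in $\dot{B}^{-s}_{2,\infty}$ (controlling $\mathcal{R}$ by $\|\tilde{w}\|_{B^{s_{c}}_{2,1}}\|\tilde{w}\|_{\dot{B}^{-s}_{2,\infty}}$-type bounds), so that the gain of derivatives in $\mathcal{R}=(\mathrm{div}(\sigma u),\,u\cdot\nabla u+h(\sigma)\nabla\sigma,\,\dots)$ is absorbed into the kernel's smoothing. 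On high frequencies I would use the exponential decay (\ref{R-E37})--(\ref{R-E38}), which makes the time integral converge and leaves only the regularity bookkeeping via the Moser estimates of Proposition~\ref{prop2.1}. In both regimes the nonlinear source is quadratic, so each term carries two copies of a time-weighted norm, producing the $\mathcal{E}(t)^{2}$ on the right-hand side after the standard splitting $\int_{0}^{t}=\int_{0}^{t/2}+\int_{t/2}^{t}$ and insertion of the decay weights.

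The genuinely delicate point, flagged already in the introduction, is the extra half-rate for the \emph{non-degenerate} quantities $(u_{e},u_{i},\sigma_{e}-\sigma_{i})$ in (\ref{R-E53}). The coupled field $E$ carries no additional half-power of decay, so one cannot obtain the sharp rate for the individual velocities directly from the Duhamel formula. Following the announced device, I would \emph{add} the two velocity equations in (\ref{R-E48}) to cancel $E$ and thereby extract the improved decay for the sum $u_{e}+u_{i}$; to close the estimate for each velocity separately it then suffices to control the \emph{difference} $\tilde{u}=u_{e}-u_{i}$, which together with $\tilde{\sigma}=\sigma_{e}-\sigma_{i}$ satisfies the one-fluid subsystem (\ref{R-E1000}). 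Because that subsystem enjoys exponential stability, applying $\Delta_{q}\Lambda^{\ell}$ ($0\le\ell\le s_{c}-2$) to (\ref{R-E1000}) and using the high/low frequency analysis of Section~\ref{sec:3} yields exponential (hence in particular the required sharp) decay for $(\tilde{\sigma},\tilde{u},E)$; the nonlinear closure then follows by a second application of the frequency-localization Duhamel principle, exactly as announced for Lemmas~\ref{lem4.4}--\ref{lem4.5}. The main obstacle I anticipate is precisely this half-rate gain: matching the derivative indices across the inhomogeneous/homogeneous interface (the switch $X_{2}=B^{s_{c}-2-\ell}_{2,1}$ versus $\dot{B}^{0}_{2,1}$ at the endpoint $\ell=s_{c}-2$) while keeping every nonlinear term quadratically bounded by $\mathcal{E}(t)$ requires careful choice of the auxiliary negative index $s$ and of the product estimates, and this is where the bulk of the technical work will lie.
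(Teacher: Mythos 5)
Your overall architecture coincides with the paper's own proof: the same time-weighted functional with the bootstrap to $\mathcal{E}(t)\lesssim\mathcal{M}_{0}+\mathcal{E}^{2}(t)$ (Proposition \ref{prop4.2}), the frequency-localization Duhamel principle of Lemma \ref{lem4.1} combined with the linear decay of Sect.~\ref{sec:3} split into low and high frequencies (Lemmas \ref{lem4.2}--\ref{lem4.3}), and, crucially, the sum/difference device for the velocities, with the difference $(\tilde{\sigma},\tilde{u},E)$ governed by the one-fluid Euler--Poisson subsystem whose linearization decays exponentially (Lemmas \ref{lem4.4}--\ref{lem4.5} and Claim \ref{claim4.1}); the closure via $\mathcal{E}_{0}(t)\lesssim\mathcal{M}_{0}$ from Theorem \ref{thm1.1} is also implicit in your bootstrap.

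There is, however, one concrete step that would fail as written: the low-frequency nonlinear estimate. You propose to control the source by a product law of the type $\|\mathcal{R}\|_{\dot{B}^{-s}_{2,\infty}}\lesssim\|\tilde{w}\|_{B^{s_{c}}_{2,1}}\|\tilde{w}\|_{\dot{B}^{-s}_{2,\infty}}$. This cannot close the argument, for two reasons. First, $\|\tilde{w}(\tau)\|_{\dot{B}^{-s}_{2,\infty}}$ is not controlled by any of the functionals $\mathcal{E}_{0},\mathcal{E}_{1},\mathcal{E}_{2}$ for the \emph{nonlinear} solution; the uniform bound (\ref{R-E42}) is proved only for the linear flow. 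Second, even granting such a uniform bound, that factor carries no time decay, so the Duhamel integrand would be of size $(1+t-\tau)^{-\frac{s+\ell}{2}}\cdot\mathcal{M}_{0}\cdot C$, and $\int_{0}^{t}(1+t-\tau)^{-\frac{s+\ell}{2}}d\tau$ does not produce the rate $(1+t)^{-\frac{s+\ell}{2}}$ (it even grows when $s+\ell\leq 2$, which happens throughout the admissible range). The paper's Lemma \ref{lem4.2} avoids this by estimating $\|\mathcal{R}(\tau)\|_{\dot{B}^{-s}_{2,\infty}}$ through the embedding $L^{p}\hookrightarrow\dot{B}^{-s}_{2,\infty}$ with $s=3(1/p-1/2)$ (Lemma \ref{lem8.5}), H\"older's inequality $\|\sigma_{e}u_{e}\|_{L^{p}}\leq\|\sigma_{e}\|_{L^{3/s}}\|u_{e}\|_{L^{2}}$, and Gagliardo--Nirenberg interpolation (Lemma \ref{lem8.4}), so that \emph{both} factors become $L^{2}$-based quantities of nonnegative derivative order; each then inherits algebraic decay from $\mathcal{E}_{1},\mathcal{E}_{2}$, yielding $\|\mathcal{R}(\tau)\|_{\dot{B}^{-s}_{2,\infty}}\lesssim(1+\tau)^{-s-\frac{1}{2}}\mathcal{E}^{2}(t)$ (and $(1+\tau)^{-s-1}\mathcal{E}^{2}(t)$ when $s\leq1/2$), which convolves with the linear kernel to the desired rate. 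Your later remark that ``each term carries two copies of a time-weighted norm'' is exactly the right idea, but it is precisely what the product law you wrote down fails to deliver; this substitution of interpolation/embedding estimates for the proposed product law is where the essential missing work lies.
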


Proposition \ref{prop4.1} mainly depends on an energy inequality related to those time-weighted quantities, which is included in the following proposition.
\begin{prop}\label{prop4.2}
Let $\tilde{w}=(\sigma_{e},u_{e},\sigma_{i},u_{i},E)$ be the global classical solution in the sense of Theorem \ref{thm1.1}. Additional, if $\tilde{w}_{0}\in \dot{B}^{-s}_{2,\infty}(0<s\leq 3/2)$, then
\begin{eqnarray}
\mathcal{E}(t)\lesssim \mathcal{M}_{0}+\mathcal{E}^{2}(t)+\mathcal{E}_{0}(t)\mathcal{E}(t), \label{R-E54}
\end{eqnarray}
where $\mathcal{M}_{0}$ is defined as Proposition \ref{prop4.1}.
\end{prop}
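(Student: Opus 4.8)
The plan is to estimate $\mathcal{E}_1(t)$ and $\mathcal{E}_2(t)$ separately via the frequency-localization Duhamel formula \eqref{R-E50}--\eqref{R-E51}, controlling the linear part by the decay estimates of Propositions \ref{prop3.1}--\ref{prop3.2} and the Duhamel integral by the nonlinear source $\mathcal{R}=(f_{1e},f_{2e},f_{1i},f_{2i},f_{3})$. For each fixed derivative index $\ell$, I would apply $\Delta_q\Lambda^\ell$ (or $\dot\Delta_q\Lambda^\ell$ in the endpoint case $\ell=s_c-1$ or $\ell=s_c-2$) to \eqref{R-E49}, take $L^2$ norms, multiply by $2^{q(\text{order})}$ and sum over $q$, thereby converting the pointwise Green-matrix decay into Besov-norm decay. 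The linear contribution $\mathcal{G}(t)\tilde w_0$ is handled directly: Proposition \ref{prop3.1} with $\varrho=s_c-1$ gives exactly $(1+t)^{-(s+\ell)/2}\mathcal{M}_0$, matching the target \eqref{R-E52}. The essential work is therefore the Duhamel integral $\int_0^t \mathcal{G}(t-\tau)\mathcal{R}(\tau)\,d\tau$, where the decay rate of $\mathcal{G}(t-\tau)$ must be balanced against the growth/decay of $\|\mathcal{R}(\tau)\|$ and the singularity of the convolution at $\tau=t$.

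The key steps, in order, are as follows. First I would split the time integral into the low-frequency regime (treated with the homogeneous decay estimate \eqref{R-E46}, which needs $\mathcal{R}(\tau)\in\dot B^{-s}_{2,\infty}$ or an $L^p$-type bound) and the high-frequency regime (treated with the exponential decay $e^{-c_1(t-\tau)}$ from \eqref{R-E38}, exploiting the spectral gap at large frequencies). Second, for the nonlinear terms, since all of $f_{1a},f_{2a},f_3$ are quadratic—products of the form $\sigma_a u_a$, $u_a\cdot\nabla u_a$, $h(\sigma_a)\nabla\sigma_a$, and Riesz transforms thereof—I would invoke the Moser-type product estimates of Proposition \ref{prop2.1} together with the composition estimate of Proposition \ref{prop2.2} for $h(\sigma_a)$, bounding each product by a sum of the form $\|\text{(factor one)}\|_{B}\,\|\text{(factor two)}\|_{B}$. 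The crucial point is to arrange these so that one factor is absorbed into $\mathcal{E}_0(t)$ (the uniform-in-time critical norm, of size $\mathcal{M}_0$) and the other into a time-weighted norm contributing to $\mathcal{E}(t)$; this is precisely what produces the quadratic terms $\mathcal{E}^2(t)$ and the cross term $\mathcal{E}_0(t)\mathcal{E}(t)$ on the right-hand side of \eqref{R-E54}. Third, after inserting these bounds, the time integral reduces to a standard convolution of the form $\int_0^t (1+t-\tau)^{-\beta}(1+\tau)^{-\alpha}\,d\tau$, which decays like the slower of the two rates (times $(1+t)^{-(\text{min})}$) provided the exponents are arranged correctly; I would verify that the resulting exponent matches the prescribed weight $(1+t)^{-(s+\ell)/2}$.

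The delicate part—mirroring the ``technical obstruction'' flagged in the introduction—is obtaining the \emph{half-faster} rate \eqref{R-E53} for the non-degenerate quantities $(u_e,u_i,\sigma_e-\sigma_i)$, where the coupled field $E$ obstructs a direct argument because $E$ carries no extra half rate. Here I would follow the two-step device already outlined in the excerpt: add the two velocity equations in \eqref{R-E48} to cancel $E$, yielding an equation for $u_e+u_i$ whose source has the structure needed for the faster estimate; and separately analyze the difference system \eqref{R-E1000} for $(\tilde u,\tilde\sigma,E)$, which is a one-fluid Euler-Poisson system enjoying exponential decay (by the Littlewood-Paley estimates with $\Delta_q\Lambda^\ell$, $0\le\ell\le s_c-2$). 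Combining the sharp estimate for the sum with the exponential estimate for the difference closes the bound for each velocity individually. The main obstacle I anticipate is the \emph{endpoint regularity management}: near $\ell=s_c-1$ and $\ell=s_c-2$ the target space switches from inhomogeneous $B^{s_c-1-\ell}_{2,1}$ to homogeneous $\dot B^0_{2,1}$, and the product estimates must be applied with indices that stay within the validity range $s_1,s_2\le n/p$, $s_1+s_2>0$ of Proposition \ref{prop2.1}; keeping track of this, and ensuring the convolution exponent $(s+\ell+1)/2$ does not overshoot the integrability threshold $\beta<1$ that would cause a logarithmic loss, is where the bookkeeping is most likely to fail if done carelessly.
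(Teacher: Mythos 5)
Your proposal is correct and follows essentially the same route as the paper: frequency-localized Duhamel with low-frequency algebraic decay plus high-frequency exponential decay for $\mathcal{E}_1$ (the paper's Lemmas \ref{lem4.2}--\ref{lem4.3}), Moser-type product and composition estimates arranged as $\mathcal{E}_0\cdot\mathcal{E}$ and $\mathcal{E}^2$, and the sum/difference decomposition of the velocities with exponential decay of the linearized one-fluid difference system (the paper's Lemmas \ref{lem4.4}--\ref{lem4.5} and Claim \ref{claim4.1}) to handle the missing half rate from $E$. Even the endpoint bookkeeping you flag (switching to $\dot{B}^{0}_{2,1}$ at $\ell=s_c-1,\,s_c-2$ and checking the convolution exponents stay integrable) is exactly how the paper closes the argument.
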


The proof of Proposition \ref{prop4.2} is divided into several lemmas for clarity.
The first lemma is about the nonlinear low-frequency estimates of solutions.

\begin{lem}\label{lem4.2} (Low-frequency estimates)
Under the assumption of Proposition \ref{prop4.2}, we have
\begin{eqnarray}
\|\Delta_{-1}\Lambda^{\ell}\tilde{w}\|_{L^2}\lesssim \|\tilde{w}_{0}\|_{\dot{B}^{-s}_{2,\infty}} (1+t)^{-\frac{s+\ell}{2}}+(1+t)^{-\frac{s+\ell}{2}}\mathcal{E}^{2}(t). \label{R-E55}
\end{eqnarray}
for $0\leq\ell<\sigma_{c}-1$ and
\begin{eqnarray}
\sum_{q<0}\|\dot{\Delta}_{q}\Lambda^{\sigma_{c}-1}\tilde{w}\|_{L^2}\lesssim \|\tilde{w}_{0}\|_{\dot{B}^{-s}_{2,\infty}} (1+t)^{-\frac{s+\sigma_{c}-1}{2}}+(1+t)^{-\frac{s+\sigma_{c}-1}{2}}\mathcal{E}^{2}(t). \label{R-E56}
\end{eqnarray}
\end{lem}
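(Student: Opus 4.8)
The plan is to run the frequency-localized Duhamel formula of Lemma \ref{lem4.1} and to feed the linear low-frequency decay already established in Propositions \ref{prop3.1}--\ref{prop3.2} into its inhomogeneous term, so that everything reduces to controlling the nonlinearity $\mathcal{R}=(f_{1e},f_{2e},f_{1i},f_{2i},f_{3})^{\top}$ in the negative regularity norm $\dot{B}^{-s}_{2,\infty}$. Concretely, for (\ref{R-E55}) I would start from
$$\Delta_{-1}\Lambda^{\ell}\tilde{w}(t)=\Delta_{-1}\Lambda^{\ell}[\mathcal{G}(t)\tilde{w}_{0}]+\int_{0}^{t}\Delta_{-1}\Lambda^{\ell}[\mathcal{G}(t-\tau)\mathcal{R}(\tau)]\,d\tau,$$
and for the top-order bound (\ref{R-E56}) use the homogeneous version (\ref{R-E51}) with $\dot{\Delta}_{q}\Lambda^{s_{c}-1}$ summed over $q<0$. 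The linear piece $\Delta_{-1}\Lambda^{\ell}[\mathcal{G}(t)\tilde{w}_{0}]$ is exactly the low-frequency (Case~2) situation treated in the proof of Proposition \ref{prop3.1}, hence it is bounded by $\|\tilde{w}_{0}\|_{\dot{B}^{-s}_{2,\infty}}(1+t)^{-(s+\ell)/2}$, supplying the first term on the right of (\ref{R-E55})--(\ref{R-E56}).

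For the Duhamel integral I would regard $\mathcal{R}(\tau)$ as \emph{initial data} for the linear flow and invoke the same low-frequency decay, so that
$$\|\Delta_{-1}\Lambda^{\ell}[\mathcal{G}(t-\tau)\mathcal{R}(\tau)]\|_{L^{2}}\lesssim (1+t-\tau)^{-\frac{s+\ell}{2}}\,\|\mathcal{R}(\tau)\|_{\dot{B}^{-s}_{2,\infty}}.$$
Everything then hinges on a good time-decay estimate for $\|\mathcal{R}(\tau)\|_{\dot{B}^{-s}_{2,\infty}}$. The key structural observation is that every component of $\mathcal{R}$ is quadratic and carries one derivative: $f_{1a}=-\mathrm{div}(\sigma_{a}u_{a})$, $f_{3}$ a Riesz combination of the same products, and $h(\sigma_{a})\nabla\sigma_{a}=\nabla H(\sigma_{a})$ with $H'=h$ and $H(\sigma)=O(\sigma^{2})$. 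That derivative can be absorbed into the negative index, e.g. $\|\mathrm{div}(\sigma_{a}u_{a})\|_{\dot{B}^{-s}_{2,\infty}}\lesssim\|\sigma_{a}u_{a}\|_{\dot{B}^{1-s}_{2,\infty}}$, the Riesz factors in $f_{3}$ being harmless by their boundedness on homogeneous Besov spaces. I would then bound the resulting products by the Moser estimate of Proposition \ref{prop2.1} together with the embeddings of Lemma \ref{lem2.2}, distributing the regularity so that one factor sits in a positive-index space controlled by $\mathcal{E}_{1}$ and, whenever a velocity appears, cashing in the extra half rate recorded by $\mathcal{E}_{2}$. This yields $\|\mathcal{R}(\tau)\|_{\dot{B}^{-s}_{2,\infty}}\lesssim(1+\tau)^{-\beta}\,\mathcal{E}^{2}(t)$ for an appropriate exponent $\beta$.

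The main obstacle is verifying that $\beta$ is large enough, uniformly over $s\in(0,3/2]$ and $\ell$, for the convolution to close at the claimed rate. After splitting $\int_{0}^{t}=\int_{0}^{t/2}+\int_{t/2}^{t}$ and applying the elementary decay-convolution lemma, one obtains $(1+t)^{-(s+\ell)/2}\mathcal{E}^{2}(t)$ precisely when $\beta\geq(s+\ell)/2$ with one of the two competing exponents strictly exceeding $1$. The danger zone is small $s$ paired with the slowest, purely density-density nonlinearity: a naive $L^{2}\times L^{2}$ count of $\sigma_{a}\nabla\sigma_{a}$ gives only $\beta=s+\tfrac12$, which fails for $s\leq\tfrac12$. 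The gain comes from the $\nabla H(\sigma_{a})$ structure, which turns the estimate into $\|\sigma_{a}^{2}\|_{\dot{B}^{1-s}_{2,\infty}}$ and buys an extra half-derivative of decay, pushing $\beta$ above $1$; the same device applied to $\mathrm{div}(\sigma_{a}u_{a})$ together with the velocity half-rate covers the remaining terms.

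Finally, the top-order estimate (\ref{R-E56}) is carried out verbatim on the homogeneous blocks $\dot{\Delta}_{q}$, $q<0$, replacing Proposition \ref{prop3.1} by Proposition \ref{prop3.2} and the single block $\Delta_{-1}$ by the sum $\sum_{q<0}$. The only additional point is the summability of the $\dot{B}^{0}_{2,1}$-type low-frequency series, which is immediate since the negative index $-s<0$ makes the relevant geometric sum over $q<0$ converge; the nonlinear input is estimated by exactly the same product-law argument. Collecting the linear and nonlinear contributions then gives (\ref{R-E55})--(\ref{R-E56}).
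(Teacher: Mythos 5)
Your proposal follows the paper's skeleton exactly: the frequency-localized Duhamel formula of Lemma \ref{lem4.1}, the low-frequency linear decay from the proof of Proposition \ref{prop3.1} (Proposition \ref{prop3.2} for the top order), the reduction of the Duhamel integral to a bound on $\|\mathcal{R}(\tau)\|_{\dot{B}^{-s}_{2,\infty}}$, and a time-convolution argument; this is precisely (\ref{R-E57})--(\ref{R-E58}) and (\ref{R-E69})--(\ref{R-E70}). The divergence is in how you measure the nonlinearity in $\dot{B}^{-s}_{2,\infty}$, and there your plan has a genuine gap. Your structural premise --- that every component of $\mathcal{R}$ carries a derivative that can be absorbed into the negative index --- fails for the convection terms $u_{a}\cdot\nabla u_{a}$ in $f_{2a}$: no irrotationality is assumed for $u_{a}$, so these are not gradients, and the decomposition $u\cdot\nabla u=\mathrm{div}(u\otimes u)-u\,\mathrm{div}u$ still leaves a non-divergence quadratic remainder. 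For such a term you must estimate a raw product directly in $\dot{B}^{-s}_{2,\infty}$, and the tools you invoke (Proposition \ref{prop2.1} and the embeddings of Lemma \ref{lem2.2}) cannot do this at the endpoint $s=3/2$: the second product law of Proposition \ref{prop2.1} requires $s_{1}+s_{2}>0$, i.e.\ a target index strictly above $-3/2$, and homogeneous Besov spaces do not embed downward in the index, so no positive-index product estimate can land in $\dot{B}^{-3/2}_{2,\infty}$. Since $s=3/2$ is included in the hypothesis $0<s\leq 3/2$ and is exactly the case that yields the $L^{1}$ rates of Remark \ref{rem1.4}, this is not a removable corner. The paper closes it uniformly in $s$ by a tool absent from your list: the embedding $L^{p}\hookrightarrow\dot{B}^{-s}_{2,\infty}$, $s=3(1/p-1/2)$, of Lemma \ref{lem8.5}, followed by H\"older ($L^{3/s}\times L^{2}$) and the Gagliardo--Nirenberg inequality of Lemma \ref{lem8.4}, with the case split $0<s\leq 1/2$ and $1/2<s\leq 3/2$; see (\ref{R-E59})--(\ref{R-E68}). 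Your "naive $L^{2}\times L^{2}$ count" is itself secretly this embedding at $p=1$, which only exists at $s=3/2$.

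Two secondary points. First, your antiderivative trick $h(\sigma_{a})\nabla\sigma_{a}=\nabla H(\sigma_{a})$ is a legitimate alternative for the density--density term in the regime where you need it ($s\leq 1/2$, so that $1-s\geq 1/2>0$ and the algebra part of Proposition \ref{prop2.1} applies), but it mischaracterizes the paper: the paper never uses this structure. Its gain for small $s$ comes from placing the density factor in $L^{3/s}$ and representing that norm, via Lemma \ref{lem8.4}, as an interpolation between $\|\Lambda\sigma_{a}\|_{L^{2}}$ and $\|\Lambda^{\beta}\sigma_{a}\|_{L^{2}}$, each of which already decays like $(1+\tau)^{-\frac{s}{2}-\frac{1}{2}}$ or faster; this gives the exponent $s+1$ in (\ref{R-E62})--(\ref{R-E63}) with no divergence structure at all. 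Second, a bookkeeping caution: $\mathcal{E}_{2}$ records the extra half rate only for derivative orders $\ell\leq s_{c}-2=1/2$, so for $s<1/2$ the norm $\|u_{a}\|_{\dot{B}^{1-s}_{2,\infty}}$ appearing in your product estimates (derivative order $1-s>1/2$) is controlled only by $\mathcal{E}_{1}$, without the half rate; the exponent still exceeds $1$ there, but only because the companion $L^{\infty}$ factor decays like $(1+\tau)^{-\frac{s}{2}-\frac{3}{4}}$, not because you can "cash in" $\mathcal{E}_{2}$ as stated. These two items are repairable; the endpoint failure at $s=3/2$ is the substantive omission, and fixing it essentially forces you back to the paper's embedding argument.
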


\begin{proof} From (\ref{R-E44}), we have
\begin{eqnarray}
\|\Delta_{-1}\Lambda^{\ell}[\mathcal{G}(t)\tilde{w}_{0}]\|_{L^2}\lesssim\|\tilde{w}_{0}\|_{\dot{B}^{-s}_{2,\infty}}(1+t)^{-\frac{s+\ell}{2}}.\label{R-E57}
\end{eqnarray}
Furthermore, it follows from Lemma \ref{lem4.1} that
\begin{eqnarray}
&&\|\Delta_{-1}\Lambda^{\ell}\tilde{w}(t,x)\|_{L^2}\nonumber\\&\leq&\|\Delta_{-1}\Lambda^{\ell}[\mathcal{G}(t)\tilde{w}_{0}]\|_{L^2}
+\int^{t}_{0}\|\Delta_{-1}\Lambda^{\ell}[\mathcal{G}(t-\tau)\mathcal{R}(\tau)]\|_{L^2}d\tau
\nonumber\\&\lesssim&\|\tilde{w}_{0}\|_{\dot{B}^{-s}_{2,\infty}}(1+t)^{-\frac{s+\ell}{2}}
+\int_{0}^{t}(1+t-\tau)^{-\frac{s+\ell}{2}}\|\mathcal{R}(\tau)\|_{\dot{B}^{-s}_{2,\infty}}d\tau. \label{R-E58}
\end{eqnarray}
Next, we turn to estimate the norm $\|\mathcal{R}(\tau)\|_{\dot{B}^{-s}_{2,\infty}}$. For instance, we obtain
\begin{eqnarray}
\|f_{3}(\tau)\|_{\dot{B}^{-s}_{2,\infty}}\leq \|(\sigma_{e}u_{e}-\sigma_{i}u_{i})(\tau)\|_{\dot{B}^{-s}_{2,\infty}}\leq \|\sigma_{e}u_{e}(\tau)\|_{L^p}+\|\sigma_{i}u_{i}(\tau)\|_{L^p}, \label{R-E59}
\end{eqnarray}
where we have used the $L^2$-boundedness of Riesz transform on each block $\dot{\Delta}_{q}(q\in \mathbb{Z})$ and the embedding $L^p(\mathbb{R}^{3})\hookrightarrow\dot{B}^{-s}_{2,\infty}(\mathbb{R}^{3})(s=3(1/p-1/2))$ in Lemma \ref{lem8.5}.

In the following, we proceed with the inequality (\ref{R-E59}) with aid of different interpolation inequalities in Lemma \ref{lem8.4}.\\

\underline{\textit{Case 1} ($0<s\leq 1/2$)}
It follows from the H\"{o}lder's inequality that
\begin{eqnarray}
\|(\sigma_{e}u_{e})(\tau)\|_{L^p}\leq\|\sigma_{e}(\tau)\|_{L^{3/s}}\|u_{e}(\tau)\|_{L^2}, \label{R-E60}
\end{eqnarray}
since $1/p=s/3+1/2.$

Applying Lemma \ref{lem8.4} (taking $r=2$) and Young's inequality to (\ref{R-E60}) gives
\begin{eqnarray}
\|(\sigma_{e}u_{e})(\tau)\|_{L^p}&\lesssim & \|\Lambda\sigma_{e}(\tau)\|^{\theta}_{L^{2}}\|\Lambda^{\beta}\sigma_{e}(\tau)\|^{1-\theta}_{L^{2}}\|u_{e}(\tau)\|_{L^2}
\nonumber\\&
\lesssim & (\|\Lambda\sigma_{e}(\tau)\|_{L^{2}}+\|\Lambda^{\beta}\sigma_{e}(\tau)\|_{L^{2}})\|u_{e}(\tau)\|_{L^2}, \label{R-E61}
\end{eqnarray}
where $3/2-s<\beta\leq s_{c}-1$ and $\theta=\frac{\beta+s-3/2}{\beta-1}$. In case that $3/2-s<\beta<s_{c}-1$, we arrive at
\begin{eqnarray}
&&\|(\sigma_{e}u_{e})(\tau)\|_{L^p}\nonumber\\&\lesssim &
(\|\Lambda\sigma_{e}(\tau)\|_{B^{s_{c}-2}_{2,1}}+\|\Lambda^{\beta}\sigma_{e}(\tau)\|_{B^{s_{c}-1-\beta}_{2,1}})\|u_{e}(\tau)\|_{B^{s_{c}-2}_{2,1}}
\nonumber\\&\lesssim &
\Big[(1+\tau)^{-\frac{s}{2}-\frac{1}{2}}+(1+\tau)^{-\frac{s}{2}-\frac{\beta}{2}}\Big](1+\tau)^{-\frac{s}{2}-\frac{1}{2}}\mathcal{E}_{1}(t)\mathcal{E}_{2}(t)
\nonumber\\&
\lesssim &(1+\tau)^{-s-1}\mathcal{E}_{1}(t)\mathcal{E}_{2}(t). \label{R-E62}
\end{eqnarray}
In case that $\beta=s_{c}-1$, we have
\begin{eqnarray}
&&\|(\sigma_{e}u_{e})(\tau)\|_{L^p}\nonumber\\&\lesssim &
(\|\Lambda\sigma_{e}(\tau)\|_{B^{s_{c}-2}_{2,1}}+\|\Lambda^{s_{c}-1}\sigma_{e}(\tau)\|_{\dot{B}^{0}_{2,1}})\|u_{e}(\tau)\|_{B^{s_{c}-2}_{2,1}}
\nonumber\\&
\lesssim &(1+\tau)^{-s-1}\mathcal{E}_{1}(t)\mathcal{E}_{2}(t), \label{R-E63}
\end{eqnarray}
where we used the fact $\dot{B}^{0}_{2,1}\hookrightarrow L^2$.

\underline{\textit{Case 2} ($1/2<s\leq 3/2$)}

It follows from the H\"{o}lder's inequality,
Lemma \ref{lem8.4} and Young's inequality that
\begin{eqnarray}
&&\|(\sigma_{e}u_{e})(\tau)\|_{L^p}\nonumber\\ &\lesssim & \|\sigma_{e}(\tau)\|_{L^{n/s}}\|u_{e}(\tau)\|_{L^2}
\nonumber\\&
\lesssim &\|\sigma_{e}(\tau)\|^{\theta}_{L^{2}}\|\Lambda \sigma_{e}(\tau)\|^{1-\theta}_{L^2}\|u_{e}(\tau)\|_{L^2}
\nonumber\\&
\lesssim &(\|\sigma_{e}(\tau)\|_{L^{2}}+\|\Lambda \sigma_{e}(\tau)\|_{L^2})\|u_{e}(\tau)\|_{L^2}
\nonumber\\&
\lesssim &(\|\sigma_{e}(\tau)\|_{B^{s_{c}-1}_{2,1}}+\|\Lambda\sigma_{e}(\tau)\|_{B^{s_{c}-2}_{2,1}})\|u_{e}\|_{B^{\sigma_{c}-2}_{2,1}}
\nonumber\\&
\lesssim &\Big[(1+\tau)^{-\frac{s}{2}}+(1+\tau)^{-\frac{s}{2}-\frac{1}{2}}\Big](1+\tau)^{-\frac{s}{2}-\frac{1}{2}}\mathcal{E}_{1}(t)\mathcal{E}_{2}(t)
\nonumber\\&
\lesssim& (1+\tau)^{-s-\frac{1}{2}}\mathcal{E}_{1}(t)\mathcal{E}_{2}(t) \label{R-E64}
\end{eqnarray}
with $\theta=1+s-3/2$, where $s+1/2>0$.

Similarly,
\begin{eqnarray}
 \|(\sigma_{i}u_{i})(\tau)\|_{L^p}\lesssim
 \begin{cases}
 (1+\tau)^{-s-1}\mathcal{E}_{1}(t)\mathcal{E}_{2}(t),\ \ \ \ 0<s\leq 1/2;\\
 (1+\tau)^{-s-\frac{1}{2}}\mathcal{E}_{1}(t)\mathcal{E}_{2}(t),\ \ \ 1/2<s\leq 3/2.
 \end{cases}\label{R-E65}
\end{eqnarray}
Hence, together with (\ref{R-E64})-(\ref{R-E65}), we are led to the estimate
\begin{eqnarray}
\|f_{3}(\tau)\|_{\dot{B}^{-s}_{2,\infty}}\lesssim
 \begin{cases}
 (1+\tau)^{-s-1}\mathcal{E}_{1}(t)\mathcal{E}_{2}(t),\ \ \ \ 0<s\leq 1/2;\\
 (1+\tau)^{-s-\frac{1}{2}}\mathcal{E}_{1}(t)\mathcal{E}_{2}(t),\ \ \ 1/2<s\leq 3/2.
 \end{cases} \label{R-E66}
\end{eqnarray}
Furthermore, in a similar way, we can arrive at
\begin{eqnarray}
\|f_{ja}(\tau)\|_{\dot{B}^{-s}_{2,\infty}}\lesssim
 \begin{cases}
 (1+\tau)^{-s-1}\mathcal{E}_{1}^2(t),\ \ \ \ 0<s\leq 1/2;\\
 (1+\tau)^{-s-\frac{1}{2}}\mathcal{E}_{1}^2(t),\ \ \ 1/2<s\leq 3/2,
 \end{cases}\label{R-E67}
\end{eqnarray}
where $j=1,2$ and $a=e,i$. Note that the definition of $\mathcal{E}(t)$, we conclude that
\begin{eqnarray}
\|\mathcal{R}(\tau)\|_{\dot{B}^{-s}_{2,\infty}}\lesssim
 \begin{cases}
 (1+\tau)^{-s-1}\mathcal{E}^2(t),\ \ \ \ 0<s\leq 1/2;\\
 (1+\tau)^{-s-\frac{1}{2}}\mathcal{E}^2(t),\ \ \ 1/2<s\leq 3/2,
 \end{cases}\label{R-E68}
\end{eqnarray}
Therefore, the desired inequality (\ref{R-E55}) is followed by (\ref{R-E58}) and (\ref{R-E68}) immediately.

On the other hand, as $\ell=s_{c}-1$, it follows from Proposition \ref{prop3.2} that
\begin{eqnarray}
\sum_{q<0}\|\dot{\Delta}_{q}\Lambda^{s_{c}-1}[\mathcal{G}(t)\tilde{w}_{0}]\|_{L^2} \lesssim \|\tilde{w}_{0}\|_{\dot{B}^{-s}_{2,\infty}} (1+t)^{-\frac{s+s_{c}-1}{2}}. \label{R-E69}
\end{eqnarray}
Furthermore, by Lemma \ref{lem4.1}, we can obtain
\begin{eqnarray}
&&\sum_{q<0}\|\dot{\Delta}_{q}\Lambda^{s_{c}-1}\tilde{w}\|_{L^2}
\nonumber\\&\leq&\sum_{q<0}
\|\dot{\Delta}_{q}\Lambda^{s_{c}-1}[\mathcal{G}(t)\tilde{w}_{0}]\|_{L^2}
+\int^{t}_{0}\sum_{q<0}\|\dot{\Delta}_{q}\Lambda^{s_{c}-1}[\mathcal{G}(t-\tau)\mathcal{R}(\tau)]\|_{L^2}d\tau.
\nonumber\\&\lesssim &  \|\tilde{w}_{0}\|_{\dot{B}^{-s}_{2,\infty}} (1+t)^{-\frac{s+s_{c}-1}{2}}+
\int^{t}_{0}(1+t-\tau)^{-\frac{s+s_{c}-1}{2}}\|\mathcal{R}(\tau)\|_{\dot{B}^{-s}_{2,\infty}}d\tau, \label{R-E70}
\end{eqnarray}
Just doing the same procedure leading to (\ref{R-E55}), we can obtain (\ref{R-E56}).
\end{proof}

The subsequent lemma is related to the nonlinear high-frequency estimates of solutions.
\begin{lem}\label{lem4.3}(High-frequency estimates)
 Under the assumption of Proposition \ref{prop4.2}, we have
\begin{eqnarray}
\sum_{q\geq0}2^{q(s_{c}-1-\ell)}\|\Delta_{q}\Lambda^{\ell}\tilde{w}\|_{L^2}\lesssim \|\tilde{w}_{0}\|_{B^{s_{c}}_{2,1}}e^{-c_{1}t}
+(1+t)^{-\frac{s+\ell}{2}}\mathcal{E}_{0}(t)\mathcal{E}_{1}(t) \label{R-E71}
\end{eqnarray}
for $0\leq\ell\leq s_{c}-1$.
\end{lem}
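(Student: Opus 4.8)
The plan is to combine the frequency-localization Duhamel principle of Lemma \ref{lem4.1} with the high-frequency exponential decay of the linear semigroup and the nonlinear product estimates of Section \ref{sec:2}. First I would apply (\ref{R-E50}) to split $\sum_{q\geq0}2^{q(s_{c}-1-\ell)}\|\Delta_{q}\Lambda^{\ell}\tilde{w}\|_{L^2}$ into the free evolution $\mathcal{G}(t)\tilde{w}_{0}$ and the Duhamel integral $\int_{0}^{t}\mathcal{G}(t-\tau)\mathcal{R}(\tau)\,d\tau$. For the high frequencies $q\geq0$ the block $\Delta_{q}$ is supported where $|\xi|\sim2^{q}\geq1$, so the improved Bernstein inequality (Lemma \ref{lem2.1}) gives $\|\Delta_{q}\Lambda^{\ell}g\|_{L^2}\approx2^{q\ell}\|\Delta_{q}g\|_{L^2}$; hence each weighted sum collapses to the $\ell$-independent quantity $\sum_{q\geq0}2^{q(s_{c}-1)}\|\Delta_{q}\,\cdot\,\|_{L^2}$. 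Since the claimed right-hand side $(1+t)^{-(s+\ell)/2}$ is largest at $\ell=s_{c}-1$, it suffices to establish the single sharpest bound with rate $(1+t)^{-(s+s_{c}-1)/2}$, and the cases $\ell<s_{c}-1$ follow for free.

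For the free part I would invoke the high-frequency exponential estimate (\ref{R-E37}) for the linear semigroup, $\|\Delta_{q}[\mathcal{G}(t)g]\|_{L^2}\lesssim e^{-c_{1}t}\|\Delta_{q}g\|_{L^2}$ $(q\geq0)$, to obtain
$$\sum_{q\geq0}2^{q(s_{c}-1)}\|\Delta_{q}[\mathcal{G}(t)\tilde{w}_{0}]\|_{L^2}\lesssim e^{-c_{1}t}\|\tilde{w}_{0}\|_{\dot{B}^{s_{c}-1}_{2,1}}\lesssim e^{-c_{1}t}\|\tilde{w}_{0}\|_{B^{s_{c}}_{2,1}},$$
which is the first term on the right of (\ref{R-E71}). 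The same exponential bound applied under the Duhamel integral reduces the nonlinear contribution to
$$\int_{0}^{t}\sum_{q\geq0}2^{q(s_{c}-1)}\|\Delta_{q}[\mathcal{G}(t-\tau)\mathcal{R}(\tau)]\|_{L^2}\,d\tau\lesssim\int_{0}^{t}e^{-c_{1}(t-\tau)}\|\mathcal{R}(\tau)\|_{\dot{B}^{s_{c}-1}_{2,1}}\,d\tau.$$

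The heart of the argument is the nonlinear estimate $\|\mathcal{R}(\tau)\|_{\dot{B}^{s_{c}-1}_{2,1}}\lesssim(1+\tau)^{-(s+s_{c}-1)/2}\mathcal{E}_{0}(t)\mathcal{E}_{1}(t)$. Each component of $\mathcal{R}$ is quadratic ($f_{1a}=-\mathrm{div}(\sigma_{a}u_{a})$, $f_{2a}=-u_{a}\cdot\nabla u_{a}-h(\sigma_{a})\nabla\sigma_{a}$, $f_{3}=-\nabla\Delta^{-1}\mathrm{div}(\sigma_{e}u_{e}-\sigma_{i}u_{i})$), so I would use the Moser product law (Proposition \ref{prop2.1}) at regularity $s_{c}$ or $s_{c}-1$, the composition estimate (Proposition \ref{prop2.2}) for $h(\sigma_{a})$ with $h(0)=0$, and the $L^2$-boundedness of the Riesz-type multiplier $\nabla\Delta^{-1}\mathrm{div}$ on each high-frequency block. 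In every product I place one factor in $L^{\infty}$ and the other at top regularity: the high-regularity factor is absorbed into $\mathcal{E}_{0}(t)$ through $\|\cdot\|_{\dot{B}^{s_{c}}_{2,1}}$ with no time weight, while the $L^{\infty}$ factor is controlled via the sharp borderline embedding $\dot{B}^{3/2}_{2,1}=\dot{B}^{s_{c}-1}_{2,1}\hookrightarrow L^{\infty}$ (Lemma \ref{lem2.2}(5)). Crucially, $\|\cdot\|_{\dot{B}^{s_{c}-1}_{2,1}}$ is exactly the endpoint term in $\mathcal{E}_{1}(t)$ carrying the fastest weight $(1+\tau)^{-(s+s_{c}-1)/2}$; this upgrades the decay from the naive $(1+\tau)^{-s/2}$ (which would come from using the inhomogeneous $B^{3/2}_{2,1}\hookrightarrow L^{\infty}$ at $\ell=0$) to the sharp rate required here.

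Finally I would insert this bound and apply the standard convolution estimate $\int_{0}^{t}e^{-c_{1}(t-\tau)}(1+\tau)^{-a}\,d\tau\lesssim(1+t)^{-a}$, valid for $a=(s+s_{c}-1)/2>0$ because the exponential kernel localizes the integral near $\tau=t$, to conclude that the nonlinear contribution is $\lesssim(1+t)^{-(s+s_{c}-1)/2}\mathcal{E}_{0}(t)\mathcal{E}_{1}(t)$. Adding the free and nonlinear parts and relaxing $(1+t)^{-(s+s_{c}-1)/2}\leq(1+t)^{-(s+\ell)/2}$ yields (\ref{R-E71}). The main obstacle is the nonlinear decay step: one must recognize that the sharp rate is forced by the endpoint $\ell=s_{c}-1$ of the time-weighted norm, and that only the \emph{homogeneous} borderline embedding $\dot{B}^{3/2}_{2,1}\hookrightarrow L^{\infty}$ supplies the extra powers of decay; keeping the high-regularity factor free in $\mathcal{E}_{0}(t)$ and never spending a time weight on it is what closes the estimate with the product $\mathcal{E}_{0}\mathcal{E}_{1}$ rather than a worse quantity.
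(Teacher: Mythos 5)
Your proposal is correct, and it rests on the same skeleton as the paper's proof: the frequency-localized Duhamel formula (\ref{R-E50}), the high-frequency exponential bound (\ref{R-E37}) applied both to the free evolution and inside the Duhamel integral, a quadratic Besov estimate on $\mathcal{R}$ producing the factor $\mathcal{E}_{0}(t)\mathcal{E}_{1}(t)$, and the convolution inequality $\int_{0}^{t}e^{-c_{1}(t-\tau)}(1+\tau)^{-a}d\tau\lesssim(1+t)^{-a}$. The differences are in execution, and they are worth recording. First, you use the Bernstein equivalence $\|\Delta_{q}\Lambda^{\ell}\tilde{w}\|_{L^2}\approx 2^{q\ell}\|\Delta_{q}\tilde{w}\|_{L^2}$ for $q\geq0$ to collapse the weighted sum into an $\ell$-independent quantity, so that only the endpoint rate $\ell=s_{c}-1$ needs to be proved and the remaining cases follow by relaxing the exponent; the paper instead keeps $\ell$ explicit throughout and runs a case distinction $0\leq\ell<s_{c}-1$ versus $\ell=s_{c}-1$ inside the nonlinear estimate (\ref{R-E75}). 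Second, for the quadratic terms you invoke the $L^{\infty}$-Moser inequality (the first estimate of Proposition \ref{prop2.1}) together with the borderline embedding $\dot{B}^{3/2}_{2,1}\hookrightarrow L^{\infty}$, so the time weight is carried by the endpoint term $\|\Lambda^{s_{c}-1}\tilde{w}\|_{\dot{B}^{0}_{2,1}}\approx\|\tilde{w}\|_{\dot{B}^{s_{c}-1}_{2,1}}$ of $\mathcal{E}_{1}$, while the high-regularity factor sits in $\dot{B}^{s_{c}}_{2,1}\subset B^{s_{c}}_{2,1}$ and is absorbed into $\mathcal{E}_{0}$; the paper instead uses the critical product law (the second estimate of Proposition \ref{prop2.1}), $\|fg\|_{\dot{B}^{3/2}_{2,1}}\lesssim\|f\|_{\dot{B}^{3/2}_{2,1}}\|g\|_{\dot{B}^{3/2}_{2,1}}$, pairing $\mathcal{E}_{1}$ at derivative level $\ell$ with $\mathcal{E}_{0}$ at top regularity. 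Both weight distributions give the same product $\mathcal{E}_{0}(t)\mathcal{E}_{1}(t)$, and both treat $f_{1a}$, $f_{2a}$ (with Proposition \ref{prop2.2} for $h(\sigma_{a})$) and the degree-zero multiplier in $f_{3}$ in the same way; your version has the additional merit of making explicit the convolution-in-time step that the paper compresses into the rather informal line (\ref{R-E73}). One phrasing slip: the factor $(1+t)^{-(s+\ell)/2}$ is \emph{smallest} (i.e., the claim is strongest) at $\ell=s_{c}-1$, not largest; your ensuing deduction that the cases $\ell<s_{c}-1$ follow for free is nevertheless exactly right.
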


\begin{proof}
Due to $\Delta_{q}f\equiv\dot{\Delta}_{q}f(q\geq0$), it suffices to show (\ref{R-E71}) for the inhomogeneous case.
From (\ref{R-E37}), we get
\begin{eqnarray}
\|\Delta_{q}\Lambda^{\ell}\mathcal{G}(t)\tilde{w}_{0}\|_{L^2}\lesssim e^{-c_{1}t}\|\Delta_{q}\Lambda^{\ell}\tilde{w}_{0}\|_{L^2} \label{R-E72}
\end{eqnarray}
for all $q\geq0$. It follows from Lemma \ref{lem4.1} that
\begin{eqnarray}
&&\|\Delta_{q}\Lambda^{\ell}\tilde{w}\|_{L^2}
\nonumber\\&\leq&\|\Delta_{q}\Lambda^{\ell}[\mathcal{G}(t)\tilde{w}_{0}]\|_{L^2}+\int^{t}_{0}\|\Delta_{q}\Lambda^{\ell}[\mathcal{G}(t-\tau)\mathcal{R}(\tau)]\|_{L^2}d\tau
\nonumber\\&\lesssim& e^{-c_{2}t}\|\Delta_{q}\Lambda^{\ell}\tilde{w}_{0}\|_{L^2}+\int^{t}_{0}e^{-c_{2}(t-\tau)}\|\Delta_{q}\Lambda^{\ell}\mathcal{R}(\tau)\|_{L^2}d\tau
\nonumber\\&\lesssim& e^{-c_{2}t}\|\Delta_{q}\Lambda^{\ell}\tilde{w}_{0}\|_{L^2}+\|\Delta_{q}\Lambda^{\ell}\mathcal{R}(t)\|_{L^2} \label{R-E73}
\end{eqnarray}
which leads to
\begin{eqnarray}
\sum_{q\geq0}2^{q(s_{c}-1-\ell)}\|\Delta_{q}\Lambda^{\ell}\tilde{w}\|_{L^2}\lesssim \|\tilde{w}_{0}\|_{B^{s_{c}}_{2,1}}e^{-c_{1}t}+\|\Lambda^{\ell}\mathcal{R}(t)\|_{\dot{B}^{s_{c}-1-\ell}_{2,1}} \label{R-E74}
\end{eqnarray}
for $0\leq\ell\leq s_{c}-1$. Then, what left is to estimates the norm $\|\Lambda^{\ell}\mathcal{R}(t)\|_{\dot{B}^{s_{c}-1-\ell}_{2,1}}$.

For instance, it follows from Proposition \ref{prop2.1} that
\begin{eqnarray}
&&\|\Lambda^{\ell}f_{3}(t)\|_{\dot{B}^{s_{c}-1-\ell}_{2,1}}\nonumber\\&\lesssim& \|-\nabla\Delta^{-1}\mathrm{div}(\sigma_{e}u_{e}-\sigma_{i}u_{i})\|_{\dot{B}^{s_{c}-1}_{2,1}}\nonumber\\&\lesssim&
\|\sigma_{e}u_{e}-\sigma_{i}u_{i}\|_{\dot{B}^{s_{c}-1}_{2,1}}
\nonumber\\&\lesssim& \|\Lambda^{\ell}\sigma_{e}\|_{\dot{B}^{s_{c}-1-\ell}_{2,1}}\|u_{e}\|_{\dot{B}^{s_{c}-1}_{2,1}}
+\|\Lambda^{\ell}\sigma_{i}\|_{\dot{B}^{s_{c}-1-\ell}_{2,1}}\|u_{i}\|_{\dot{B}^{s_{c}-1}_{2,1}}
\nonumber\\&\lesssim&
\begin{cases}
\|\Lambda^{\ell}\sigma_{e}\|_{B^{s_{c}-1-\ell}_{2,1}}\|u_{e}\|_{B^{s_{c}}_{2,1}}+ \|\Lambda^{\ell}\sigma_{ei}\|_{B^{s_{c}-1-\ell}_{2,1}}\|u_{i}\|_{B^{s_{c}}_{2,1}}, \ \ 0\leq\ell<s_{c}-1;\\
\|\Lambda^{s_{c}-1}\sigma_{e}\|_{\dot{B}^{0}_{2,1}}\|u_{e}\|_{B^{s_{c}}_{2,1}}+\|\Lambda^{s_{c}-1}\sigma_{i}\|_{\dot{B}^{0}_{2,1}}\|u_{i}\|_{B^{s_{c}}_{2,1}}, \ \ \ell=s_{c}-1;
 \end{cases}
\nonumber\\&\lesssim& (1+t)^{-\frac{s+\ell}{2}}\mathcal{E}_{0}(t)\mathcal{E}_{1}(t).  \label{R-E75}
\end{eqnarray}
Similarly,
\begin{eqnarray}
\|\Lambda^{\ell}f_{ja}(t)\|_{\dot{B}^{s_{c}-1-\ell}_{2,1}}\lesssim (1+t)^{-\frac{s+\ell}{2}}\mathcal{E}_{0}(t)\mathcal{E}_{1}(t), \label{R-E76}
\end{eqnarray}
where $j=1,2$ and $a=e,i$.  Therefore, we obtain
\begin{eqnarray}
\|\Lambda^{\ell}\mathcal{R}(t)\|_{\dot{B}^{s_{c}-1-\ell}_{2,1}}\lesssim (1+t)^{-\frac{s+\ell}{2}}\mathcal{E}_{0}(t)\mathcal{E}_{1}(t). \label{R-E77}
\end{eqnarray}
Finally, together with (\ref{R-E74}) and (\ref{R-E77}), the inequality (\ref{R-E71}) is followed.
\end{proof}

To obtain the sharp decay estimates for velocities, we meet with the difficulty arising from the coupled electric field
$E$, since it has no addition half decay rate. Here, new observations on the information behind the two-fluid Euler-Poisson equations enable us to overcome it.
Firstly, we give the time-weighted estimates for the sum of two velocities.

\begin{lem} \label{lem4.4} (Estimates for the sum of two velocities)
 Under the assumption of Proposition \ref{prop4.2}, we have
\begin{eqnarray}
&&\|\Lambda^{\ell}(u_{e}+u_{i})(t)\|_{B^{s_{c}-2-\ell}_{2,1}}\nonumber\\&\lesssim&
e^{-t}\|(u_{e0},u_{i0}))\|_{B^{s_{c}-2}_{2,1}}+(1+t)^{-\frac{s+\ell+1}{2}}\mathcal{E}_{1}(t)
\nonumber\\&&+(1+t)^{-\frac{s+\ell+1}{2}}\mathcal{E}_{0}(t)\mathcal{E}_{1}(t)+(1+t)^{-s-\frac{\ell+1}{2}}\mathcal{E}_{1}^2(t) \label{R-E78}
\end{eqnarray}
for $0\leq\ell< s_{c}-2$;
\begin{eqnarray}
&&\|\Lambda^{s_{c}-2}(u_{e}+u_{i})(t)\|_{\dot{B}^{0}_{2,1}}\nonumber\\&\lesssim&
e^{-t}\|(u_{e0},u_{i0}))\|_{B^{s_{c}-2}_{2,1}}+(1+t)^{-\frac{s+s_{c}-1}{2}}\mathcal{E}_{1}(t)
\nonumber\\&&+(1+t)^{-\frac{s+s_{c}-1}{2}}\mathcal{E}_{0}(t)\mathcal{E}_{1}(t). \label{R-E79}
\end{eqnarray}
\end{lem}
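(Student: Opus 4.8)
The plan is to exploit the cancellation already singled out in the introduction: summing the second and fourth momentum equations of (\ref{R-E48}) eliminates the electric field $E$. Writing $U:=u_{e}+u_{i}$ and $\Sigma:=\sigma_{e}+\sigma_{i}$, those two lines add up to the purely damped system $\partial_{t}U+U=-\nabla\Sigma+(f_{2e}+f_{2i})$. Since the damping $+U$ is a frequency-independent multiple of the identity, the associated Green operator is simply $e^{-t}\,\mathrm{Id}$, so Duhamel's formula reads $U(t)=e^{-t}U_{0}+\int_{0}^{t}e^{-(t-\tau)}[-\nabla\Sigma+f_{2e}+f_{2i}](\tau)\,d\tau$. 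Applying $\Delta_{q}\Lambda^{\ell}$, taking $L^{2}$ norms, multiplying by $2^{q(s_{c}-2-\ell)}$ and summing over $q\geq-1$ in the case $0\le\ell<s_{c}-2$ (respectively passing to the homogeneous norm $\dot{B}^{0}_{2,1}$ at the endpoint $\ell=s_{c}-2$), I reduce the whole statement to controlling three contributions under the exponential kernel.

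The two linear contributions are essentially immediate. For the initial datum, $\|\Lambda^{\ell}U_{0}\|_{B^{s_{c}-2-\ell}_{2,1}}\lesssim\|U_{0}\|_{B^{s_{c}-2}_{2,1}}\lesssim\|(u_{e0},u_{i0})\|_{B^{s_{c}-2}_{2,1}}$ by the Bernstein property in Lemma \ref{lem2.1}, which yields the $e^{-t}$ term. For the forcing $\nabla\Sigma$, I will use $\|\Lambda^{\ell}\nabla\Sigma\|_{B^{s_{c}-2-\ell}_{2,1}}\approx\|\Lambda^{\ell+1}\Sigma\|_{B^{s_{c}-2-\ell}_{2,1}}\lesssim\|\Lambda^{\ell+1}\tilde{w}\|_{B^{s_{c}-1-(\ell+1)}_{2,1}}$, which by the very definition of $\mathcal{E}_{1}(t)$ (with index $\ell'=\ell+1<s_{c}-1$) is bounded by $(1+\tau)^{-(s+\ell+1)/2}\mathcal{E}_{1}(t)$. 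The exponential kernel is then absorbed through the elementary convolution estimate $\int_{0}^{t}e^{-(t-\tau)}(1+\tau)^{-\alpha}\,d\tau\lesssim(1+t)^{-\alpha}$ (valid for $\alpha>0$), producing the $(1+t)^{-(s+\ell+1)/2}\mathcal{E}_{1}(t)$ term.

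The nonlinear forcing $f_{2e}+f_{2i}=-\sum_{a}(u_{a}\cdot\nabla u_{a}+h(\sigma_{a})\nabla\sigma_{a})$ will be handled by the Moser product estimate of Proposition \ref{prop2.1} together with the composition estimate of Proposition \ref{prop2.2} applied to $h(\sigma_{a})$. The crux is how to split each product between a full-regularity factor controlled by $\mathcal{E}_{0}(t)$ (carrying no time decay) and a factor controlled by the decay norm $\mathcal{E}_{1}(t)$. For $h(\sigma_{a})\nabla\sigma_{a}$ the derivative factor forces $h(\sigma_{a})$ into the $B^{s_{c}}_{2,1}$ norm bounded by $\mathcal{E}_{0}$, while $\nabla\sigma_{a}$ retains the decay $(1+\tau)^{-(s+\ell+1)/2}$, which after the convolution bound gives the $(1+t)^{-(s+\ell+1)/2}\mathcal{E}_{0}(t)\mathcal{E}_{1}(t)$ term. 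For $u_{a}\cdot\nabla u_{a}$ both factors are velocity components of $\tilde{w}$, so both may be measured in decay norms governed by $\mathcal{E}_{1}$; combining the $\ell'=0$ decay $(1+\tau)^{-s/2}$ of one factor with the $(1+\tau)^{-(s+\ell+1)/2}$ decay of the other produces the faster rate $(1+\tau)^{-s-(\ell+1)/2}$ and hence the $\mathcal{E}_{1}^{2}(t)$ term. The dyadic series converge because the relevant target index $s_{c}-1-(\ell+1)$ is positive precisely when $\ell<s_{c}-2$.

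The main obstacle is this last step, and in particular securing the extra half-power of decay in the $\mathcal{E}_{1}^{2}$ term: this requires splitting the quadratic velocity term so that \emph{each} factor genuinely contributes an $s$-dependent decay, which hinges on interpolating between different derivative indices $\ell'$ in the definition of $\mathcal{E}_{1}$ and on the product/algebra inequalities holding at the relevant critical regularities. A second delicate point is the endpoint $\ell=s_{c}-2$ of (\ref{R-E79}), where the target space is the homogeneous $\dot{B}^{0}_{2,1}$ and the index $\ell+1=s_{c}-1$ reaches the boundary of the range permitted in $\mathcal{E}_{1}$; there I must invoke the homogeneous decay estimate of Proposition \ref{prop3.2} and the second (homogeneous) piece in the definition of $\mathcal{E}_{1}(t)$. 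At that endpoint the quadratic velocity term no longer admits the faster splitting and is absorbed into the $\mathcal{E}_{0}\mathcal{E}_{1}$ bound, which is exactly why no $\mathcal{E}_{1}^{2}$ contribution appears in (\ref{R-E79}).
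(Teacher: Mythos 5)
Your skeleton is the same as the paper's: adding the two momentum equations of (\ref{R-E48}) cancels $E$, the resulting equation $\partial_{t}(u_{e}+u_{i})+(u_{e}+u_{i})=-\nabla(\sigma_{e}+\sigma_{i})+f_{2e}+f_{2i}$ is solved with the kernel $e^{-t}$, the data and the linear forcing are treated exactly as in (\ref{R-E83})--(\ref{R-E84}), and the convolution bound $\int_{0}^{t}e^{-(t-\tau)}(1+\tau)^{-\alpha}d\tau\lesssim(1+t)^{-\alpha}$ finishes those pieces. The genuine gap is in your nonlinear estimates, and it sits in the low-frequency part of the inhomogeneous target norm. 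The block $q=-1$ of $\|\Lambda^{\ell}f\|_{B^{s_{c}-2-\ell}_{2,1}}$ is comparable to $\|f\|_{L^{2}}$, and there your derivative-shifting device is unavailable: on frequencies $|\xi|\lesssim 1$ one cannot bound $\|\Delta_{-1}\nabla u_{a}\|_{L^{2}}$ by $\|\Delta_{-1}\Lambda^{\ell+1}u_{a}\|_{L^{2}}$, since $|\xi|\lesssim|\xi|^{\ell+1}$ fails as $|\xi|\to0$. Hence at low frequency $\mathcal{E}_{1}$ yields at best the rate $(1+\tau)^{-\frac{s+1}{2}}$ for the gradient factor, independently of $\ell$. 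Your pairing for $h(\sigma_{a})\nabla\sigma_{a}$, where the other factor is measured by $\mathcal{E}_{0}$ and carries no decay, therefore gives only $(1+t)^{-\frac{s+1}{2}}\mathcal{E}_{0}\mathcal{E}_{1}$, weaker than the claimed $(1+t)^{-\frac{s+\ell+1}{2}}\mathcal{E}_{0}\mathcal{E}_{1}$ for every $\ell>0$; and your pairing for $u_{a}\cdot\nabla u_{a}$ ($\ell'=0$ against $\ell'=\ell+1$) degenerates there to $(1+t)^{-s-\frac{1}{2}}\mathcal{E}_{1}^{2}$, weaker than the claimed $(1+t)^{-s-\frac{\ell+1}{2}}\mathcal{E}_{1}^{2}$ whenever $\ell>s$ (an admissible regime, since $s\in(0,3/2]$ may be arbitrarily small while $\ell$ ranges up to $s_{c}-2$). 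These losses are fatal for the application: in Proposition \ref{prop4.2} the quantity $\mathcal{E}_{2}$ weights exactly by $(1+\tau)^{\frac{s+\ell+1}{2}}$, so a rate short by $(1+t)^{\ell/2}$ or $(1+t)^{(\ell-s)/2}$ cannot be absorbed.

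The paper closes precisely this point by splitting the norm rather than the nonlinearity: $\|f_{2a}\|_{B^{s_{c}-2}_{2,1}}=\|f_{2a}\|_{\dot{B}^{s_{c}-2}_{2,1}}+\|f_{2a}\|_{L^{2}}$, see (\ref{R-E85})--(\ref{R-E86}). On the homogeneous piece derivatives shift freely, $\|\nabla u_{a}\|_{\dot{B}^{s_{c}-2}_{2,1}}\approx\|\Lambda^{\ell+1}u_{a}\|_{\dot{B}^{s_{c}-2-\ell}_{2,1}}\lesssim\|\Lambda^{\ell+1}u_{a}\|_{B^{s_{c}-2-\ell}_{2,1}}$, and the pairing with $\mathcal{E}_{0}$ gives $(1+t)^{-\frac{s+\ell+1}{2}}\mathcal{E}_{0}\mathcal{E}_{1}$ for both nonlinear terms. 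On the $L^{2}$ piece one must instead take the $L^{\infty}$ factor from the endpoint piece of $\mathcal{E}_{1}$, namely $\|u_{a}\|_{L^{\infty}}\lesssim\|\Lambda^{s_{c}-1}u_{a}\|_{\dot{B}^{0}_{2,1}}\lesssim(1+\tau)^{-\frac{s+s_{c}-1}{2}}\mathcal{E}_{1}$ (similarly for $h(\sigma_{a})$ via Proposition \ref{prop2.2}), whose product with the $\ell'=1$ rate $(1+\tau)^{-\frac{s+1}{2}}$ gives $(1+\tau)^{-s-\frac{s_{c}}{2}}\leq(1+\tau)^{-s-\frac{\ell+1}{2}}$; this is where the $\mathcal{E}_{1}^{2}$ term actually originates, for both nonlinear terms and not only the quadratic velocity one. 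With that correction your argument goes through; note also that at the endpoint $\ell=s_{c}-2$ no appeal to Proposition \ref{prop3.2} is needed, since the target $\dot{B}^{0}_{2,1}$ is purely homogeneous and the same $e^{-t}$ Duhamel formula with $\dot{\Delta}_{q}$ blocks and homogeneous product estimates suffices, as in (\ref{R-E89})--(\ref{R-E92}).
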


\begin{proof}
We rewrite the second and fourth equations of (\ref{R-E48}) as
\begin{equation}
\left\{
\begin{array}{l}\partial_{t}u_{e}+u_{e}+\nabla \sigma_{e}-E=f_{2e},\\
\partial_{t}u_{i}+u_{i}+\nabla \sigma_{i}+E=f_{2i}.\\
 \end{array} \right. \label{R-E80}
\end{equation}
By adding two equations in (\ref{R-E80}) to eliminate $E$, and then applying the operator $\Delta_{q}\Lambda^{\ell}(q\geq-1, \ 0\leq\ell\leq s_{c}-2)$ to the resulting equality, we arrive at
\begin{equation}
\partial_{t}\Delta_{q}\Lambda^{\ell}(u_{e}+u_{i})+\Delta_{q}\Lambda^{\ell}(u_{e}+u_{i})=-\Delta_{q}\Lambda^{\ell}(\nabla \sigma_{e}+\nabla \sigma_{i})
+\Delta_{q}\Lambda^{\ell}(f_{2e}+f_{2i}). \label{R-E81}
\end{equation}
Solving the ordinary equation and taking the $L^2$-norm gives
\begin{eqnarray}
&&\|\Delta_{q}\Lambda^{\ell}(u_{e}+u_{i})(t)\|_{L^2}\nonumber\\&\lesssim &\|\Delta_{q}\Lambda^{\ell}(u_{e0},u_{i0})\|_{L^2}e^{-t}
+\int^{t}_{0}e^{-(t-\tau)}\Big(\|\Delta_{q}\Lambda^{\ell}(\nabla \sigma_{e},\nabla \sigma_{i})\|_{L^2}\nonumber\\&&+\|\Delta_{q}\Lambda^{\ell}(f_{2e},f_{2i})\|_{L^2}\Big)d\tau, \label{R-E82}
\end{eqnarray}
In case that $0\leq\ell<s_{c}-2$, by multiplying the factor $2^{q(s_{c}-2-\ell)}$ on both sides of (\ref{R-E82}) and summing up the resulting inequality, we are led to
\begin{eqnarray}
&&\|\Lambda^{\ell}(u_{e},u_{i})(t)\|_{B^{s_{c}-2-\ell}_{2,1}}\nonumber\\&\leq&
e^{-t}\|(u_{e0},u_{i0}))\|_{B^{s_{c}-2}_{2,1}}+\int^{t}_{0}e^{-(t-\tau)}
\Big(\|(\Lambda^{\ell}\nabla \sigma_{e},\Lambda^{\ell}\nabla \sigma_{i})\|_{B^{s_{c}-2-\ell}_{2,1}}
\nonumber\\&&+\|\Lambda^{\ell}(f_{2e},f_{2i})\|_{B^{s_{c}-2-\ell}_{2,1}}\Big)d\tau, \label{R-E83}
\end{eqnarray}
where the linear terms can be estimated as
\begin{eqnarray}
&&\|(\Lambda^{\ell}\nabla \sigma_{e},\Lambda^{\ell}\nabla \sigma_{i})\|_{B^{s_{c}-2-\ell}_{2,1}}\nonumber\\&\lesssim& \|(\Lambda^{\ell+1}\sigma_{e},\Lambda^{\ell+1} \sigma_{i})\|_{B^{s_{c}-1-(\ell+1)}_{2,1}}\lesssim  (1+\tau)^{-\frac{s+\ell+1}{2}}\mathcal{E}_{1}(t). \label{R-E84}
\end{eqnarray}

Next, we turn to estimate the nonlinear terms with respect to  $f_{2e}$ and $f_{2i}$.
The norm is decomposed into two parts according to the relation between homogeneous spaces and inhomogeneous spaces in Lemma \ref{lem2.2}. For instance,
by Lemma \ref{lem2.1}, we further get $\|\Lambda^{\ell}f_{2e}\|_{B^{s_{c}-2-\ell}_{2,1}}\lesssim \|f_{2e}\|_{B^{s_{c}-2}_{2,1}}:=\|f_{2e}\|_{\dot{B}^{s_{c}-2}_{2,1}}+\|f_{2e}\|_{L^2}$,
where
\begin{eqnarray}
&&\|f_{2e}\|_{\dot{B}^{s_{c}-2}_{2,1}}\nonumber\\&\leq& \|u_{e}\|_{\dot{B}^{s_{c}-1}_{2,1}}\|\nabla u_{e}\|_{\dot{B}^{s_{c}-2}_{2,1}}+\|h(\sigma_{e})\|_{\dot{B}^{s_{c}-1}_{2,1}}\|\nabla \sigma_{e}\|_{\dot{B}^{s_{c}-2}_{2,1}}
\nonumber\\&\lesssim& \|u_{e}\|_{B^{s_{c}}_{2,1}}\|\Lambda^{\ell+1}u_{e}\|_{B^{s_{c}-1-(\ell+1)}_{2,1}}
+\|\sigma_{e}\|_{B^{s_{c}}_{2,1}}\|\Lambda^{\ell+1}\sigma_{e}\|_{B^{s_{c}-1-(\ell+1)}_{2,1}}
\nonumber\\&\leq& (1+\tau)^{-\frac{s+\ell+1}{2}}\mathcal{E}_{0}(t)\mathcal{E}_{1}(t) \label{R-E85}
\end{eqnarray}
and
\begin{eqnarray}
&&\|f_{2e}\|_{L^2}\nonumber\\&\leq&\|u_{e}\|_{L^\infty}\|\nabla u_{e}\|_{L^2}+\|h(\sigma_{e})\|_{L^\infty}\|\nabla \sigma_{e}\|_{L^2}
\nonumber\\&\lesssim& \|\Lambda^{\ell}u_{e}\|_{\dot{B}^{s_{c}-1-\ell}_{2,1}}\|\nabla u_{e}\|_{B^{s_{c}-2}_{2,1}}+\|\Lambda^{\ell}\sigma_{e}\|_{\dot{B}^{s_{c}-1-\ell}_{2,1}}\|\nabla \sigma_{e}\|_{B^{s_{c}-2}_{2,1}}
\nonumber\\&\leq& (1+\tau)^{-s-\frac{\ell+1}{2}}\mathcal{E}_{1}^2(t). \label{R-E86}
\end{eqnarray}
Note that Lemmas \ref{lem2.1}-\ref{lem2.2} and Proposition \ref{prop2.2} have been used in (\ref{R-E85})-(\ref{R-E86}).
Furthermore, it follows from (\ref{R-E85})-(\ref{R-E86}) that
\begin{eqnarray}
\|\Lambda^{\ell}f_{2e}\|_{B^{s_{c}-2-\ell}_{2,1}}\lesssim (1+\tau)^{-\frac{s+\ell+1}{2}}\mathcal{E}_{0}(t)\mathcal{E}_{1}(t)+(1+\tau)^{-s-\frac{\ell+1}{2}}\mathcal{E}_{1}^2(t). \label{R-E87}
\end{eqnarray}
Similarly,
\begin{eqnarray}
\|\Lambda^{\ell}f_{2i}\|_{B^{s_{c}-2-\ell}_{2,1}}\lesssim (1+\tau)^{-\frac{s+\ell+1}{2}}\mathcal{E}_{0}(t)\mathcal{E}_{1}(t)+(1+\tau)^{-s-\frac{\ell+1}{2}}\mathcal{E}_{1}^2(t). \label{R-E88}
\end{eqnarray}
Finally, by combining inequalities (\ref{R-E83})-(\ref{R-E84}) and (\ref{R-E87})-(\ref{R-E88}), we obtain (\ref{R-E78}) directly.

On the other hand, in case that $\ell=s_{c}-2$, by applying the operator $\dot{\Delta}_{q}\Lambda^{\sigma_{c}-2}(q\in \mathbb{Z})$ to
the sum of two velocity equations and performing the similar procedure leading to (\ref{R-E83}), we obtain
\begin{eqnarray}
&&\|\Lambda^{s_{c}-2}(u_{e},u_{i})(t)\|_{\dot{B}^{0}_{2,1}}\nonumber\\&\leq&
e^{-t}\|(u_{e0},u_{i0}))\|_{B^{s_{c}-2}_{2,1}}+\int^{t}_{0}e^{-(t-\tau)}
\Big(\|(\Lambda^{s_{c}-2}(\nabla \sigma_{e},\nabla \sigma_{i})\|_{\dot{B}^{0}_{2,1}}
\nonumber\\&&+\|\Lambda^{s_{c}-2}(f_{2e},f_{2i})\|_{\dot{B}^{0}_{2,1}}\Big)d\tau. \label{R-E89}
\end{eqnarray}
Next, we revise the inequalities (\ref{R-E84})-(\ref{R-E86}) a little as follows:
\begin{eqnarray}
\|\Lambda^{s_{c}-2}(\nabla \sigma_{e},\nabla \sigma_{i})\|_{\dot{B}^{0}_{2,1}}\approx\|\Lambda^{s_{c}-1}(\sigma_{e},\sigma_{i})\|_{\dot{B}^{0}_{2,1}}
\lesssim  (1+\tau)^{-\frac{s+s_{c}-1}{2}}\mathcal{E}_{1}(t), \label{R-E90}
\end{eqnarray}
\begin{eqnarray}
\|\Lambda^{s_{c}-2}f_{2e}\|_{\dot{B}^{0}_{2,1}}&\approx&\|f_{2e}\|_{\dot{B}^{s_{c}-2}_{2,1}}
\nonumber\\&\lesssim& \|u_{e}\|_{\dot{B}^{s_{c}-1}_{2,1}}
\|\nabla u_{e}\|_{\dot{B}^{s_{c}-2}_{2,1}}+\|h(\sigma_{e})\|_{\dot{B}^{s_{c}-1}_{2,1}}\|\nabla \sigma_{e}\|_{\dot{B}^{s_{c}-2}_{2,1}}
\nonumber\\&\lesssim& \|u_{e}\|_{B^{s_{c}}_{2,1}}\|\Lambda^{s_{c}-1}u_{e}\|_{\dot{B}^{0}_{2,1}}
+\|\sigma_{e}\|_{B^{s_{c}}_{2,1}}\|\Lambda^{s_{c}-1}\sigma_{e}\|_{\dot{B}^{0}_{2,1}}
\nonumber\\&\leq& (1+\tau)^{-\frac{s+s_{c}-1}{2}}\mathcal{E}_{0}(t)\mathcal{E}_{1}(t)\label{R-E91}
\end{eqnarray}
and
\begin{eqnarray}
\|\Lambda^{s_{c}-2}f_{2i}\|_{\dot{B}^{0}_{2,1}}\leq (1+\tau)^{-\frac{s+s_{c}-1}{2}}\mathcal{E}_{0}(t)\mathcal{E}_{1}(t). \label{R-E92}
\end{eqnarray}
Hence, the inequality (\ref{R-E79}) is followed by (\ref{R-E89})-(\ref{R-E92}).
\end{proof}

Secondly, we shall prove the sharp time-weighted estimates for the difference of two velocities. Indeed, our idea of the proof is from the new observation that the linearized part of difference equations for velocities, densities along with the electron field $E$ exactly consist with a one-fluid Euler-Poisson equation. Therefore, we obtain the exponential decay for the linearized solutions. Furthermore, the desired decay estimate of the difference of velocities is shown by the frequency-localization Duhamel principle.

\begin{lem}\label{lem4.5}(Estimates for the difference of two velocities) Under the assumption of Proposition \ref{prop4.2}, we have
\begin{eqnarray}
&&\|\Lambda^{\ell}(u_{e}-u_{i})(t)\|_{B^{s_{c}-2-\ell}_{2,1}}\nonumber\\&\lesssim&
e^{-c_{3}t}\|\tilde{w}_{0}\|_{B^{s_{c}-2}_{2,1}}
+(1+t)^{-\frac{s+\ell+1}{2}}\mathcal{E}_{0}(t)\mathcal{E}(t)\nonumber\\&&+(1+t)^{-s-\frac{\ell+1}{2}}\mathcal{E}_{1}(t)\mathcal{E}(t) \label{R-E93}
\end{eqnarray}
for $0\leq\ell< s_{c}-2$;
\begin{eqnarray}
\|\Lambda^{s_{c}-2}(u_{e}-u_{i})(t)\|_{\dot{B}^{0}_{2,1}}\lesssim
e^{-c_{3}t}\|\tilde{w}_{0}\|_{B^{s_{c}-2}_{2,1}}+(1+t)^{-\frac{s+s_{c}-1}{2}}\mathcal{E}_{0}(t)\mathcal{E}(t), \label{R-E94}
\end{eqnarray}
where $c_{3}>0$ is some constant.
\end{lem}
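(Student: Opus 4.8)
The plan is to exploit the structural observation recorded in the Introduction, namely that the difference variables $\tilde{\sigma}=\sigma_{e}-\sigma_{i}$ and $\tilde{u}=u_{e}-u_{i}$, coupled with $E$, obey a \emph{one-fluid} Euler-Poisson system. Subtracting the third and fourth equations of (\ref{R-E48}) from the first and second gives
\begin{equation*}
\left\{
\begin{array}{l}
\partial_{t}\tilde{\sigma}+\mathrm{div}\tilde{u}=f_{1e}-f_{1i},\\
\partial_{t}\tilde{u}+\nabla\tilde{\sigma}+\tilde{u}-2E=f_{2e}-f_{2i},\\
\mathrm{div}E=\tilde{\sigma},\quad E=\nabla\mathit\Phi,
\end{array}
\right.
\end{equation*}
which is exactly the nonlinear one-fluid model (\ref{R-E1000}) driven by the quadratic source $\mathcal{N}_{d}:=(f_{1e}-f_{1i},f_{2e}-f_{2i})$. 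The decisive gain over the sum $u_{e}+u_{i}$ treated in Lemma \ref{lem4.4} is that the density gradient $\nabla\tilde{\sigma}$ is now \emph{internal} to the dynamics rather than an external forcing, and the constraint $\mathrm{div}E=\tilde{\sigma}$ produces a Klein-Gordon-type spectral gap. Consequently the associated linearized Green's operator $\widetilde{\mathcal{G}}(t)$ decays \emph{exponentially} at all frequencies, as is classical for the one-fluid Euler-Poisson equations (see \cite{FXZ,HMW1,LNX,X1}); in frequency-localized form I will use $\|\Delta_{q}\Lambda^{\ell}\widetilde{\mathcal{G}}(t)v\|_{L^{2}}\lesssim e^{-c_{3}t}\|\Delta_{q}\Lambda^{\ell}v\|_{L^{2}}$ for every $q\geq-1$, and likewise for the homogeneous blocks $\dot{\Delta}_{q}$ with $q\in\mathbb{Z}$.

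With this bound in hand I would apply the frequency-localization Duhamel principle of Lemma \ref{lem4.1} to the difference system, writing $\tilde{u}(t)=[\widetilde{\mathcal{G}}(t)\tilde{w}_{0}]_{\tilde{u}}+\int_{0}^{t}[\widetilde{\mathcal{G}}(t-\tau)\mathcal{N}_{d}(\tau)]_{\tilde{u}}\,d\tau$, where $[\cdot]_{\tilde{u}}$ extracts the velocity-difference component. The linear part contributes $e^{-c_{3}t}\|\tilde{w}_{0}\|_{B^{s_{c}-2}_{2,1}}$ directly from the exponential estimate above. It is worth stressing that \emph{no} polynomial $\mathcal{E}_{1}$ term appears in the linear contribution here, in sharp contrast to (\ref{R-E78}) for the sum of velocities, and this is precisely because $\nabla\tilde{\sigma}$ has been absorbed into $\widetilde{\mathcal{G}}(t)$. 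For the Duhamel integral I would dominate it, after multiplying by $2^{q(s_{c}-2-\ell)}$ and summing over $q\geq-1$, by $\int_{0}^{t}e^{-c_{3}(t-\tau)}\|\Lambda^{\ell}\mathcal{N}_{d}(\tau)\|_{B^{s_{c}-2-\ell}_{2,1}}\,d\tau$.

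It then remains to estimate the quadratic source. Since $f_{1a}=-\mathrm{div}(\sigma_{a}u_{a})$ and $f_{2a}=-u_{a}\cdot\nabla u_{a}-h(\sigma_{a})\nabla\sigma_{a}$, the differences are products of solution components, so I would reproduce the splitting of (\ref{R-E85})--(\ref{R-E86}): bound the homogeneous part through Propositions \ref{prop2.1} and \ref{prop2.2} to obtain the factor $(1+\tau)^{-\frac{s+\ell+1}{2}}\mathcal{E}_{0}(t)\mathcal{E}(t)$, and bound the residual $L^{2}$ part through the product law together with the interpolation of Lemma \ref{lem8.4} to obtain $(1+\tau)^{-s-\frac{\ell+1}{2}}\mathcal{E}_{1}(t)\mathcal{E}(t)$. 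The full functional $\mathcal{E}(t)=\mathcal{E}_{1}(t)+\mathcal{E}_{2}(t)$ (rather than $\mathcal{E}_{1}(t)$) enters because the velocity difference is a \emph{non-degenerate} quantity, so one factor of each product may legitimately be taken in the faster-decaying norm governed by $\mathcal{E}_{2}$. Feeding these into the Duhamel integral and invoking the elementary convolution estimate $\int_{0}^{t}e^{-c_{3}(t-\tau)}(1+\tau)^{-\alpha}\,d\tau\lesssim(1+t)^{-\alpha}$ yields (\ref{R-E93}). For the endpoint $\ell=s_{c}-2$ I would instead apply $\dot{\Delta}_{q}\Lambda^{s_{c}-2}$ with $q\in\mathbb{Z}$ and work entirely in the homogeneous scale, so that only the $\mathcal{E}_{0}(t)\mathcal{E}(t)$ contribution survives, exactly as in (\ref{R-E89})--(\ref{R-E92}), giving (\ref{R-E94}).

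The hard part will be the exponential bound on $\widetilde{\mathcal{G}}(t)$ \emph{uniformly down to the lowest frequencies}. This does not follow from the two-fluid Lyapunov analysis of Sect.~\ref{sec:3}, where the dissipation degenerates like $|\xi|^{2}/(1+|\xi|^{2})$ as $\xi\to0$. Rather, one must rerun the frequency-localized energy argument for the one-fluid system and observe that the constraint $\mathrm{div}E=\tilde{\sigma}$, together with the irrotationality $|\xi|^{2}|\hat{E}|^{2}\approx|\widehat{\mathrm{div}E}|^{2}=|\hat{\tilde{\sigma}}|^{2}$, renders the density dissipation non-degenerate at $\xi=0$ (the analogue of the $\frac{1}{1+|\xi|^{2}}|\widehat{\mathrm{div}E}|^{2}$ term survives at the origin), while $\tilde{u}$ is damped directly with order-one coefficient. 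The total dissipation then controls the full Lyapunov functional from below by a positive constant times itself, uniformly in $\xi$, which is exactly the spectral gap producing exponential decay. Granting this one-fluid estimate, the remaining low- and high-frequency nonlinear bounds are a routine repetition of those already carried out in Lemmas \ref{lem4.2}--\ref{lem4.4}.
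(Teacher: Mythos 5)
Your route is the paper's own: subtract the momentum equations so that $(\tilde{\sigma},\tilde{u},E)$ solves the one-fluid Euler--Poisson system (\ref{R-E95}), establish exponential decay of its linearization at every frequency (this is exactly the paper's Claim \ref{claim4.1}), then apply the frequency-localized Duhamel principle and estimate the quadratic sources as in (\ref{R-E85})--(\ref{R-E88}); your bookkeeping of the factors $\mathcal{E}_{0}(t)$, $\mathcal{E}_{1}(t)$, $\mathcal{E}(t)$ and the convolution estimate $\int_{0}^{t}e^{-c_{3}(t-\tau)}(1+\tau)^{-\alpha}d\tau\lesssim(1+t)^{-\alpha}$ are consistent with what the paper does.

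The gap sits in the step you yourself single out as the hard part, and the mechanism you give for it is insufficient. You argue that the spectral gap follows because the density dissipation is non-degenerate at $\xi=0$ and $\tilde{u}$ is damped with order-one coefficient, and that ``the total dissipation then controls the full Lyapunov functional.'' But the Lyapunov functional must contain $|\widehat{E}|^{2}$: your own Duhamel step requires the propagator bound for the full state, and the data norm $\|\tilde{w}_{0}\|_{B^{s_{c}-2}_{2,1}}$ includes $E_{0}$. On the constraint set one has $|\widehat{E}|=|\widehat{\tilde{\sigma}}|/|\xi|$, so at low frequency $|\widehat{E}|^{2}$ is arbitrarily large compared with $|\widehat{\tilde{\sigma}}|^{2}+|\widehat{\tilde{u}}|^{2}$, and dissipation of $\tilde{\sigma}$ and $\tilde{u}$ alone cannot bound the functional from below uniformly in $\xi$; equivalently, the reduced $2\times 2$ longitudinal system has eigenvalues with real part $-1/2$, but its eigenvector matrix degenerates as $\xi\to0$, so uniform exponential decay holds only in the weighted norm that carries $|\widehat{\tilde{\sigma}}|/|\xi|$, i.e.\ $|\widehat{E}|$. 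The missing ingredient is step (c) in the paper's proof of Claim \ref{claim4.1}: a separate low-frequency dissipation estimate for $E$, obtained from the cross term $-\frac{d}{dt}\int_{\mathbb{R}^{3}}\Delta_{-1}\Lambda^{\ell}E\cdot\Delta_{-1}\Lambda^{\ell}\tilde{u}$ combined with the evolution equation $\partial_{t}E=-\nabla\Delta^{-1}\mathrm{div}\tilde{u}$, which places $2\|\Delta_{-1}\Lambda^{\ell}E\|^{2}_{L^{2}}$ on the dissipative side ((\ref{R-E109})--(\ref{R-E110})). Only after adding this to the dissipations you list does the differential inequality (\ref{R-E111}) close, and Gronwall then yields the exponential decay your Duhamel argument needs; as written, your ``rerun the energy argument'' step stalls precisely at the $E$-component.
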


\begin{proof}
Set $$\tilde{u}=u_{e}-u_{i},\ \ \tilde{\sigma}=\sigma_{e}-\sigma_{i}. $$
Then, it follows from (\ref{R-E48}) that
\begin{equation}
\left\{
\begin{array}{l}
\partial_{t} \tilde{\sigma}+\mathrm{div}\tilde{u}=\tilde{f}_{1},\\
\partial_{t} \tilde{u}+\nabla\tilde{\sigma}+\tilde{u}=2E+\tilde{f}_{2},\\
\partial_{t} E=-\nabla\Delta^{-1}\mathrm{div}\tilde{u}+f_{3}
 \end{array}
\right. \label{R-E95}
\end{equation}
with the initial data
\begin{eqnarray}
(\tilde{\sigma}_{0},\tilde{u}_{0},E_{0})=(\sigma_{e0}-\sigma_{i0}, u_{e0}-u_{i0},\nabla\Delta^{-1}(\sigma_{e0}-\sigma_{i0})), \label{R-E955}
\end{eqnarray}
where
$$\tilde{f}_{1}=-\mathrm{div}\tilde{F}_{1}\ \ \ \mbox{with}\ \ \ \tilde{F}_{1}:=\sigma_{e}u_{e}-\sigma_{i}u_{i},
\ \ \ \tilde{f}_{2}=f_{2e}-f_{2i}.$$

To obtain (\ref{R-E93})-(\ref{R-E94}), we first prove the following claim.

\begin{claim} \label{claim4.1}
The solution of the linearized part of (\ref{R-E95}):
\begin{equation}
\left\{
\begin{array}{l}
\partial_{t} \tilde{\sigma}+\mathrm{div}\tilde{u}=0,\\
\partial_{t} \tilde{u}+\nabla\tilde{\sigma}+\tilde{u}=2E,\\
\partial_{t} E=-\nabla\Delta^{-1}\mathrm{div}\tilde{u},
 \end{array}
\right. \label{R-E951}
\end{equation}
decays exponentially:
\begin{eqnarray}
\|\Lambda^{\ell}(\tilde{\sigma},\tilde{u},E)(t)\|_{B^{s_{c}-2-\ell}_{2,1}}\lesssim e^{-c_{3}t}\|\Lambda^{\ell}(\tilde{\sigma}_{0},\tilde{u}_{0},E_{0})\|_{B^{s_{c}-2-\ell}_{2,1}} \label{R-E9555}
\end{eqnarray}
for $0\leq\ell< s_{c}-2$;
\begin{eqnarray}
\|(\tilde{\sigma},\tilde{u},E)(t)\|_{\dot{B}^{s_{c}-2}_{2,1}}\lesssim e^{-c_{3}t}\|(\tilde{\sigma}_{0},\tilde{u}_{0},E_{0})\|_{\dot{B}^{s_{c}-2}_{2,1}}.\label{R-E9556}
\end{eqnarray}
\end{claim}
Indeed, it suffices to show (\ref{R-E9555}), since (\ref{R-E9556}) can be dealt with the similar manner.
We note some useful equalities:
\begin{eqnarray}
\mathrm{div}\tilde{u}=-\mathrm{div}E_{t},\ \ \ \ \mathrm{div}E=\tilde{\sigma}. \label{R-E96}
\end{eqnarray}
The proof of (\ref{R-E9555}) is to capture the dissipation rates from contributions
of $(\tilde{\sigma},\tilde{u},E)$ in turn by using the low-frequency and high-frequency decomposition
methods.

\underline{\textit{(a) Estimate for the dissipation from $\tilde{u}$}}

Applying the operator $\Delta_{q}\Lambda^{\ell}(q\geq-1, \ 0\leq\ell<s_{c}-2)$ to the first two equations of (\ref{R-E951}) gives
\begin{equation}
\left\{
\begin{array}{l}
\partial_{t} \Delta_{q}\Lambda^{\ell}\tilde{\sigma}+\mathrm{div}\Delta_{q}\Lambda^{\ell}\tilde{u}=0,\\
\partial_{t} \Delta_{q}\Lambda^{\ell}\tilde{u}+\Delta_{q}\Lambda^{\ell}\nabla\tilde{\sigma}+\Delta_{q}\Lambda^{\ell}\tilde{u}=2\Delta_{q}\Lambda^{\ell}E.\\
 \end{array}
\right. \label{R-E97}
\end{equation}
Multiplying the first equation of (\ref{R-E97}) by $\Delta_{q}\Lambda^{\ell}\tilde{\sigma}$, the second one by $\Delta_{q}\Lambda^{\ell}\tilde{u}$ and
adding the resulting equations together, then integrating it over $\mathbb{R}^{3}$, we obtain
\begin{eqnarray}
&&\frac{1}{2}\frac{d}{dt}(\|\Delta_{q}\Lambda^{\ell}\tilde{\sigma}\|^2_{L^2}+\|\Delta_{q}\Lambda^{\ell}\tilde{u}\|^2_{L^2})+\|\Delta_{q}\Lambda^{\ell}\tilde{u}\|^2_{L^2}\nonumber\\
&=&2\int_{\mathbb{R}^{3}}\Delta_{q}\Lambda^{\ell}E\cdot\Delta_{q}\Lambda^{\ell}\tilde{u}, \label{R-E98}
\end{eqnarray}
where the electronic field term can be estimated by (\ref{R-E96}):
\begin{eqnarray}
&&\int_{\mathbb{R}^{3}}\Delta_{q}\Lambda^{\ell}E\cdot\Delta_{q}\Lambda^{\ell}\tilde{u}\nonumber\\&=&
-\int_{\mathbb{R}^{3}}\Delta_{q}\Lambda^{\ell}\mathit\Phi\Delta_{q}\Lambda^{\ell}\mathrm{div}\tilde{u}
\nonumber\\&=&\int_{\mathbb{R}^{3}}\Delta_{q}\Lambda^{\ell}\mathit\Phi\Delta_{q}\Lambda^{\ell}\mathrm{div}E_{t}
\nonumber\\&=&-\frac{1}{2}\frac{d}{dt}\|\Delta_{q}\Lambda^{\ell}E\|^2_{L^2}. \label{R-E99}
\end{eqnarray}
Then, combining (\ref{R-E98})-(\ref{R-E99}) gives
\begin{eqnarray}
&&\hspace{-10mm}\frac{1}{2}\frac{d}{dt}(\|\Delta_{q}\Lambda^{\ell}\tilde{\sigma}\|^2_{L^2}+\|\Delta_{q}\Lambda^{\ell}\tilde{u}\|^2_{L^2}
+2\|\Delta_{q}\Lambda^{\ell}E\|^2_{L^2})+\|\Delta_{q}\Lambda^{\ell}\tilde{u}\|^2_{L^2}\leq0. \label{R-E100}
\end{eqnarray}

\underline{\textit{(b) Estimate for the dissipation from $\tilde{\sigma}$}}

 To do this, we rewrite the second equation of (\ref{R-E951}) as follows:
\begin{equation}
\nabla\tilde{\sigma}=-(\partial_{t}\tilde{u}+\tilde{u}-2E). \label{R-E101}
\end{equation}
Then applying the operator $\Delta_{q}\Lambda^{\ell}$ to (\ref{R-E101}) and integrating
it over $\mathbb{R}^3$ after multiplying $\Delta_{q}\Lambda^{\ell}\nabla\tilde{\sigma}$, we have
\begin{eqnarray}
\|\Delta_{q}\Lambda^{\ell}\nabla\tilde{\sigma}\|^2_{L^2}
&=&-\int_{\mathbb{R}^{3}}\partial_{t}\Delta_{q}\Lambda^{\ell}\tilde{u}\cdot\Delta_{q}\Lambda^{\ell}\nabla\tilde{\sigma}
+2\int_{\mathbb{R}^{3}}\Delta_{q}\Lambda^{\ell}E\cdot\Delta_{q}\Lambda^{\ell}\nabla\tilde{\sigma}
\nonumber\\&&-\int_{\mathbb{R}^{3}}\Delta_{q}\Lambda^{\ell}\tilde{u}\cdot\Delta_{q}\Lambda^{\ell}\nabla\tilde{\sigma}. \label{R-E102}
\end{eqnarray}
Note that (\ref{R-E96}), integration by parts gives

\begin{eqnarray}
&&2\int_{\mathbb{R}^{3}}\Delta_{q}\Lambda^{\ell}E\cdot\Delta_{q}\Lambda^{\ell}\nabla\tilde{\sigma}\nonumber\\&=&
-2\int_{\mathbb{R}^{3}}\Delta_{q}\Lambda^{\ell}\mathrm{div}E\cdot\Delta_{q}\Lambda^{\ell}\tilde{\sigma}
\nonumber\\&=&-2\|\Delta_{q}\Lambda^{\ell}\tilde{\sigma}\|^2_{L^2} \label{R-E103}
\end{eqnarray}
and
\begin{eqnarray}
&&-\int_{\mathbb{R}^{3}}\partial_{t}\Delta_{q}\Lambda^{\ell}\tilde{u}\cdot\Delta_{q}\Lambda^{\ell}\nabla\tilde{\sigma}
\nonumber\\&=&\int_{\mathbb{R}^{3}}\Delta_{q}\Lambda^{\ell}\mathrm{div}\tilde{u}_{t}\Delta_{q}\Lambda^{\ell}\tilde{\sigma}
\nonumber\\&=&\frac{d}{dt}\int_{\mathbb{R}^{3}}\Delta_{q}\Lambda^{\ell}\mathrm{div}\tilde{u}\Delta_{q}\Lambda^{\ell}\tilde{\sigma}-
\int_{\mathbb{R}^{3}}\Delta_{q}\Lambda^{\ell}\mathrm{div}\tilde{u}\Delta_{q}\Lambda^{\ell}\tilde{\sigma}_{t}
\nonumber\\&=&\frac{d}{dt}\int_{\mathbb{R}^{3}}\Delta_{q}\Lambda^{\ell}\mathrm{div}\tilde{u}\Delta_{q}\Lambda^{\ell}\tilde{\sigma}
+\|\Delta_{q}\Lambda^{\ell}\mathrm{div}\tilde{u}\|^2_{L^2}. \label{R-E104}
\end{eqnarray}
Substituting (\ref{R-E103})-(\ref{R-E104}) into (\ref{R-E102}), with the aid of H\"{o}lder inequality, we further get
\begin{eqnarray}
&&-\frac{d}{dt}\int_{\mathbb{R}^{3}}\Delta_{q}\Lambda^{\ell}\mathrm{div}\tilde{u}\Delta_{q}\Lambda^{\ell}\tilde{\sigma}+2\|\Delta_{q}\Lambda^{\ell}\tilde{\sigma}\|^2_{L^2}+\|\Delta_{q}\Lambda^{\ell}\nabla\tilde{\sigma}\|^2_{L^2}
\nonumber\\&\leq&\|\Delta_{q}\Lambda^{\ell}\mathrm{div}\tilde{u}\|^2_{L^2}
+\|\Delta_{q}\Lambda^{\ell}\tilde{u}\|_{L^2}\|\Delta_{q}\Lambda^{\ell}\nabla\tilde{\sigma}\|_{L^2}. \label{R-E105}
\end{eqnarray}

By using Lemma \ref{lem2.1}, we are led to the high-frequency estimate and low-frequency one, respectively:
\begin{eqnarray}
-\frac{d}{dt}\int_{\mathbb{R}^{3}}\Delta_{q}\Lambda^{\ell}\mathrm{div}\tilde{u}\Delta_{q}\Lambda^{\ell}\tilde{\sigma}+2^{2q}\|\Delta_{q}\Lambda^{\ell}\tilde{\sigma}\|^2_{L^2}
\lesssim2^{2q}\|\Delta_{q}\Lambda^{\ell}\tilde{u}\|^2_{L^2}\ \  (q\geq0). \label{R-E106}
\end{eqnarray}
and
\begin{eqnarray}
-\frac{d}{dt}\int_{\mathbb{R}^{3}}\Delta_{-1}\Lambda^{\ell}\mathrm{div}\tilde{u}\Delta_{-1}\Lambda^{\ell}\tilde{\sigma}+\|\Delta_{-1}\Lambda^{\ell}\tilde{\sigma}\|^2_{L^2}
\lesssim\|\Delta_{-1}\Lambda^{\ell}\tilde{u}\|^2_{L^2}. \label{R-E107}
\end{eqnarray}

\underline{\textit{(c) Estimate for the dissipation from $E$}}

From the equation $\mathrm{div}E=\tilde{\sigma}$ and the irrotationality of $E$, we can get the high-frequency estimate
\begin{eqnarray}
2^{q}\|\Delta_{q}\Lambda^{\ell}E\|^2_{L^2}\lesssim \|\Delta_{q}\Lambda^{\ell}\tilde{\sigma}\|_{L^2}\|\Delta_{q}\Lambda^{\ell}E\|_{L^2} \ (q\geq0). \label{R-E108}
\end{eqnarray}
For the low-frequency case, we need to perform the different estimate. Precisely,
\begin{eqnarray}
&&-\frac{d}{dt}\int_{\mathbb{R}^{3}}\Delta_{-1}\Lambda^{\ell}E\cdot\Delta_{-1}\Lambda^{\ell}\tilde{u}\nonumber\\
&=&-\int_{\mathbb{R}^{3}}\Delta_{-1}\Lambda^{\ell}E_{t}\cdot\Delta_{-1}\Lambda^{\ell}\tilde{u}-\int_{\mathbb{R}^{3}}\Delta_{-1}\Lambda^{\ell}E\cdot\Delta_{-1}\Lambda^{\ell}\tilde{u}_{t}
\nonumber\\&=&\int_{\mathbb{R}^{3}}\nabla\Delta^{-1}\mathrm{div}\Delta_{-1}\Lambda^{\ell}\tilde{u}\cdot\Delta_{-1}\Lambda^{\ell}\tilde{u}
-2\|\Delta^{-1}\Lambda^{\ell}E\|^2_{L^2}\nonumber\\&&+\int_{\mathbb{R}^{3}}\Delta_{-1}\Lambda^{\ell}E\cdot\Delta_{-1}\Lambda^{\ell}(\tilde{u}+\nabla\tilde{\sigma}), \label{R-E109}
\end{eqnarray}
which leads to
\begin{eqnarray}
&&-\frac{d}{dt}\int_{\mathbb{R}^{3}}\Delta_{-1}\Lambda^{\ell}E\cdot\Delta_{-1}\Lambda^{\ell}\tilde{u}+2\|\Delta^{-1}\Lambda^{\ell}E\|^2_{L^2}
\nonumber\\&\leq&\|\Delta_{-1}\Lambda^{\ell}\tilde{u}\|^2_{L^2}
+(\|\Delta_{-1}\Lambda^{\ell}\tilde{u}\|_{L^2}+\|\Delta_{-1}\Lambda^{\ell}\tilde{\sigma}\|_{L^2})
\|\Delta^{-1}\Lambda^{\ell}E\|_{L^2}. \label{R-E110}
\end{eqnarray}

The next step is to combine above inequalities (\ref{R-E100}) and (\ref{R-E106})-(\ref{R-E108})
and (\ref{R-E110}).  We omit the details for brevity. Furthermore,
we can conclude that there exists  a constant $c_{3}>0$ such that the following differential inequality holds
\begin{eqnarray}
\frac{d}{dt}Q(t)+c_{3}\|\Lambda^{\ell}(\tilde{u},\tilde{\sigma},E)\|_{B^{s_{c}-2-\ell}_{2,1}}\leq 0, \label{R-E111}
\end{eqnarray}
where
$$Q(t)\approx\|\Lambda^{\ell}(\tilde{u},\tilde{\sigma},E)(t,\cdot)\|_{B^{s_{c}-2-\ell}_{2,1}}.$$
The standard Gronwall's inequality implies (\ref{R-E9555}) immediately. Hence the proof of Claim \ref{claim4.1} is complete.

By virtue of the frequency-localization Duhamel principle related to the system (\ref{R-E95})-(\ref{R-E955}), we can arrive at
\begin{eqnarray}
&&\|\Lambda^{\ell}\tilde{u}\|_{B^{s_{c}-2-\ell}_{2,1}}\nonumber\\&\leq&\|\Lambda^{\ell}(\tilde{u},\tilde{\sigma},E)\|_{B^{s_{c}-2-\ell}_{2,1}}\nonumber\\&\lesssim & \hspace{-2mm} e^{-c_{3}t}\|\tilde{w}_{0}\|_{B^{s_{c}-2}_{2,1}}
+\int^{t}_{0}e^{-c_{3}(t-\tau)}
\|\Lambda^{\ell}(\tilde{f}_{1},\tilde{f}_{2},\tilde{F}_{1},f_{3})(\tau)\|_{B^{s_{c}-2-\ell}_{2,1}}d\tau. \label{R-E112}
\end{eqnarray}
Then doing the same procedure leading to (\ref{R-E87})-(\ref{R-E88}) gives (\ref{R-E93}). Additionally,
(\ref{R-E94}) is followed by (\ref{R-E9556}) similarly. Hence, the proof of Lemma \ref{lem4.4} is complete ultimately.
\end{proof}

Having Lemmas \ref{lem4.4}-\ref{lem4.5}, by the elementary formula
$$u_{e}=\frac{1}{2}\Big\{(u_{e}+u_{i})+(u_{e}-u_{i})\Big\},\ \ \  u_{i}=\frac{1}{2}\Big\{(u_{e}+u_{i})-(u_{e}-u_{i})\Big\},$$
we obtain the time-weighted energy estimates for $(u_{e},u_{i})$.

\begin{cor}\label{cor4.1}
Under the assumption of Proposition \ref{prop4.2}, it holds that
\begin{eqnarray}
&&\|\Lambda^{\ell}(u_{e},u_{i})(t)\|_{B^{s_{c}-2-\ell}_{2,1}}\nonumber\\&\lesssim&
e^{-c_{4}t}\|\tilde{w}_{0}\|_{B^{s_{c}-2}_{2,1}}+(1+t)^{-\frac{s+\ell+1}{2}}\mathcal{E}_{1}(t)
+(1+t)^{-\frac{s+\ell+1}{2}}\mathcal{E}_{0}(t)\mathcal{E}(t)\nonumber\\&&+(1+t)^{-s-\frac{\ell+1}{2}}\mathcal{E}_{1}(t)\mathcal{E}(t) \label{R-E113}
\end{eqnarray}
for $0\leq\ell<s_{c}-2$;
\begin{eqnarray}
&&\|\Lambda^{s_{c}-2}(u_{e},u_{i})(t)\|_{\dot{B}^{0}_{2,1}}\nonumber\\ &\lesssim&
\hspace{-3mm} e^{-c_{4}t}\|\tilde{w}_{0}\|_{B^{s_{c}-2}_{2,1}}+(1+t)^{-\frac{s+\ell+1}{2}}\mathcal{E}_{1}(t)+(1+t)^{-\frac{s+s_{c}-1}{2}}\mathcal{E}_{0}(t)\mathcal{E}(t), \label{R-E114}
\end{eqnarray}
where the constant $c_{4}:=\min\{1,c_{3}\}.$
\end{cor}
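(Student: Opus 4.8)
The plan is to obtain Corollary \ref{cor4.1} as an immediate bookkeeping consequence of Lemmas \ref{lem4.4} and \ref{lem4.5}, with no new analysis required. The starting point is the elementary algebraic identity
$$u_{e}=\frac{1}{2}\Big\{(u_{e}+u_{i})+(u_{e}-u_{i})\Big\},\qquad u_{i}=\frac{1}{2}\Big\{(u_{e}+u_{i})-(u_{e}-u_{i})\Big\},$$
so that, since $\Lambda^{\ell}$ is linear and the Besov norms are subadditive, the triangle inequality gives
$$\|\Lambda^{\ell}(u_{e},u_{i})(t)\|_{X}\leq\|\Lambda^{\ell}(u_{e}+u_{i})(t)\|_{X}+\|\Lambda^{\ell}(u_{e}-u_{i})(t)\|_{X},$$
where $X=B^{s_{c}-2-\ell}_{2,1}$ when $0\leq\ell<s_{c}-2$ and $X=\dot{B}^{0}_{2,1}$ when $\ell=s_{c}-2$. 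Thus the entire task reduces to adding the right-hand sides of (\ref{R-E78}) and (\ref{R-E93}) term by term, and likewise (\ref{R-E79}) and (\ref{R-E94}) in the borderline case.

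First I would merge the two linear decay contributions. Lemma \ref{lem4.4} supplies the exponential term $e^{-t}\|(u_{e0},u_{i0})\|_{B^{s_{c}-2}_{2,1}}$ while Lemma \ref{lem4.5} supplies $e^{-c_{3}t}\|\tilde{w}_{0}\|_{B^{s_{c}-2}_{2,1}}$. Since $\|(u_{e0},u_{i0})\|_{B^{s_{c}-2}_{2,1}}\leq\|\tilde{w}_{0}\|_{B^{s_{c}-2}_{2,1}}$ and both $e^{-t}$ and $e^{-c_{3}t}$ are bounded by $e^{-c_{4}t}$ with $c_{4}:=\min\{1,c_{3}\}$, these collapse into the single term $e^{-c_{4}t}\|\tilde{w}_{0}\|_{B^{s_{c}-2}_{2,1}}$ appearing in (\ref{R-E113})--(\ref{R-E114}). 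The purely linear decay term $(1+t)^{-\frac{s+\ell+1}{2}}\mathcal{E}_{1}(t)$ comes from Lemma \ref{lem4.4} and is carried over unchanged.

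It then remains to reconcile the quadratic contributions, where the only point to exploit is the monotonicity $\mathcal{E}_{1}(t)\leq\mathcal{E}(t)$ inherent in the definition $\mathcal{E}(t)=\mathcal{E}_{1}(t)+\mathcal{E}_{2}(t)$. The sum-of-velocities estimate contributes $(1+t)^{-\frac{s+\ell+1}{2}}\mathcal{E}_{0}(t)\mathcal{E}_{1}(t)$ and $(1+t)^{-s-\frac{\ell+1}{2}}\mathcal{E}_{1}^{2}(t)$, which I would bound from above by $(1+t)^{-\frac{s+\ell+1}{2}}\mathcal{E}_{0}(t)\mathcal{E}(t)$ and $(1+t)^{-s-\frac{\ell+1}{2}}\mathcal{E}_{1}(t)\mathcal{E}(t)$ respectively; meanwhile the difference-of-velocities estimate already delivers these same two expressions in exactly the stated form. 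Summing and invoking $\mathcal{E}_{1}\leq\mathcal{E}$ once more to absorb the duplicates yields (\ref{R-E113}) for $0\leq\ell<s_{c}-2$. The borderline case $\ell=s_{c}-2$ proceeds identically, with the inhomogeneous norm replaced throughout by $\dot{B}^{0}_{2,1}$ via (\ref{R-E79}) and (\ref{R-E94}), giving (\ref{R-E114}). I do not anticipate a genuine obstacle here: the corollary is purely a combination of the two lemmas, and the single subtlety is the correct choice $c_{4}=\min\{1,c_{3}\}$, so that the slower of the two exponential rates governs the merged linear term.
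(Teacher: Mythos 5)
Your proposal is correct and is essentially identical to the paper's own argument: the paper likewise deduces Corollary \ref{cor4.1} directly from Lemmas \ref{lem4.4}--\ref{lem4.5} via the identity $u_{e}=\frac{1}{2}\{(u_{e}+u_{i})+(u_{e}-u_{i})\}$, $u_{i}=\frac{1}{2}\{(u_{e}+u_{i})-(u_{e}-u_{i})\}$, the triangle inequality, and the absorptions $\mathcal{E}_{1}\leq\mathcal{E}$, $\|(u_{e0},u_{i0})\|_{B^{s_{c}-2}_{2,1}}\leq\|\tilde{w}_{0}\|_{B^{s_{c}-2}_{2,1}}$, with $c_{4}=\min\{1,c_{3}\}$ governing the merged exponential term. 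No gaps; your bookkeeping of the quadratic terms and of the borderline case $\ell=s_{c}-2$ matches the intended proof.
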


Therefore, having above preparations, the proofs of Propositions \ref{prop4.1}-\ref{prop4.2} can be finished as follows.\\

\noindent \textbf{\textit{The proofs of Propositions \ref{prop4.1}-\ref{prop4.2}}.}
From Lemmas \ref{lem4.2}-\ref{lem4.3}, we deduce that
\begin{eqnarray}
\mathcal{E}_{1}(t)\lesssim \mathcal{M}_{0}+\mathcal{E}^{2}(t)+\mathcal{E}_{0}(t)\mathcal{E}_{1}(t). \label{R-E115}
\end{eqnarray}
On the other hand, it follows from Corollary \ref{cor4.1} that
\begin{eqnarray}
\mathcal{E}_{2}(t)\lesssim \mathcal{M}_{0}+\mathcal{E}_{1}(t)+\mathcal{E}_{0}(t)\mathcal{E}(t)+\mathcal{E}_{1}(t)\mathcal{E}(t). \label{R-E116}
\end{eqnarray}
Therefore, (\ref{R-E115})-(\ref{R-E116}) leads to (\ref{R-E54}) directly.

Furthermore, it follows from Theorem \ref{thm1.1} that
$\mathcal{E}_{0}(t)\lesssim \|\tilde{w}_{0}\|_{B^{s_{c}}_{2,1}}\lesssim \mathcal{M}_{0}$. Thus, if $\mathcal{M}_{0}$ is sufficient small,
then
\begin{eqnarray}
\mathcal{E}(t)\lesssim \mathcal{M}_{0}+\mathcal{E}^{2}(t), \label{R-E117}
\end{eqnarray}
which can deduce that $\mathcal{E}(t)\lesssim \mathcal{M}_{0}$, provided that $\mathcal{M}_{0}$ is sufficient small.

Finally, it follows from $\mathrm{div}E=n_{e}-n_{i}$ that
\begin{eqnarray}
\|\Lambda^{\ell}(n_{e}-n_{i})\|_{B^{s_{c}-2-\ell}_{2,1}}\leq\|\Lambda^{\ell+1}E\|_{B^{s_{c}-1-(\ell+1)}_{2,1}}\leq (1+t)^{-\frac{s+\ell+1}{2}}\mathcal{E}_{1}(t)\label{R-E118}
\end{eqnarray}
for $0\leq\ell<s_{c}-2$, and
\begin{eqnarray}
\|\Lambda^{(s_{c}-2)}(n_{e}-n_{i})\|_{\dot{B}^{0}_{2,1}}\leq\|\Lambda^{s_{c}-1}E\|_{\dot{B}^{0}_{2,1}}\leq(1+t)^{-\frac{s+s_{c}-1}{2}}\mathcal{E}_{1}(t).
\label{R-E119}
\end{eqnarray}

\section{Appendix}\setcounter{equation}{0}\label{sec:5}
For the convenience of reader, we list interpolation inequalities related to Besov spaces, actually, which
parallel the work \cite{SS}. However, we make some simplicity for use,
since their inequalities are related to the mixed spaces containing the microscopic velocity.

\begin{lem}\label{lem8.1}
Suppose $k\geq0$ and $m,\varrho>0$. Then the following inequality holds
\begin{eqnarray}
\|f\|_{\dot{B}^{k}_{2,1}}\lesssim \|f\|^{\theta}_{\dot{B}^{k+m}_{2,\infty}}\|f\|^{1-\theta}_{\dot{B}^{-\varrho}_{2,\infty}} \ \  \mbox{with}\ \ \ \theta=\frac{\varrho+k}{\varrho+k+m}. \label{R-E120}
\end{eqnarray}
\end{lem}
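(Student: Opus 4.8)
The plan is to exploit the definition of the $\dot B^s_{2,1}$ norm as a sum over dyadic blocks and to split that sum at a cutoff frequency $2^N$ that will later be optimized. Write
\[
\|f\|_{\dot B^{k}_{2,1}}=\sum_{q\in\mathbb Z}2^{qk}\|\dot\Delta_q f\|_{L^2}
=\sum_{q<N}2^{qk}\|\dot\Delta_q f\|_{L^2}+\sum_{q\geq N}2^{qk}\|\dot\Delta_q f\|_{L^2},
\]
for an integer $N$ to be chosen. On the low-frequency range $q<N$ I would control each block by the $\dot B^{-\varrho}_{2,\infty}$ norm, writing $2^{qk}\|\dot\Delta_q f\|_{L^2}=2^{q(k+\varrho)}\bigl(2^{-q\varrho}\|\dot\Delta_q f\|_{L^2}\bigr)\leq 2^{q(k+\varrho)}\|f\|_{\dot B^{-\varrho}_{2,\infty}}$; since $k+\varrho>0$ the geometric series $\sum_{q<N}2^{q(k+\varrho)}$ converges and is $\lesssim 2^{N(k+\varrho)}$.

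On the high-frequency range $q\geq N$ I would instead extract the $\dot B^{k+m}_{2,\infty}$ norm via $2^{qk}\|\dot\Delta_q f\|_{L^2}=2^{-qm}\bigl(2^{q(k+m)}\|\dot\Delta_q f\|_{L^2}\bigr)\leq 2^{-qm}\|f\|_{\dot B^{k+m}_{2,\infty}}$, and since $m>0$ the tail $\sum_{q\geq N}2^{-qm}$ is $\lesssim 2^{-Nm}$. Combining the two estimates yields
\[
\|f\|_{\dot B^{k}_{2,1}}\lesssim 2^{N(k+\varrho)}\|f\|_{\dot B^{-\varrho}_{2,\infty}}
+2^{-Nm}\|f\|_{\dot B^{k+m}_{2,\infty}},
\]
with constants depending only on $k,m,\varrho$. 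This reduces the lemma to optimizing a two-term bound of the shape $A\,2^{N\alpha}+B\,2^{-N\beta}$ with $\alpha=k+\varrho>0$, $\beta=m>0$.

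To finish I would balance the two contributions by choosing $N$ so that $2^{N(k+\varrho+m)}\approx \|f\|_{\dot B^{k+m}_{2,\infty}}/\|f\|_{\dot B^{-\varrho}_{2,\infty}}$; substituting this value of $N$ into either term produces exactly $\|f\|_{\dot B^{k+m}_{2,\infty}}^{\theta}\|f\|_{\dot B^{-\varrho}_{2,\infty}}^{1-\theta}$ with $\theta=(\varrho+k)/(\varrho+k+m)$, which is the claimed inequality. The only genuinely nonroutine point is that $N$ is constrained to be an integer, so I cannot hit the optimal real value exactly; the remedy is to take $N$ to be the nearest integer to the optimal one, which perturbs $2^{N\alpha}$ and $2^{-N\beta}$ only by bounded multiplicative factors $2^{\pm\alpha},2^{\pm\beta}$ and is therefore absorbed into the implicit constant. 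One should also record the degenerate cases: if $\|f\|_{\dot B^{-\varrho}_{2,\infty}}=0$ or $\|f\|_{\dot B^{k+m}_{2,\infty}}=0$ the estimate is trivial (letting $N\to\mp\infty$), so the balancing step is legitimate precisely when both norms are finite and positive.
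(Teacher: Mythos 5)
Your proof is correct. The paper itself gives no argument for this lemma --- it is simply listed in the Appendix with a pointer to the interpolation inequalities of Sohinger and Strain \cite{SS} --- and your high/low frequency splitting of the dyadic sum at an optimized cutoff $2^{N}$, with the integer-valued $N$ chosen to balance the two geometric tails, is exactly the standard argument underlying that reference, so your write-up correctly supplies the missing proof.
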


\begin{lem}\label{lem8.2}
Suppose $k\geq0$ and $m,\varrho>0$.  Then the following inequality holds
\begin{eqnarray}
\|\Lambda^{k}f\|_{L^2}\lesssim \|\Lambda^{k+m}f\|^{\theta}_{L^2}\|f\|^{1-\theta}_{\dot{B}^{-\varrho}_{2,\infty}} \ \  \mbox{with}\ \ \ \theta=\frac{\varrho+k}{\varrho+k+m}, \label{R-E121}
\end{eqnarray}
where (\ref{R-E121}) is also true for $\partial^{\alpha}$ with $|\alpha|=k\ (k$ nonnegative integer).
\end{lem}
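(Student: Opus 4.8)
The plan is to establish the homogeneous interpolation inequality (\ref{R-E121}) by a Littlewood--Paley dyadic decomposition combined with a frequency-threshold optimization, in the spirit of the classical Gagliardo--Nirenberg argument. First I would use that, the spaces being built on $L^2$, Plancherel's theorem and the almost-orthogonality of the blocks give $\|\Lambda^{k}f\|_{L^2}^2\approx\sum_{q\in\mathbb{Z}}2^{2qk}\|\dot{\Delta}_q f\|_{L^2}^2$ and $\|\Lambda^{k+m}f\|_{L^2}^2\approx\sum_{q\in\mathbb{Z}}2^{2q(k+m)}\|\dot{\Delta}_q f\|_{L^2}^2$, where the annular Bernstein estimate of Lemma \ref{lem2.1}(ii) yields $\|\Lambda^{\alpha}\dot{\Delta}_q f\|_{L^2}\approx 2^{q\alpha}\|\dot{\Delta}_q f\|_{L^2}$. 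From the very definition of the Besov norm one also has the pointwise-in-$q$ bound $\|\dot{\Delta}_q f\|_{L^2}\le 2^{q\varrho}\|f\|_{\dot{B}^{-\varrho}_{2,\infty}}$. We may assume both norms on the right of (\ref{R-E121}) are finite and nonzero, as the inequality is otherwise trivial.

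The key step is to split the sum defining $\|\Lambda^k f\|_{L^2}^2$ at an integer threshold $N$ to be chosen. On the low-frequency range $q\le N$ I would insert the negative-regularity bound, obtaining $\sum_{q\le N}2^{2qk}\|\dot{\Delta}_q f\|_{L^2}^2\lesssim \|f\|_{\dot{B}^{-\varrho}_{2,\infty}}^2\sum_{q\le N}2^{2q(k+\varrho)}\lesssim 2^{2N(k+\varrho)}\|f\|_{\dot{B}^{-\varrho}_{2,\infty}}^2$, where convergence of the geometric series uses $k+\varrho>0$. On the high-frequency range $q>N$ I would factor out $2^{-2qm}$ and use the higher-derivative norm, obtaining $\sum_{q>N}2^{2qk}\|\dot{\Delta}_q f\|_{L^2}^2\le 2^{-2Nm}\sum_{q>N}2^{2q(k+m)}\|\dot{\Delta}_q f\|_{L^2}^2\lesssim 2^{-2Nm}\|\Lambda^{k+m}f\|_{L^2}^2$, where $m>0$ guarantees the decay factor. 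Adding the two contributions gives
\[
\|\Lambda^k f\|_{L^2}^2\lesssim 2^{2N(k+\varrho)}\|f\|_{\dot{B}^{-\varrho}_{2,\infty}}^2+2^{-2Nm}\|\Lambda^{k+m}f\|_{L^2}^2 .
\]

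To conclude I would optimize the right-hand side over $N$, picking $N$ so that the two terms are comparable, namely $2^{2N(k+\varrho+m)}\approx \|\Lambda^{k+m}f\|_{L^2}^2/\|f\|_{\dot{B}^{-\varrho}_{2,\infty}}^2$; since $N$ ranges over integers this balance is attainable up to a harmless multiplicative constant. Substituting it yields $\|\Lambda^k f\|_{L^2}^2\lesssim \|\Lambda^{k+m}f\|_{L^2}^{2\theta}\|f\|_{\dot{B}^{-\varrho}_{2,\infty}}^{2(1-\theta)}$ with exactly $\theta=(\varrho+k)/(\varrho+k+m)$, and taking square roots gives (\ref{R-E121}). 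The final assertion is immediate: for a multi-index with $|\alpha|=k$ one has $|\xi^{\alpha}|\le|\xi|^{k}$ pointwise, so $\|\partial^{\alpha}f\|_{L^2}\le\|\Lambda^{k}f\|_{L^2}$ by Plancherel, and the bound for $\partial^{\alpha}$ follows from the one just proved. I do not anticipate a genuine obstacle here; the only points requiring care are verifying that the threshold optimization reproduces the precise exponent $\theta$ and that both geometric series converge, both of which are guaranteed by the hypotheses $k\ge0$, $m>0$, and $\varrho>0$.
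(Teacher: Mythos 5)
Your proof is correct. There is nothing in the paper to compare it against line by line: the appendix states Lemma \ref{lem8.2} without proof, as one of several interpolation inequalities taken (in simplified form) from \cite{SS}, so your argument in effect supplies the omitted proof. The method you use --- Plancherel and almost orthogonality to reduce $\|\Lambda^{k}f\|_{L^2}^2$ to a dyadic sum, the bound $\|\dot{\Delta}_{q}f\|_{L^2}\leq 2^{q\varrho}\|f\|_{\dot{B}^{-\varrho}_{2,\infty}}$ below a threshold $N$, the factor $2^{-2qm}$ above it, then optimization in $N$ --- is the standard one for the $\ell^{\infty}$-endpoint space $\dot{B}^{-\varrho}_{2,\infty}$: a direct H\"{o}lder-in-$q$ argument would only produce the smaller space $\dot{B}^{-\varrho}_{2,2}$ on the right, so the threshold splitting is genuinely needed. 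Your hypotheses $k\geq0$, $\varrho>0$, $m>0$ enter exactly where you say, in the convergence of the two geometric series, and rounding $N$ to an integer costs only a constant depending on $k,m,\varrho$. One remark: within the paper's own toolkit a shorter derivation is available, namely apply Lemma \ref{lem8.1} (which has the same $\theta$) and conclude via the embeddings $\dot{B}^{k}_{2,1}\hookrightarrow\dot{B}^{k}_{2,2}$ and $\dot{B}^{k+m}_{2,2}\hookrightarrow\dot{B}^{k+m}_{2,\infty}$, which give $\|\Lambda^{k}f\|_{L^2}\lesssim\|f\|_{\dot{B}^{k}_{2,1}}$ and $\|f\|_{\dot{B}^{k+m}_{2,\infty}}\lesssim\|\Lambda^{k+m}f\|_{L^2}$; but Lemma \ref{lem8.1} is itself proved by precisely your splitting argument, so the mathematical content is the same. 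Your reduction of the $\partial^{\alpha}$ case via $|\xi^{\alpha}|\leq|\xi|^{k}$ and Plancherel is also correct.
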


\begin{lem}\label{lem8.3}
Suppose that $m\neq\varrho$. Then the following inequality holds
\begin{eqnarray}
\|f\|_{\dot{B}^{k}_{p,1}}\lesssim \|f\|^{1-\theta}_{\dot{B}^{m}_{r,\infty}}\|f\|^{\theta}_{\dot{B}^{\varrho}_{r,\infty}}, \label{R-E122}
\end{eqnarray}
where $0<\theta<1$,  $1\leq r\leq p\leq\infty$ and $$k+n\Big(\frac{1}{r}-\frac{1}{p}\Big)=m(1-\theta)+\varrho\theta.$$
\end{lem}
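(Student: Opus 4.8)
The plan is to reduce everything to a dyadic summation and then optimize the split point of the sum, which is the classical Gagliardo--Nirenberg mechanism transplanted to Besov spaces. First I would write out the definition of the target norm,
$$\|f\|_{\dot{B}^{k}_{p,1}}=\sum_{q\in\mathbb{Z}}2^{qk}\|\dot{\Delta}_{q}f\|_{L^{p}},$$
and pass from the $L^{p}$ norm of each localized block to an $L^{r}$ norm. Since $\dot{\Delta}_{q}f$ is spectrally supported in the annulus $|\xi|\sim 2^{q}$ and $1\leq r\leq p\leq\infty$, the Bernstein inequality of Lemma \ref{lem2.1}(i) (with exponent $\alpha=0$ and $\lambda\sim2^{q}$) gives $\|\dot{\Delta}_{q}f\|_{L^{p}}\lesssim 2^{qn(1/r-1/p)}\|\dot{\Delta}_{q}f\|_{L^{r}}$. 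Writing $\kappa:=k+n(1/r-1/p)$, the hypothesis on the indices reads precisely $\kappa=(1-\theta)m+\theta\varrho$, so the problem is reduced to proving
$$\sum_{q\in\mathbb{Z}}2^{q\kappa}\|\dot{\Delta}_{q}f\|_{L^{r}}\lesssim \|f\|^{1-\theta}_{\dot{B}^{m}_{r,\infty}}\|f\|^{\theta}_{\dot{B}^{\varrho}_{r,\infty}}.$$

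Next I would introduce the two elementary bounds coming straight from the definition of the $\dot{B}^{\cdot}_{r,\infty}$ norms, namely $\|\dot{\Delta}_{q}f\|_{L^{r}}\leq 2^{-qm}A$ and $\|\dot{\Delta}_{q}f\|_{L^{r}}\leq 2^{-q\varrho}B$, where $A:=\|f\|_{\dot{B}^{m}_{r,\infty}}$ and $B:=\|f\|_{\dot{B}^{\varrho}_{r,\infty}}$. Assuming without loss of generality $m<\varrho$ (the reverse case is symmetric), one computes $\kappa-m=\theta(\varrho-m)>0$ and $\kappa-\varrho=-(1-\theta)(\varrho-m)<0$. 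I then split the sum at an integer $N$: on $\{q\leq N\}$ I use the $A$-bound, so that $\sum_{q\leq N}2^{q(\kappa-m)}A\lesssim 2^{N(\kappa-m)}A$ by summing the convergent geometric series, while on $\{q> N\}$ I use the $B$-bound, giving $\sum_{q> N}2^{q(\kappa-\varrho)}B\lesssim 2^{N(\kappa-\varrho)}B$. This yields
$$\|f\|_{\dot{B}^{k}_{p,1}}\lesssim 2^{N\theta(\varrho-m)}A+2^{-N(1-\theta)(\varrho-m)}B.$$

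Finally I would optimize over $N$. Balancing the two terms amounts to choosing $N$ so that $2^{N(\varrho-m)}\sim B/A$; substituting this value makes each term equal to $A^{1-\theta}B^{\theta}$, which is exactly the claimed right-hand side. The only genuine subtlety, and the step I expect to require a word of care, is that $N$ must be an integer whereas the optimal balance point $N_{*}=\log_{2}(B/A)/(\varrho-m)$ generally is not; taking $N=\lfloor N_{*}\rfloor$ alters each exponential by at most a factor $2^{\pm(\varrho-m)}$, so the loss is an absolute multiplicative constant and is harmless for a $\lesssim$ estimate. The degenerate cases $A=0$ or $B=0$ force $f\equiv 0$ modulo polynomials and are trivial, and the case $m>\varrho$ follows by interchanging the roles of the two norms.
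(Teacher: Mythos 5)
Your proof is correct. Note that the paper itself gives no proof of this lemma: it is stated in the Appendix with only a pointer to the reference \cite{SS} (Sohinger--Strain), so there is no in-paper argument to compare against; your route --- Bernstein's inequality (Lemma \ref{lem2.1}) to reduce the $L^p$ blocks to $L^r$ blocks, then the two trivial bounds $\|\dot{\Delta}_{q}f\|_{L^r}\leq 2^{-qm}A$, $\|\dot{\Delta}_{q}f\|_{L^r}\leq 2^{-q\varrho}B$, a split of the dyadic sum at $N$, and optimization of $N$ --- is exactly the standard (and essentially the cited reference's) argument, with all the details handled properly: the condition $0<\theta<1$ with $m\neq\varrho$ guarantees $\kappa$ lies strictly between $m$ and $\varrho$ so both geometric series converge, and the integer-rounding of the balance point and the degenerate cases $A=0$ or $B=0$ are correctly dispatched.
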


\begin{lem}\label{lem8.4}
Suppose that $m\neq\varrho$. One has the interpolation inequality of Gagliardo-Nirenberg-Sobolev type
\begin{eqnarray}
\|\Lambda^{k}f\|_{L^{q}}\lesssim \|\Lambda^{m}f\|^{1-\theta}_{L^{r}}\|\Lambda^{\varrho}f\|^{\theta}_{L^{r}}, \label{R-E123}
\end{eqnarray}
where $0\leq\theta\leq1$,  $1\leq r\leq q\leq\infty$ and $$k+n\Big(\frac{1}{r}-\frac{1}{q}\Big)=m(1-\theta)+\varrho\theta.$$
\end{lem}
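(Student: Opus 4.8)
The plan is to obtain the Lebesgue interpolation (\ref{R-E123}) as a direct corollary of the homogeneous Besov interpolation already recorded in Lemma \ref{lem8.3}, by inserting Besov norms of the appropriate summability exponents between the Lebesgue norms. The decisive structural feature is that Lemma \ref{lem8.3} carries the strong index $\ell^{1}$ on the left target norm $\dot{B}^{k}_{p,1}$ and the weak index $\ell^{\infty}$ on the two right data norms $\dot{B}^{\cdot}_{r,\infty}$; this is exactly compatible with the one-sided Lebesgue embeddings $\dot{B}^{0}_{q,1}\hookrightarrow L^{q}$ and $L^{r}\hookrightarrow\dot{B}^{0}_{r,\infty}$ from Lemma \ref{lem2.2}(1). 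I therefore only need to thread these embeddings together with the homogeneity relation $\|\Lambda^{\alpha}f\|_{\dot{B}^{s}_{p,r}}\approx\|f\|_{\dot{B}^{s+\alpha}_{p,r}}$, the stated consequence of Lemma \ref{lem2.1}.

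First I would bound the left-hand side, for the interior range $0<\theta<1$, by $\|\Lambda^{k}f\|_{L^{q}}\lesssim\|\Lambda^{k}f\|_{\dot{B}^{0}_{q,1}}\approx\|f\|_{\dot{B}^{k}_{q,1}}$, using $\dot{B}^{0}_{q,1}\hookrightarrow L^{q}$ and the homogeneity relation with $\alpha=k$. Next I would invoke Lemma \ref{lem8.3} with $p:=q$: its hypotheses $m\neq\varrho$, $1\leq r\leq q\leq\infty$, and the dimensional balance $k+n(1/r-1/q)=m(1-\theta)+\varrho\theta$ are precisely those assumed in (\ref{R-E123}), so it yields $\|f\|_{\dot{B}^{k}_{q,1}}\lesssim\|f\|^{1-\theta}_{\dot{B}^{m}_{r,\infty}}\|f\|^{\theta}_{\dot{B}^{\varrho}_{r,\infty}}$. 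Finally I would convert the two data norms back to Lebesgue norms through $L^{r}\hookrightarrow\dot{B}^{0}_{r,\infty}$ and the homogeneity relation, giving $\|f\|_{\dot{B}^{m}_{r,\infty}}\approx\|\Lambda^{m}f\|_{\dot{B}^{0}_{r,\infty}}\lesssim\|\Lambda^{m}f\|_{L^{r}}$ and the analogous bound with $\varrho$. Chaining the three displays delivers (\ref{R-E123}).

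Should one prefer a self-contained argument avoiding Lemma \ref{lem8.3}, the same inequality follows directly from a Littlewood--Paley split $\Lambda^{k}f=\sum_{j}\dot{\Delta}_{j}\Lambda^{k}f$: Bernstein's inequality (Lemma \ref{lem2.1}) gives $\|\dot{\Delta}_{j}\Lambda^{k}f\|_{L^{q}}\lesssim 2^{j(k+n(1/r-1/q))}\min\{2^{-jm}\|\Lambda^{m}f\|_{L^{r}},\,2^{-j\varrho}\|\Lambda^{\varrho}f\|_{L^{r}}\}$, and summing this two-sided geometric series, with the crossover frequency chosen so that the two bounds coincide, reproduces the product $\|\Lambda^{m}f\|_{L^{r}}^{1-\theta}\|\Lambda^{\varrho}f\|_{L^{r}}^{\theta}$ precisely when $0<\theta<1$.

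No step presents a genuine difficulty; the only point demanding care, and hence the main (minor) obstacle, is the endpoint cases $\theta\in\{0,1\}$ admitted by the statement but excluded from Lemma \ref{lem8.3} (and from the geometric summation, which then loses convergence on one side). There (\ref{R-E123}) degenerates into the pure Sobolev embedding $\|\Lambda^{k}f\|_{L^{q}}\lesssim\|\Lambda^{m}f\|_{L^{r}}$ under $k+n(1/r-1/q)=m$, a classical estimate; since every application of this lemma in Sections \ref{sec:3}--\ref{sec:4} uses a strictly interior exponent $\theta\in(0,1)$, these endpoints may be treated separately or simply omitted without affecting the sequel.
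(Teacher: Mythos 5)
Your proposal is correct, but a comparison with "the paper's proof" is moot here: the paper does not prove Lemma \ref{lem8.4} at all --- the Appendix merely lists it (together with Lemmas \ref{lem8.1}--\ref{lem8.3} and \ref{lem8.5}) as a simplified variant of inequalities taken from the Sohinger--Strain work \cite{SS}. Measured against that, your argument is a legitimate self-contained derivation, and the route you choose --- sandwiching Lemma \ref{lem8.3} with $p:=q$ between the embeddings $\dot{B}^{0}_{q,1}\hookrightarrow L^{q}$ and $L^{r}\hookrightarrow\dot{B}^{0}_{r,\infty}$ of Lemma \ref{lem2.2}(1), using the homogeneity relation $\|\Lambda^{\alpha}f\|_{\dot{B}^{s}_{p,r}}\approx\|f\|_{\dot{B}^{s+\alpha}_{p,r}}$ to shift indices --- is exactly the natural in-paper mechanism, since the mismatch of summability indices ($\ell^{1}$ on the left, $\ell^{\infty}$ on the right) in Lemma \ref{lem8.3} is precisely what makes both embeddings point the right way. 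Your alternative Littlewood--Paley argument (Bernstein on each block, two-sided geometric sum, crossover where $2^{-jm}\|\Lambda^{m}f\|_{L^{r}}=2^{-j\varrho}\|\Lambda^{\varrho}f\|_{L^{r}}$) is also correct and has the advantage of not presupposing Lemma \ref{lem8.3}, whose proof the paper likewise omits. Finally, the endpoint caveat you raise is genuine and worth stating even more strongly: for $\theta\in\{0,1\}$ the inequality as written is not merely outside the scope of Lemma \ref{lem8.3} but can actually fail in the admitted range $1\leq r\leq q\leq\infty$ (e.g.\ $\theta=0$, $r=1$, $k<m$ reduces to boundedness of a Riesz potential from $L^{1}$ to $L^{q}$, which only holds in weak type), so restricting to $0<\theta<1$ plus the trivial case $k=m=\varrho$, $q=r$ is not a cosmetic simplification but a necessary one; as you observe, every invocation in Sections \ref{sec:3}--\ref{sec:4} (e.g.\ (\ref{R-E61}) and (\ref{R-E64})) falls in this safe range.
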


\begin{lem}\label{lem8.5}
Suppose that $\varrho>0$ and $1\leq p<2$. It holds that
\begin{eqnarray}
\|f\|_{\dot{B}^{-\varrho}_{r,\infty}}\lesssim \|f\|_{L^{p}} \label{R-E124}
\end{eqnarray}
with $1/p-1/r=\varrho/n$. In particular, this holds with $\varrho=n/2, r=2$ and $p=1$.
\end{lem}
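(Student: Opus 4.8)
The plan is to reduce the estimate to a single dyadic block and then invoke the Bernstein inequality from Lemma~\ref{lem2.1}. By Definition~\ref{defn3.1}, the left-hand side is
$$\|f\|_{\dot{B}^{-\varrho}_{r,\infty}}=\sup_{q\in\mathbb{Z}}2^{-q\varrho}\|\dot{\Delta}_{q}f\|_{L^{r}},$$
so it suffices to prove $2^{-q\varrho}\|\dot{\Delta}_{q}f\|_{L^{r}}\lesssim\|f\|_{L^{p}}$ with a constant independent of $q$. I would first record that the hypothesis $1/p-1/r=\varrho/n>0$ forces $p<r$, so the ordering $1\leq p\leq r\leq\infty$ required by Bernstein is indeed available.

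Next I would apply Lemma~\ref{lem2.1}(i) to the block $\dot{\Delta}_{q}f$, whose Fourier transform is supported in the annulus $A_{q}\subset\{|\xi|\lesssim 2^{q}\}$. Taking $\alpha=0$, $a=p$, $b=r$ and $\lambda=2^{q}$ gives
$$\|\dot{\Delta}_{q}f\|_{L^{r}}\lesssim 2^{qn(\frac{1}{p}-\frac{1}{r})}\|\dot{\Delta}_{q}f\|_{L^{p}}=2^{q\varrho}\|\dot{\Delta}_{q}f\|_{L^{p}},$$
where the last equality uses $n(1/p-1/r)=\varrho$. For the remaining $L^{p}$ factor I would write $\dot{\Delta}_{q}f=\Phi_{q}\ast f$ and apply Young's convolution inequality together with the scaling identity $\Phi_{q}(x)=2^{qn}\Phi_{0}(2^{q}x)$, which yields $\|\Phi_{q}\|_{L^{1}}=\|\Phi_{0}\|_{L^{1}}$ for every $q$; hence $\|\dot{\Delta}_{q}f\|_{L^{p}}\leq\|\Phi_{0}\|_{L^{1}}\|f\|_{L^{p}}$ uniformly in $q$.

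Combining the two displays gives $2^{-q\varrho}\|\dot{\Delta}_{q}f\|_{L^{r}}\lesssim\|f\|_{L^{p}}$ uniformly in $q\in\mathbb{Z}$, and taking the supremum over $q$ produces (\ref{R-E124}). The stated particular case follows by setting $\varrho=n/2$, $r=2$, $p=1$ and noting that $1/1-1/2=1/2=(n/2)/n$. The only point demanding a little care is the uniformity of the convolution constant across all dyadic scales, which is precisely where the scale invariance of $\|\Phi_{q}\|_{L^{1}}$ enters; beyond that the argument is a direct application of Bernstein's inequality, so I anticipate no genuine obstacle.
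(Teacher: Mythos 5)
Your proof is correct and complete: Bernstein's inequality (Lemma~\ref{lem2.1}(i)) on each annulus gives the factor $2^{qn(\frac{1}{p}-\frac{1}{r})}=2^{q\varrho}$, Young's convolution inequality with the scale-invariant bound $\|\Phi_{q}\|_{L^{1}}=\|\Phi_{0}\|_{L^{1}}$ gives uniformity in $q$, and taking the supremum yields (\ref{R-E124}). Note that the paper itself offers no proof of Lemma~\ref{lem8.5} --- the appendix merely lists it, deferring to the reference [SS] --- and your Bernstein-plus-Young argument is precisely the standard, self-contained proof of this embedding, so there is nothing to fault and nothing genuinely different to compare against.
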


\section*{Acknowledgments}
The authors would like to thank Prof. R.-J Duan for his
warm communication. The first author (J. Xu) is partially supported by the Program for New Century Excellent
Talents in University (NCET-13-0857), Special Foundation of China Postdoctoral
Science Foundation (2012T50466) and the NUAA Fundamental
Research Funds (NS2013076). He would like to thank Prof. S. Kawashima
for giving him an opportunity to work at Kyushu University in Japan. The work is also partially supported by
Grant-in-Aid for Scientific Researches (S) 25220702 and (A) 22244009.

\end{document}